\theoremstyle{plain} 
\newtheorem{theorem}{Theorem}[section]
\newtheorem{proposition}{Proposition}[section]
\newtheorem{lemma}[proposition]{Lemma}
\newtheorem{corollary}[proposition]{Corollary}
\theoremstyle{plain}
\newtheorem{definition}[proposition]{Definition}  
\newtheorem{remark}[proposition]{Remark}  
\newtheorem{example}[proposition]{Example} 
\newtheorem{assumption}[proposition]{Assumption}  
\theoremstyle{nonumberplain}
\newtheorem{proof}{Proof}
\definecolor{refkey}{rgb}{0,1,0} 
\definecolor{labelkey}{rgb}{0,1,1}
\DeclareMathAlphabet{\mathpzc}{OT1}{pzc}{m}{it}
\newcommand\redout{\bgroup\markoverwith
{\textcolor{red}{\rule[.5ex]{2pt}{0.75pt}}}\ULon}
\newcommand\soutmath[2][red]{\setbox0=\hbox{$\,#2\,$}%
\rlap{\raisebox{.325\ht0}{\textcolor{#1}{\rule{\wd0}{.75pt}}}}\textcolor{red}{\,#2\,}} 
\newcommand\souteq[2][red]{\setbox0=\hbox{$#2$}%
$$\rlap{\raisebox{.325\ht0}{\textcolor{#1}{\rule{\wd0}{.75pt}}}}\textcolor{red}{\text{$#2$}}$$}
\numberwithin{equation}{section}
\newcommand{\mbbE}{\mathbb{E}}
\newcommand{\mbbM}{\mathbb{M}}
\newcommand{\mbbN}{\mathbb{N}}
\newcommand{\mbbR}{\mathbb{R}}
\newcommand{\mbbU}{\mathbb{U}}
\newcommand{\mbbV}{\mathbb{V}}
\newcommand{\mbbW}{\mathbb{W}}
\newcommand{\mbbX}{\mathbb{X}}
\newcommand{\mbbY}{\mathbb{Y}}
\newcommand{\mbbZ}{\mathbb{Z}}
\newcommand{\Image}{\operatorname{Im}}
\newcommand{\Kernel}{\operatorname{Ker}}
\newcommand{\osc}{\operatorname{osc}}
\renewcommand{\ker}{\Kernel}
\newcommand{\CAO}{C_{\text{\tiny$\mathrm{AO}$}}}
\newcommand{\CBM}{C_{\text{\tiny$\mathrm{BM}$}}}
\newcommand{\dBM}{d_{\text{\tiny$\mathrm{BM}$}}}
\newcommand{\sign}{\operatorname{sign}}
\definecolor{darkgray}{gray}{0.4}  
\definecolor{ddarkgray}{gray}{0.2} 
\definecolor{redgray}{rgb}{0.5,0.25,0.25}  
\definecolor{bluegray}{rgb}{0.25,0.25,0.5}  
\renewcommand*{\thefootnote}{\fnsymbol{footnote}}
\title{\Large \textsc{%
Discretization of Linear Problems in \\
Banach Spaces: Residual Minimization, \\
Nonlinear Petrov--Galerkin, and \\
Monotone Mixed Methods%
}%
%
%
%
%
%
%
}
\author{\large \textbf{I.~Muga\footnotemark[2]\, $\boldsymbol{\,\cdot\,}$ K.G. van der Zee\footnotemark[3]}
\\
{\footnotesize {5$^\mathrm{th}$ June, 2018}}}
\date{\vspace*{-1.75\baselineskip}}
\newlength{\bigfboxsep}
\newcommand{\bigfbox}[1]{\setlength{\fboxsep}{\bigfboxsep}\fbox{#1}}
\newcommand{\negquad}{\mspace{-18.0mu}}
\newcommand{\negqquad}{\negquad\negquad}
\newcommand{\norm}[1]{{\|#1\|}}
\newcommand{\bignorm}[1]{\big\|#1\big\|}
\newcommand{\Bignorm}[1]{\Big\|#1\Big\|}
\newcommand{\innerprod}[1]{(#1)}
\newcommand{\dual}[1]{\langle#1\rangle}
\newcommand{\bigdual}[1]{\big\langle#1\big\rangle}
\newcommand{\realset}{\mathbb{R}} 
\newcommand{\nozero}{\setminus\{0\}}
\newcommand{\ds}[1]{{\displaystyle{#1}}}
\newcommand{\mbb}[1]{\mathbb{#1}}
\newcommand{\mcal}[1]{\mathcal{#1}}
\newcommand{\mcJ}{\mcal{J}}
\newcommand{\mcL}{\mcal{L}}
\newcommand{\<}{\left<}
\renewcommand{\>}{\right>}
\newcommand{\rr}{\realset}
\begin{document}
%
%
\maketitle
%
%
\footnotetext[2]{Pontificia Univ.~Cat\'olica de Valpara\'iso, Instituto de Mathem\'aticas, \mbox{ignacio.muga@pucv.cl}}
\footnotetext[3]{University of Nottingham, School of Mathematical Sciences, \mbox{kg.vanderzee@nottingham.ac.uk}}
\renewcommand{\thefootnote}{\arabic{footnote}}
%
%
\begin{abstract}
\noindent
%
This work presents a comprehensive discretization theory for abstract linear operator equations in \emph{Banach} spaces. The fundamental starting point of the theory is the idea of residual minimization in dual norms, and its inexact version using \emph{discrete} dual norms. It is shown that this development, in the case of strictly-convex reflexive Banach spaces with strictly-convex dual, gives rise to a class of \emph{nonlinear} Petrov--Galerkin methods and, equivalently, abstract mixed methods with \emph{monotone nonlinearity}.
Crucial in the formulation of these methods is the (nonlinear) bijective \emph{duality map}. 
%
\par
Under the \emph{Fortin} condition, we prove discrete stability of the abstract inexact method, and subsequently carry out a complete error analysis. As part of our analysis, we prove new bounds for best-approximation projectors, which involve constants depending on the geometry of the underlying Banach space. 
%
The theory generalizes and extends the classical Petrov--Galerkin method as well as existing residual-minimization approaches, such as the discontinuous Petrov--Galerkin method.

\vspace*{.25\baselineskip}
\end{abstract}
{\footnotesize
\parbox[t]{\textwidth}{%
\textbf{Keywords~} 
Operators in Banach spaces 
$\cdot$~Residual minimization 
$\cdot$~Petrov--Galerkin discretization 
$\cdot$~Error analysis
$\cdot$~Quasi-optimality 
$\cdot$~Duality mapping 
$\cdot$~Best approximation 
$\cdot$~Geometric constants 
\\[.5\baselineskip]
\textbf{Mathematics Subject Classification~}
41A65 
$\cdot$~65J05 
$\cdot$~46B20 
$\cdot$~65N12 
$\cdot$~65N15 
}
}
%
\newpage
\setcounter{secnumdepth}{3}
\setcounter{tocdepth}{3}
{
\small
\tableofcontents
}
%
%
\pagestyle{myheadings}
\thispagestyle{plain}
\markboth{\small \textit{I.~Muga and K.G.~van der Zee}}{\small \textit{Discretization in Banach Spaces}}
%
\section{Introduction}
\label{sec:intro}
%
%
In the setting of \emph{Banach} spaces, we consider the abstract problem
\begin{empheq}[left=\left\{\,,right=\right.,box=]{alignat=2}
\notag & \text{Find }  u\in \mbbU : 
\\
\label{eq:Bu=f}
&  \quad B u = f \qquad \text{in } \mbbV^*\,,
\end{empheq}
where $\mbbU$ and $\mbbV$ are infinite-dimensional Banach spaces and the data $f$~is a given element in the dual space~$\mbbV^*$. The operator $B:\mbbU\rightarrow \mbbV^*$ is a continuous, bounded-below linear operator, that is, there is a continuity constant~$M_B>0$ and bounded-below constant~$\gamma_B>0$ such that
\begin{alignat}{2}
\label{eq:normEquiv}
  \gamma_B \norm{w}_\mbbU \le \norm{B w}_{\mbbV^*} \le M_B \norm{w}_\mbbU
  \,, \qquad \forall w\in \mbbU \,.\footnotemark
\end{alignat}
\footnotetext{The smallest possible constant~$M_B$ coincides with~$\ds{\norm{B}:= \sup_{w\in \mbbU\setminus \{0\}} \norm{Bw}_{\mbbV^*} /\norm{w}_\mbbU}$, while the largest possible constant~$\gamma_B$ coincides with~$1/\norm{B^{-1}}$, where~$B^{-1}:\Image (B) \rightarrow \mbbU$\,.}%
%
%
\par
Problem~\eqref{eq:Bu=f} is equivalent to the variational statement
\begin{alignat*}{2}
  \bigdual{ B u,v }_{\mbbV^*,\mbbV} = \bigdual{ f , v }_{\mbbV^*,\mbbV} \qquad \forall v\in \mbbV\,,
\end{alignat*}
commonly encountered in
the weak formulation of partial differential equations (PDEs), i.e.,  when 
$\bigdual{ B u,v }_{\mbbV^*,\mbbV} := b(u,v)$ and $b:\mbbU\times \mbbV\to \rr$ is a bilinear form. Note that the above Banach-space setting allows the consideration of PDEs in non-standard (non-Hilbert) settings suitable for dealing with rough data (e.g., measure-valued sources) and irregular solutions (e.g., in $W^{1,p}$, $p\neq 2$, or in~$BV$).

\par
%

%
%
A central problem in numerical analysis is \emph{to devise a discretization method that}, for a given family~$\{\mbbU_n\}_{n\in\mbbN}$ of discrete (finite-dimensional) subspaces $\mbbU_n\subset \mbbU$, 
\emph{is guaranteed to provide a near-best approximation}~$u_n\in \mbbU_n$ to the solution~$u\in\mbbU$ of~\eqref{eq:Bu=f}. This means that, for some constant~$C\ge 1$ independent of~$n$, the approximation~$u_n$ satisfies the error bound
\begin{alignat}{2}
  \label{eq:qOpt}
  \norm{u-u_n}_\mbbU \le C 
  \inf_{w_n\in \mbbU_n}
  \norm{ u -  w_n }_{\mbbU}\,.
\end{alignat}
%
%
%
A discretization method for which this is true, is said to be \emph{quasi-optimal}. 
\par
It is the purpose of this paper to propose and analyze a new quasi-optimal discretization method for the problem in~\eqref{eq:Bu=f} that generalizes and improves upon existing methods.
%
\subsection{Petrov--Galerkin discretization and residual minimization}
A standard method for~\eqref{eq:Bu=f} is the \emph{Petrov--Galerkin} discretization:
\begin{empheq}[left=\left\{\,,right=\right.,box=]{alignat=2}
\notag 
 & \text{Find }  u_n\in \mbbU_n : 
\\ \label{eq:PGdisc}
 & \quad
  \bigdual{Bu_n,v_n}_{\mbbV^*,\mbbV} = \bigdual{f,v_n}_{\mbbV^*,\mbbV}
  \qquad \forall v_n \in \mbbV_n\,,
\end{empheq}
with $\mbbV_n\subset \mbbV$ a discrete subspace of the same dimension as~$\mbbU_n$. It is well-known however, that the fundamental difficulty of~\eqref{eq:PGdisc} is \emph{stability}: 
One must come up with a test space~$\mbbV_n$ that is \emph{precisely compatible} with~$\mbbU_n$ in the sense that they have the same dimension and the discrete inf--sup condition is satisfied; see, e.g.,~\cite[Chapter~2]{ErnGueBOOK2004} and~\cite[Chapter~10]{StaHolBOOK2011}. Incompatible pairs~$(\mbbU_n,\mbbV_n)$ may lead to spurious numerical artifacts and non-convergent approximations. 
%
%
\par
An alternative method, which is not common, is \emph{residual minimization}:
\begin{empheq}[left=\left\{\,,right=\right.,box=]{alignat=2}
\notag 
 & \text{Find }  u_n\in \mbbU_n : 
\\ \label{eq:introResMin}
 & \quad
  u_n = \arg \min_{w_n\in \mbbU_n} \bignorm{f-B w_n}_{\mbbV^*} \,,
\end{empheq}
where the dual norm is given by
\begin{alignat}{2}
\label{eq:dualNorm}
  \norm{ r }_{\mbbV^*} = \sup_{v\in \mbbV \nozero} \frac{\dual{r,v}_{\mbbV^*,\mbbV}}{\norm{v}_\mbbV}\,,
  \qquad \text{for any~} r\in \mbbV^*\,.
\end{alignat}
The residual-minimization method is appealing for its stability and quasi-optimality \emph{without} requiring additional conditions. This was proven by Guermond~\cite{GueSINUM2004}, who studied residual minimization abstractly in Banach spaces, and focussed on the case where the residual is in an $L^p$~space, for $1\le p < \infty$. 
%
%
If~$\mbbV$ is a Hilbert space, residual minimization corresponds to the familiar \emph{least-squares minimization} method~\cite{BocGunBOOK2009}; otherwise it requires the minimization of a convex (non-quadratic) functional. 
%
\subsection{Residual minimization in discrete dual norms}
Although residual minimization is a quasi-optimal method, an essential complication is that the dual norm~\eqref{eq:dualNorm} may be \emph{non-computable} in practice, because it requires a supremum over~$\mbbV$ that may be intractable. This is the case, for example, for the Sobolev space~$\mbbV= W_0^{1,p}(\Omega)$, with~$\Omega\subset \mathbb{R}^d$ a bounded $d$-dimensional domain, for which the dual is the negative Sobolev space $\mbbV^* = [W_0^{1,p}(\Omega)]^* =: W^{-1,q}(\Omega)$ (see, e.g.,~\cite{AdaFouBOOK2003}), where $p^{-1} + q^{-1} = 1$. 
Situations with non-computable dual norms are very common in weak formulations of~PDEs and, therefore, such complications can not be neglected. 
\par
A natural replacement that makes such dual norms computationally-tractable is obtained by restricting the supremum to \emph{discrete} subspaces~$\mbbV_m\subset \mbbV$. This idea leads to the following \emph{inexact residual minimization}:
\begin{empheq}[left=\left\{\,,right=\right.,box=]{alignat=2}
\notag 
 & \text{Find }  u_n\in \mbbU_n : 
\\ \label{eq:introInexactResMin}
 & \quad
   u_n = \arg \min_{w_n\in \mbbU_n} \bignorm{f-B w_n}_{(\mbbV_m)^*} \,, 
\end{empheq}
where the \emph{discrete} dual norm is now given by
\begin{alignat}{2}
\label{eq:dualInexactNorm}
  \norm{ r }_{(\mbbV_m)^*} = \sup_{v_m\in \mbbV_m \nozero} \frac{\dual{r,v_m}_{\mbbV^*,\mbbV}}{\norm{v_m}_\mbbV} \,, \qquad \text{for any~} r\in \mbbV^*\,.
\end{alignat}
Note that a notation with a separate parametrization~$(\cdot)_m$ is used to highlight the fact that~$\mbbV_m$ need not necessarily be related to~$\mbbU_n$. 
%
%
%
%
%
%
%
%
%
%
%
%
%
%
\subsection{Main results}
\label{sec:MainResult}
The main objective of this work is to present a comprehensive abstract analysis of the inexact residual-minimization method~\eqref{eq:introInexactResMin} in the setting of \emph{Banach} spaces. As part of our analysis, we also obtain new abstract results for the (exact) residual-minimization method~\eqref{eq:introResMin} when specifically applied in non-Hilbert spaces. We use the remainder of the introduction to announce the main results in this work and discuss their significance.
\par
Most of our results are valid in the case that~$\mbbV$ is a \emph{reflexive} Banach space such that $\mbbV$ and $\mbbV^*$ are \emph{strictly convex}%
\footnote{A normed space~$\mbbY$ is said to be \emph{strictly convex} if, for all $y_1,y_2\in \mbbY$ such that $y_1\neq y_2$ and $\norm{y_1}= \norm{y_2} = 1$, it holds that $\norm{\theta y_1 + (1-\theta)  y_2 }_\mbbY < 1 $ for all $\theta \in (0,1)$, see e.g., \cite{DeiBOOK1985, BreBOOK2011, CiaBOOK2013}.},
which we shall refer to as the \emph{reflexive smooth setting}. The reflexive smooth setting includes Hilbert spaces, but also important \emph{non}-Hilbert spaces, since $L^p(\Omega)$ (as well as $p$-Sobolev spaces) for $p \in (1,\infty)$ are reflexive and strictly convex, however not so for~$p=1$ and~$p=\infty$ (see~\cite[Chapter~II]{CioBOOK1990} and~\cite[Section~4.3]{BreBOOK2011}). We assume this special setting throughout the remainder of Section~\ref{sec:intro}. 
\par
Indispensable in our analysis is the \emph{duality mapping}, 
which is a well-studied operator in nonlinear functional analysis that can be thought of as the extension to Banach spaces of the well-known \emph{Riesz map} (which is a Hilbert-space construct). In the reflexive smooth setting, the duality mapping $J_\mbbV:\mbbV \rightarrow \mbbV^*$ is a bijective monotone operator that is nonlinear in the non-Hilbert case. To give a specific example, if~$\mbbV = W^{1,p}_0(\Omega)$ then $J_\mbbV$ is a (normalized) $p$-Laplace-type operator. We refer to Section~\ref{sec:duality_mapping} for details and other relevant properties.
%
\par
%
%
\subsubsection*{Main result I. Residual minimization: Equivalences and a~priori bounds}
%
\noindent
The first main result in this paper concerns~\emph{equivalent characterizations} of the solution to the exact residual-minimization method~\eqref{eq:introResMin}, see Theorem~\ref{thm:ref_smooth}, as well as novel \emph{a priori bounds}. 
In particular, the most important equivalence to~\eqref{eq:introResMin} is given by:
\begin{empheq}[left=\left\{\,,right=\right.,box=]{alignat=2}
\notag 
 & \text{Find }  u_n\in \mbbU_n : 
\\ \label{eq:introNPG}
 & \quad  \bigdual{\nu_n, J_{\mbbV}^{-1} (f- B u_n) }_{\mbbV^*,\mbbV} = 0
  \qquad \forall \nu_n \in B\mbbU_n \subset \mbbV^* \,.
\end{empheq}
which we refer to as a \emph{nonlinear Petrov--Galerkin} formulation. Note that statement~\eqref{eq:introNPG} is equal to
\begin{alignat}{2}
\label{eq:introNPG2}
  \bigdual{B w_n \,,\, J_{\mbbV}^{-1} (f- B u_n) }_{\mbbV^*,\mbbV} = 0
  \qquad \forall w_n\in \mbbU_n\,.
\end{alignat}
In other words, \emph{the residual minimizer~$u_n\in\mbbU_n$ of~\eqref{eq:introResMin} can be obtained by solving the nonlinear Petrov--Galerkin problem~\eqref{eq:introNPG} for~$u_n\in \mbbU_n$, and vice versa}. Owing to the equivalence, the known stability and quasi-optimality results for residual minimization~\eqref{eq:introResMin} transfer to the nonlinear Petrov--Galerkin discretization~\eqref{eq:introNPG}. Let us point out that the non-computable supremum norm in residual minimization translates into a non-tractable duality-map \emph{inverse}~$J_\mbbV^{-1}$ in \eqref{eq:introNPG}. 
\par
By introducing the auxiliary variable~$r = J_{\mbbV}^{-1} \big(f- B u_n\big) \in \mbbV$ (a residual representer), one arrives at a semi-infinite mixed formulation with monotone nonlinearity, for simplicity referred to as a \emph{monotone mixed formulation}:
%
\begin{subequations}
\label{eq:introMMF}
\begin{empheq}[left=\left\{\;,right=\right.,box=]{alignat=3} 
\notag 
& \text{Find } (r,u_n)\in \mbbV \times \mbbU_n: 
  \negquad \negquad \negquad \negquad \negquad 
\\ \label{eq:introMMFa}
& \quad J_\mbbV(r) + Bu_n &&= f \qquad && \text{in } \mbbV^* \,,
\\ \label{eq:introMMFb}
& \quad \<B w_n, r\>_{\mbbV^{\ast},\mbbV} 
&& =0  && \forall w_n\in \mbbU_n\,.
\end{empheq}
\end{subequations}
This formulation, in turn, is equivalent to a \emph{constrained-minimization formulation} (i.e., a semi-infinite saddle-point problem) involving the Lagrangian $(v,w_n)\mapsto \tfrac{1}{2}\norm{v}_\mbbV^2 - \dual{f,v}_{\mbbV^*,\mbbV} + \dual{B w_n,v} : \mbbV \times \mbbU_n \rightarrow \mathbb{R}\,$. See Section~\ref{sec:NPGMMM} for details.
\par
In the setting of \emph{Hilbert} spaces, $J_\mbbV$ coincides with the Riesz map $R_\mbbV:\mbbV \rightarrow \mbbV^*$, and~\eqref{eq:introNPG2} reduces to (recall $R_\mbbV^{-1}$ is self-adjoint):
\begin{alignat}{2}
\label{eq:introOptPG}
  \bigdual{ f- B u_n\,,\, R_\mbbV^{-1} B w_n }_{\mbbV^*,\mbbV} = 0
  \qquad \forall w_n\in \mbbU_n\,.
\end{alignat}%
This coincides with a Petrov--Galerkin method~\eqref{eq:PGdisc} with the \emph{optimal}, but intractable, test space~$\mbbV_n = R_\mbbV^{-1} B \mbbU_n$. Methods that aim to approximately compute this optimal~$\mbbV_n$ have received renewed interest since~2010, starting from a pioneering sequence of papers by Demkowicz \& Gopalakrishnan on the so-called \emph{discontinuous Petrov--Galerkin} (DPG) method; see, e.g.,~\cite{DemGopCMAME2010, DemGopNMPDE2011} and the overviews in~\cite{DemGopBOOK-CH2014, GopARXIV2014}. 
In the Hilbert-space setting, the connection between~\eqref{eq:introOptPG} and residual minimization was clarified first in~\cite{DemGopNMPDE2011}, while the connection with the mixed formulation~\eqref{eq:introMMF} (with $R_\mbbV$ instead of~$J_\mbbV$) was obtained by Dahmen et al~\cite{DahHuaSchWelSINUM2012}. A connection with the \emph{variational multiscale} framework has also been made~\cite{CohDahWelM2AN2012, ChaEvaQiuCAMWA2014}. 
%
%
%
\par
In our brief review of residual minimization, we rely on theory of best approximation in Banach spaces; see Theorem~\ref{thm:resMin}.%
\footnote{For an alternative approach to the analysis of residual minimization, see Guermond~\cite[Theorem~2.1]{GueSINUM2004}.}
Within this context, we prove two novel a~priori bounds for abstract \emph{best approximations} (hence also for residual minimizers). While the classical statement, $\norm{y_0}_\mbbY \le \widetilde{C} \norm{y}_\mbbY$ with~$\widetilde{C}=2$, is valid for best approximations~$y_0$ to~$y$ within \emph{any} Banach space~$\mbbY$, this result can be sharpened in special Banach spaces. In our first improvement (see Proposition~\ref{prop:ba_apriori_bound}), we prove that the constant~$\widetilde{C}$ can be taken as the \emph{Banach--Mazur constant}~$\CBM(\mbbY)\in [1,2]$ of the underlying Banach space~$\mbbY$. The Banach--Mazur constant is an example of a so-called \emph{geometrical constant} that quantifies how ``close'' a Banach space is to being Hilbert, and this particular geometrical constant was recently introduced by Stern~\cite{SteNM2015} to sharpen the a~priori \emph{error} estimate for the Petrov--Galerkin discretization~\eqref{eq:PGdisc}. 
In our second improvement (see Proposition~\ref{prop:improvedApriori}), we prove that~$\widetilde{C}$ can also be taken as~\mbox{$1{+}\CAO(\mbbY)$}, where $\CAO(\mbbY) \in [0,1]$ is a newly introduced constant that we refer to as the \emph{asymmetric-orthogonality constant} 
of~$\mbbY$. 
%
%
%
%
%
\subsubsection*{Main result II. Inexact method: Stability and quasi-optimality}
\noindent
The second main result in this paper is the complete analysis of the \emph{inexact} residual minimization method~\eqref{eq:introInexactResMin}. To carry out the analysis, we first present equivalent characterizations of the solution of the inexact method,
see Theorem~\ref{teo:discrete_residual}. These characterizations are the \emph{fully discrete} versions of the above-mentioned characterizations for exact residual minimization. In particular, the \emph{inexact} nonlinear Petrov--Galerkin method corresponds to~\eqref{eq:introNPG} with~$J_\mbbV^{-1}$ replaced by~$I_m J_{\mbbV_m}^{-1} \circ I_m^*$, where $J_{\mbbV_m}^{-1}$ is the inverse of the duality map in~$\mbbV_m$, and $I_m : \mbbV_m \rightarrow \mbbV$ is the natural injection. The computationally most-insightful equivalence is the one corresponding to the mixed formulation~\eqref{eq:introMMF}, and 
we refer to the resulting discretization as a \emph{monotone mixed method}:
\begin{subequations}
\label{eq:introMMM}
\begin{empheq}[left=\left\{\;,right=\right.,box=]{alignat=3} 
\notag
& \text{Find } (r_m,u_n)\in \mbbV_m \times \mbbU_n :
\\ \label{eq:introMMMa}
& \quad \bigdual{ J_\mbbV(r_m),v_m }_{\mbbV^{\ast},\mbbV}
+\bigdual{ Bu_n,v_m }_{\mbbV^{\ast},\mbbV} 
&&=\bigdual{f,v_m}_{\mbbV^{\ast},\mbbV} 
 & \qquad  & \forall v_m\in \mbbV_m\,,
\\ \label{eq:introMMMb}
& \quad \bigdual{B^*r_m,w_n}_{\mbbU^{\ast},\mbbU} 
&& =0  && \forall w_n\in \mbbU_n\,,
\end{empheq}
\end{subequations}
where the auxiliary variable~$r_m$ is a \emph{discrete} residual representer.
\par
In the analysis of the stability and quasi-optimality of the inexact method, some compatibility is demanded on the pair~$(\mbbU_n,\mbbV_m)$. This compatibility is stated in terms of~\emph{Fortin's condition} (involving a Fortin operator~$\Pi:\mbbV\rightarrow \mbbV_m$, see Assumption~\ref{assumpt:Fortin}), which is essentially a discrete inf--sup condition on~$(\mbbU_n,\mbbV_m)$. (Note that this compatibility requirement is less stringent than the one required for the Petrov--Galerkin discretization since the dimensions of~$\mbbU_n$ and $\mbbV_m$ can be distinct.) Under Fortin's condition, we prove the unique existence of the pair~$(r_m,u_n)\in
\mbbV_m\times\mbbU_n$ solving~\eqref{eq:introMMM} and its continuous dependence on the data, see Theorem~\ref{thm:discrete}. Then, we prove a corresponding \emph{a~posteriori} error estimate, see Theorem~\ref{thm:aposteriori}, the result of which happens to coincide with the result in the Hilbert case~\cite{CarDemGopSINUM2014, GopARXIV2014}. A straightforward consequence (see Corollary~\ref{cor:aPrioriErrorEst}) is then the quasi-optimal error estimate~\eqref{eq:qOpt} with
\begin{alignat}{2}
\label{eq:CqOpt1}
  C = \frac{(D_\Pi + C_\Pi) M_B}{\gamma_B}\,, 
\end{alignat}
where $D_\Pi$ and $C_\Pi$ are boundedness constants appearing in Fortin's condition, and $M_B$ and $\gamma_B$ are the continuity and bounded-below constants of~$B$. 
\par
The importance of Fortin's condition in the analysis of the inexact method was recognized first by Gopalakrishnan and Qiu~\cite{GopQiuMOC2014}, who studied the inexact optimal Petrov--Galerkin method in Hilbert spaces. In this setting, Fortin's condition implies that $R_{\mbbV_m}^{-1} B \mbbU_n$ is a \emph{near-optimal} test space that is \emph{sufficiently close} to the optimal one~$R_\mbbV^{-1} B \mbbU_n$ (cf.~\cite[Proposition~2.5]{BroSteCAMWA2014}). The fact that near-optimal test spaces imply quasi-optimality was established by Dahmen et al.; see \cite[Section~3]{DahHuaSchWelSINUM2012}. Let us point out however, that the concept of an optimal test space is completely absent in our Banach-space theory, and seems to apply to Hilbert spaces only.
%
%
%
%
\par
Although the result in~\eqref{eq:CqOpt1} demonstrates quasi-optimality for the inexact method in Banach-space settings, the constant in~\eqref{eq:CqOpt1} does not reduce to the known result, $C = C_\Pi M_B / \gamma_B$, when restricting to Hilbert-space settings~\cite[Theorem~2.1]{GopQiuMOC2014}. To resolve the discrepancy, we improve the constant in~\eqref{eq:CqOpt1} by including the dependence on the geometry of the involved Banach spaces; see Theorem~\ref{thm:discrete_apriori}. The proof of this sharper estimate is nontrivial, as it requires a suitable extension of a Hilbert-space technique due to Xu and Zikatanov~\cite{XuZikNM2003} involving the classical identity $\norm{I-P} = \norm{P}$ for Hilbert-space projectors~$P$, which is generally attributed to Kato~\cite{KatNM1960} (cf.~\cite{SzyNALG2006}). A key idea is the recent extension $\norm{I-P} \le C_\mathrm{S} \norm{P}$ for Banach-space projectors by Stern~\cite{SteNM2015}, where~$C_\mathrm{S}$ depends on the Banach--Mazur constant, however, since that extension applies to linear projectors, we generalize Stern's result to a suitable class of nonlinear projectors (see Lemma~\ref{lem:I-P}). 
Combined with an a~priori bound for the inexact residual minimizer involving the asymmetric-orthogonality constant, we then prove that the constant in the quasi-optimal error estimate~\eqref{eq:CqOpt1} can be improved to 
\begin{alignat*}{2}
 C = \min\left\{ 
  \frac{C_\Pi}{\gamma_B}\,\big(1+\CAO(\mbbV)\big)\,M_B\,\CBM(\mbbU) \,,\,1+ \frac{C_\Pi }{\gamma_B}\big(1+\CAO(\mbbV)\big) M_B  
  \right\}\,,
\end{alignat*}
which is consistent with the Hilbert-space result $C=C_\Pi M_B/\gamma_B$ since in that case $\CAO(\mbbV) = 0$ and $\CBM(\mbbU) = 1$.
%
%
\subsection{Discussion: Unifying aspects}
%
Let us emphasize that the above quasi-optimality theory for the inexact method generalizes existing theories for other methods that are in some sense contained within the inexact method, and therefore it provides a unification of these theories. In particular, the theory generalizes Babu\v{s}ka's theory for Petrov--Galerkin methods~\cite{BabNM1971}, Guermond's theory for exact residual minimization~\cite{GueSINUM2004}, and the Hilbert-space theory for inexact residual minimization (including the DPG method)~\cite{GopQiuMOC2014, DahHuaSchWelSINUM2012}. For a schematic hierarchy with these connections and its detailed discussion, we refer to Section~\ref{sec:connections}.
\subsection{Outline of paper}
\newcommand{\IgnacioBullet}{--~}
The remainder of the paper is organized as follows.
\\
\IgnacioBullet
Section~\ref{sec:resMin} is devoted to necessary preliminaries on the duality mapping and abstract theory of best approximation in Banach spaces.
\\
\IgnacioBullet
Section~\ref{sec:NPGMMM} is dedicated to residual minimization and its characterization via the duality mapping giving rise to several equivalences, including what is referred to as the \emph{nonlinear Petrov-Galerkin method} and \emph{monotone mixed formulation}.
\\
\IgnacioBullet
Section~\ref{sec:practical} analyzes the tractable inexact method. We establish the equivalence of the \emph{inexact residual minimization}, \emph{inexact nonlinear Petrov--Galerkin} and \emph{monotone mixed method}. We then study the stability of this method and perform a comprehensive error analysis.
\\
\IgnacioBullet
Finally, Section~\ref{sec:connections} reviews the connections to other existing methods (standard Petrov--Galerkin, exact residual minimization and the inexact method in Hilbert-space settings),
and points out how the presented quasi-optimality analysis applies in each situation.
\section{Preliminaries: Duality mappings and best approximation} 
\label{sec:resMin}
In this section we briefly review some relevant theory in the classical subject of duality mappings, and elementary results from best approximation theory in Banach spaces. These preliminaries are required for our analysis of (inexact) residual-minimization problems. 
%
\subsection{The duality mapping}
\label{sec:duality_mapping}
An extensive treatment on duality mappings can be found in Cioranescu~\cite{CioBOOK1990}. Other relevant treatments in the context of nonlinear functional analysis are by Brezis~\cite[Chapter~1]{BreBOOK2011}, Deimling~\cite[Section~12]{DeiBOOK1985}, Chidume~\cite[Chapter~3]{ChiBOOK2009} and Zeidler~\cite[Chapter~32.3d]{ZeiBOOK1990b}, while an early treatment on duality mappings is by Lions~\cite[Chapter~2, Section~2.2]{LioBOOK1969}.
%
We recall results for duality mappings that will be useful for the characterization of best approximations and residual minimizers. 
%
\begin{definition}[Duality mapping]
\label{def:Dmap}
Let $\mbbY$ be a normed vector space. The multivalued mapping ${\mathcal{J}_\mbbY: \mbbY\to 2^{\mbbY^\ast}}$ defined by
$$
\mathcal{J}_\mbbY(y):=\left\{y^\ast\in \mbbY^\ast \,:\, \<y^\ast,y\>_{\mbbY^\ast,\mbbY}=\|y\|^2_\mbbY=\|y^\ast\|_{\mbbY^\ast}^2\right\}\,,
$$
is the \emph{duality mapping} on $\mbbY$. 
\end{definition}
%
By the Hahn-Banach extension Theorem (see, e.g.,~\cite[Corollary 1.3]{BreBOOK2011}), the set $\mathcal{J}_\mbbY(y)\subset \mbbY^*$ is non-empty for every $y\in \mbbY$. 
Some basic properties of $\mathcal{J}_\mbbY$ are summarized in the following. 
%
\begin{proposition}[Duality mapping]
\label{prop:duality} Let $\mbbY$ be a normed vector space and \mbox{$y\in \mbbY$}. 
\begin{enumerate}[(i)]
\item The set 
$\mathcal{J}_\mbbY(y)\subset\mbbY^*$ is bounded, convex, and closed. 
\item The duality mapping $\mathcal{J}_\mbbY$ is homogeneous, and it is monotone in the sense that:
$$
\<y^\ast-z^\ast,y-z\>_{\mbbY^\ast,\mbbY}\geq \big( \norm{y}_\mbbY -\norm{z}_\mbbY\big)^2 \ge 0, 
$$
for all  $y,z\in \mbbY$, for all  
$y^\ast\in \mathcal J_\mbbY(y)$ and for all $z^\ast\in \mathcal J_\mbbY(z)$.
\item For any~$y^*\in \mcJ_\mbbY(y)$, its norm supremum is achieved by~$y$, i.e., 
\begin{alignat}{2}
\label{eq:normSup}
  \sup_{z\in \mbbY} \frac{\dual{y^*,z}_{\mbbY^*,\mbbY}}
  {\norm{z}_{\mbbY}} = \frac{\dual{y^*,y}_{\mbbY^*,\mbbY}}{\norm{y}_{\mbbY}} \,.
\end{alignat}
\end{enumerate}
\end{proposition}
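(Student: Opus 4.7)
My plan is to address each of the three claims in turn, exploiting the defining identity $\langle y^\ast, y\rangle = \|y\|_\mbbY^2 = \|y^\ast\|_{\mbbY^\ast}^2$ throughout and using the Hahn--Banach/duality inequality $|\langle y^\ast, z\rangle| \le \|y^\ast\|_{\mbbY^\ast}\|z\|_\mbbY$ as the main analytical tool.

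For part~(i), boundedness is immediate since every $y^\ast \in \mcJ_\mbbY(y)$ has norm exactly $\|y\|_\mbbY$. Closedness follows from continuity: if $y_k^\ast \to y^\ast$ in $\mbbY^\ast$ with each $y_k^\ast \in \mcJ_\mbbY(y)$, then $\|y^\ast\|_{\mbbY^\ast} = \lim_k \|y_k^\ast\|_{\mbbY^\ast} = \|y\|_\mbbY$ and $\langle y^\ast,y\rangle = \lim_k \langle y_k^\ast, y\rangle = \|y\|_\mbbY^2$. For convexity, given $y_1^\ast, y_2^\ast \in \mcJ_\mbbY(y)$ and $\theta\in[0,1]$, the convex combination $y_\theta^\ast := \theta y_1^\ast + (1-\theta) y_2^\ast$ trivially satisfies $\langle y_\theta^\ast,y\rangle = \|y\|_\mbbY^2$ and by the triangle inequality $\|y_\theta^\ast\|_{\mbbY^\ast} \le \|y\|_\mbbY$; the reverse inequality comes from chaining $\|y\|_\mbbY^2 = \langle y_\theta^\ast, y\rangle \le \|y_\theta^\ast\|_{\mbbY^\ast}\|y\|_\mbbY$, so the two norms agree and $y_\theta^\ast \in \mcJ_\mbbY(y)$.

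For part~(ii), homogeneity is checked by a direct substitution: if $y^\ast \in \mcJ_\mbbY(y)$ and $\lambda\in\mathbb R$, then $\langle \lambda y^\ast,\lambda y\rangle = \lambda^2\|y\|_\mbbY^2 = \|\lambda y\|_\mbbY^2 = \|\lambda y^\ast\|_{\mbbY^\ast}^2$, giving $\lambda y^\ast \in \mcJ_\mbbY(\lambda y)$ (and the opposite inclusion follows from the same argument applied to $\lambda^{-1}$ when $\lambda \neq 0$). For monotonicity, I expand
\begin{alignat*}{2}
\langle y^\ast - z^\ast, y - z\rangle_{\mbbY^\ast,\mbbY}
  = \|y\|_\mbbY^2 + \|z\|_\mbbY^2 - \langle y^\ast, z\rangle - \langle z^\ast, y\rangle\,,
\end{alignat*}
and then bound the two cross terms using $|\langle y^\ast,z\rangle| \le \|y^\ast\|_{\mbbY^\ast}\|z\|_\mbbY = \|y\|_\mbbY\|z\|_\mbbY$, and similarly for $|\langle z^\ast,y\rangle|$. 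The right-hand side is then at least $\|y\|_\mbbY^2 + \|z\|_\mbbY^2 - 2\|y\|_\mbbY\|z\|_\mbbY = (\|y\|_\mbbY - \|z\|_\mbbY)^2 \ge 0$, which is the claim.

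For part~(iii), the observation is that the supremum defining the dual norm is $\|y^\ast\|_{\mbbY^\ast} = \|y\|_\mbbY$, while plugging $z = y$ into the ratio gives $\langle y^\ast, y\rangle / \|y\|_\mbbY = \|y\|_\mbbY^2/\|y\|_\mbbY = \|y\|_\mbbY$ (assuming $y\neq 0$; the case $y=0$ forces $y^\ast = 0$ and the identity is vacuous). Thus the supremum is attained at $z=y$, establishing~\eqref{eq:normSup}. None of these steps presents a genuine obstacle; the only mild subtlety is keeping track that all parts of the defining identity (the two norm equalities and the pairing identity) must be simultaneously preserved, which is what dictates the order in which inequalities are chained in part~(i).
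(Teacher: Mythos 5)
Your proof is correct: the boundedness/closedness/convexity argument, the expansion giving $\langle y^\ast-z^\ast,y-z\rangle \ge (\|y\|_\mbbY-\|z\|_\mbbY)^2$, and the attainment of the dual-norm supremum at $z=y$ are all sound, and the trivial $y=0$ cases you flag are handled harmlessly. The paper itself offers no proof but simply cites the classical literature (e.g.\ Brezis, Chapter~1; Cioranescu), and your argument is precisely the standard one found there, so there is nothing to add.
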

%
\begin{proof} 
These results are classical; see, e.g., \cite[Chapter~1]{BreBOOK2011}. 
\end{proof}
%
%
We next list important properties of the duality mapping~$\mcJ_{\mbbY}:\mbbY\rightarrow 2^{\mbbY^*}$ in special Banach spaces. 
%
\subsubsection{Strict convexity of $\mbbY^\ast$}\label{sec:strictconvexdual}
The space $\mbbY^*$ is strictly convex if and only if
$\mcJ_{\mbbY}:\mbbY\rightarrow 2^{\mbbY^\ast}$ is a \emph{single-valued map}; see~\cite[Proposition 12.3]{DeiBOOK1985}. In that case we use the notation: 
\begin{alignat*}{2}
  J_\mbbY:\mbbY\to \mbbY^\ast, \quad \text{in other words,} 
  \quad \mcJ_\mbbY(y) = \{J_\mbbY(y)\}.
\end{alignat*}
%
%
%
Furthermore, if~$\mbbY^*$ is strictly convex, then $\mcJ_{\mbbY}:\mbbY\rightarrow 2^{\mbbY^\ast}$ is \emph{hemi-continuous}~\cite[Section~12.3]{DeiBOOK1985}:
\begin{alignat}{2}
\label{eq:hemicont}
   J_{\mbbY}(y+\lambda z) \rightharpoonup  J_{\mbbY}(y)
   \qquad \text{as } \lambda \rightarrow 0^+,\qquad \forall y,z\in \mbbY.
\end{alignat}
\par
Another important property is concerned with the duality map on subspaces.
We state this as the following Lemma, and we include a proof since we could not find this result in the existing literature.
%
\begin{lemma}[Duality map on a subspace]
\label{lem:IJI}
Let $\mbbY$ be a Banach space, $\mbbY^*$ strictly convex, and $J_\mbbY: \mbbY \rightarrow \mbbY^*$ denote the duality map on $\mbbY$. Let $\mbbM\subset \mbbY$ denote a linear subspace of~$\mbbY$, and $J_\mbbM:\mbbM\rightarrow \mbbM^*$ denote the corresponding duality map on $\mbbM$.
Then, 
\begin{alignat*}{2}
  I_\mbbM^* J_\mbbY \circ I_\mbbM = J_{\mbbM}\,,
\end{alignat*}
where $I_\mbbM:\mbbM \rightarrow \mbbY$ is the natural injection. 
\end{lemma}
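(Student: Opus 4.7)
The plan is to unwind the definition of the duality map on both sides and directly verify that $I_\mbbM^* J_\mbbY(I_\mbbM m)$ satisfies the defining conditions of $J_\mbbM(m)$. Fix $m\in\mbbM$ and write $y^*:=J_\mbbY(I_\mbbM m)\in\mbbY^*$. By Definition~\ref{def:Dmap}, $y^*$ satisfies
\begin{alignat*}{2}
 \dual{y^*,I_\mbbM m}_{\mbbY^*,\mbbY}=\norm{I_\mbbM m}_\mbbY^2=\norm{y^*}_{\mbbY^*}^2\,.
\end{alignat*}
Since $I_\mbbM$ is the natural injection, $\norm{I_\mbbM m}_\mbbY=\norm{m}_\mbbM$. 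Also, by the standard characterization of the Banach-space adjoint, $I_\mbbM^* y^*\in\mbbM^*$ acts on any $\tilde m\in\mbbM$ via restriction, i.e.
\begin{alignat*}{2}
 \dual{I_\mbbM^* y^*,\tilde m}_{\mbbM^*,\mbbM}=\dual{y^*,I_\mbbM\tilde m}_{\mbbY^*,\mbbY}\,.
\end{alignat*}

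Next I would verify the two defining identities of $\mcJ_\mbbM(m)$ for the candidate $I_\mbbM^* y^*$. First, taking $\tilde m=m$ gives
\begin{alignat*}{2}
 \dual{I_\mbbM^* y^*,m}_{\mbbM^*,\mbbM}=\dual{y^*,I_\mbbM m}_{\mbbY^*,\mbbY}=\norm{I_\mbbM m}_\mbbY^2=\norm{m}_\mbbM^2\,.
\end{alignat*}
For the norm identity I would combine two bounds. The upper bound follows from restriction:
\begin{alignat*}{2}
 \norm{I_\mbbM^* y^*}_{\mbbM^*}=\sup_{\tilde m\in\mbbM}\frac{\dual{y^*,I_\mbbM\tilde m}_{\mbbY^*,\mbbY}}{\norm{\tilde m}_\mbbM}\le\sup_{y\in\mbbY}\frac{\dual{y^*,y}_{\mbbY^*,\mbbY}}{\norm{y}_\mbbY}=\norm{y^*}_{\mbbY^*}=\norm{m}_\mbbM\,,
\end{alignat*}
and the matching lower bound follows by testing against $m$:
\begin{alignat*}{2}
 \norm{I_\mbbM^* y^*}_{\mbbM^*}\ge\frac{\dual{I_\mbbM^* y^*,m}_{\mbbM^*,\mbbM}}{\norm{m}_\mbbM}=\frac{\norm{m}_\mbbM^2}{\norm{m}_\mbbM}=\norm{m}_\mbbM\,.
\end{alignat*}
Hence $\norm{I_\mbbM^* y^*}_{\mbbM^*}=\norm{m}_\mbbM$, and therefore $I_\mbbM^* y^*\in\mcJ_\mbbM(m)$.

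The final step is to conclude $I_\mbbM^* y^*=J_\mbbM(m)$. The main obstacle is that the hypothesis only gives strict convexity of $\mbbY^*$, which ensures single-valuedness of $J_\mbbY$ but not a priori of $J_\mbbM$; in general $\mbbM^*$ need not inherit strict convexity from $\mbbY^*$. However, the argument above has shown, without requiring single-valuedness of $\mcJ_\mbbM$, that the element produced by the composition always lies in the duality set; so at worst the lemma asserts the equality $I_\mbbM^*\mcJ_\mbbY\circ I_\mbbM\subseteq \mcJ_\mbbM$, with genuine set equality when $\mcJ_\mbbM$ happens to be single-valued (in which case the notation $J_\mbbM$ is unambiguous). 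I would therefore present the result in this form, noting that the construction above provides a canonical single-valued selection of the duality map on $\mbbM$ inherited from the one on $\mbbY$.
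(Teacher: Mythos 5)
Your computation is correct as far as it goes: it establishes that $I_\mbbM^* J_\mbbY(I_\mbbM m)$ belongs to the set $\mcJ_\mbbM(m)$. But the point at which you stop --- retreating to the weaker claim $I_\mbbM^*\mcJ_\mbbY\circ I_\mbbM\subseteq\mcJ_\mbbM$ and suggesting the lemma be restated in that form --- is a genuine gap, not a defensible reformulation. The lemma asserts equality with the \emph{single-valued} map $J_\mbbM$, and this full statement is what the paper relies on later (e.g.\ in Lemma~\ref{lem:Lambda_properties}(iii), in the proof of Theorem~\ref{teo:discrete_residual}, and in Proposition~\ref{prop:Pbound}); an inclusion into a possibly multivalued $\mcJ_\mbbM$ would not support those arguments. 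The obstacle you perceive is not real, and the paper's proof shows how to remove it by arguing in the opposite direction to yours: take \emph{any} $z^*\in\mcJ_\mbbM(m)$ and extend it by Hahn--Banach to $\widetilde{z^*}\in\mbbY^*$ with $\norm{\widetilde{z^*}}_{\mbbY^*}=\norm{z^*}_{\mbbM^*}=\norm{I_\mbbM m}_\mbbY$. Since also $\dual{\widetilde{z^*},I_\mbbM m}_{\mbbY^*,\mbbY}=\dual{z^*,m}_{\mbbM^*,\mbbM}=\norm{I_\mbbM m}_\mbbY^2$, the extension lies in $\mcJ_\mbbY(I_\mbbM m)$, which is the singleton $\{J_\mbbY(I_\mbbM m)\}$ precisely because $\mbbY^*$ is strictly convex. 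Restricting back, $z^*=I_\mbbM^*\widetilde{z^*}=I_\mbbM^*J_\mbbY(I_\mbbM m)$. Hence every element of $\mcJ_\mbbM(m)$ coincides with the one you constructed: $\mcJ_\mbbM(m)$ is a singleton, single-valuedness of $J_\mbbM$ is a \emph{consequence} of the hypothesis rather than an extra assumption, and the asserted identity holds.

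Your supporting worry --- that $\mbbM^*$ need not inherit strict convexity from $\mbbY^*$ --- is true as a general fact about quotients but beside the point: what single-valuedness of $\mcJ_\mbbM(m)$ requires is uniqueness of the norming functional of $m$ \emph{in} $\mbbM^*$, and this follows from the extension argument above (two distinct elements of $\mcJ_\mbbM(m)$ would produce, via norm-preserving extension, two distinct elements of $\mcJ_\mbbY(I_\mbbM m)$, contradicting strict convexity of $\mbbY^*$). So to repair your proof, keep your verification that the composite lands in $\mcJ_\mbbM(m)$ if you wish, but add the Hahn--Banach extension step; in fact that step alone, combined with the non-emptiness of $\mcJ_\mbbM(m)$, already yields the lemma, which is exactly the paper's route.
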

%
\begin{proof}
Let $z\in\mbbM$ and consider the linear and continuous functional ${J_\mbbM(z)\in\mbbM^*}$. Using the Hahn--Banach extension (see~\cite[Corollary~1.2]{BreBOOK2011}), we extend this functional to an element $\widetilde{J_\mbbM(z)}\in\mbbY^*$ such that ${\norm{\widetilde{J_\mbbM(z)}}_{\mbbY^*}=\norm{J_\mbbM(z)}_{\mbbM^*}}\,$.%
\footnote{In fact, the Hahn--Banach extension is unique on account of strict convexity of~$\mbbY^*$.}
Observe that the extension satisfies
\begin{alignat*}{2}
&\norm{\widetilde{J_\mbbM(z)}}_{\mbbY^*}=\|I_\mbbM z\|_\mbbY 
\qquad \text{and}
\\
&\dual{\widetilde{J_\mbbM(z)},I_\mbbM z}_{\mbbY^*,\mbbY}=
\dual{J_\mbbM(z),z}_{\mbbM^*,\mbbM}=\norm{I_\mbbM z}^2_\mbbY .
\end{alignat*}
So, as a matter of fact, $\widetilde{J_\mbbM(z)}=J_\mbbY(I_\mbbM z)$. Therefore, by the extension property of~$\widetilde{J_\mbbM(z)}$ we obtain
\begin{alignat*}{2}
I_\mbbM^*J_\mbbY(I_\mbbM z)=I_\mbbM^*\widetilde{J_\mbbM(z)}=J_\mbbM(z).
\end{alignat*}
\end{proof}
%
\subsubsection{Strict convexity of $\mbbY$}
If $\mbbY$ is strictly convex, then $\mathcal J_\mbbY$ is \emph{strictly monotone}, that is:
\begin{alignat}{2}
 \label{eq:strictMonotone}
 \<y^\ast-z^\ast,y-z\>_{\mbbY^\ast,\mbbY} > 0, 
 \quad \text{for all } y \neq z\,,
\text{ any } y^* \in \mcJ_\mbbY(y)
\text{ and } z^* \in \mcJ_\mbbY(z) \,. 
\end{alignat}
Furthermore, $\mathcal J_\mbbY:\mbbY\to 2^{\mbbY^*}$ is \emph{injective}. In fact, if $y$ and $z$ are two distinct points in $\mbbY$, then $\mathcal J_\mbbY(y)\cap\mathcal J_\mbbY(z)=\emptyset$ (otherwise~\eqref{eq:strictMonotone} would be contradicted).
%
It is known that the converse holds as well: Strict monotonicity of $\mathcal J_\mbbY$ implies strict convexity of $\mbbY$, a result due to Petryshyn~\cite{PetJFA1970}. 
%
%
\subsubsection{Reflexivity of $\mbbY$}\label{sec:reflexivecase}
The space $\mbbY$ is a reflexive Banach space if and only if $\mathcal J_\mbbY:\mbbY\to 2^{\mbbY^*}$ is \emph{surjective}; see~\cite[Theorem 12.3]{DeiBOOK1985}. This is meant in the following sense: Every $y^\ast\in \mbbY^\ast$ belongs to a set $\mathcal J_\mbbY(y)$, for some $y\in \mbbY$. 

\subsubsection{Reflexive smooth setting}
\label{sec:dualMapSmooth}
An important case in our study is when the Banach space $\mathbb Y$ has all the previously listed properties, i.e., $\mbbY$ and $\mbbY^*$ are strictly convex and reflexive Banach spaces, referred to as the \emph{reflexive smooth setting}. Two important straightforward consequences need to be remarked in this situation:
\begin{enumerate}[(i)]
\item The duality maps $J_\mbbY:\mbbY\to\mbbY^*$ and $J_{\mbbY^*}:\mbbY^*\to\mbbY^{**}$ are bijective.
\item $J_{\mbbY^*}=\mathcal I_\mbbY\circ J^{-1}_\mbbY$, where $\mathcal I_\mbbY:\mbbY\to\mbbY^{**}$ is the canonical injection. Shortly, $J_{\mbbY^*}= J^{-1}_\mbbY$, by means of canonical identification.
\end{enumerate}
%
\subsubsection{Subdifferential property}
A key result is that the duality mapping coincides with a~\emph{subdifferential}. Recall that for a Banach space~$\mbbY$ and function $f:\mbbY\to \rr$, the subdifferential~$\partial f(y)$ of~$f$ at a point $y\in\mbbY$ is defined as the set:
\begin{alignat*}{2}
\partial f(y):=\Big\{y^*\in \mbbY^*: f(z)-f(y)\geq \<y^*,z-y\>_{\mbbY^{\ast},\mbbY} \,,\,\forall z \in \mbbY\Big\}\,.
\end{alignat*}
%
%
\begin{proposition}[Duality mapping is a subdifferential]\label{prop:subdifferential}
Let $f_\mbbY:\mbbY \to \rr$ be defined by $f_\mbbY(\cdot):=\frac{1}{2}\|\cdot\|_\mbbY^2$. Then, for any $y\in \mbbY$, 
$
\mathcal J_\mbbY(y)=\partial f_\mbbY(y).
$
\end{proposition}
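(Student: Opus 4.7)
My plan is a direct proof by showing the two set-inclusions $\mcJ_\mbbY(y)\subseteq\partial f_\mbbY(y)$ and $\partial f_\mbbY(y)\subseteq\mcJ_\mbbY(y)$, exploiting the Fenchel-type identity $\frac{1}{2}a^2+\frac{1}{2}b^2\ge ab$ in the form $\dual{y^*,z}\le\|y^*\|_{\mbbY^*}\|z\|_\mbbY\le\frac{1}{2}\|y^*\|_{\mbbY^*}^2+\frac{1}{2}\|z\|_\mbbY^2$. No deep machinery is needed; the whole statement is really the Fenchel--Young equality specialised to $\tfrac{1}{2}\|\cdot\|^2$ and its conjugate $\tfrac{1}{2}\|\cdot\|_{\mbbY^*}^2$.

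For the inclusion $\mcJ_\mbbY(y)\subseteq\partial f_\mbbY(y)$, I would take $y^*\in\mcJ_\mbbY(y)$ so that $\dual{y^*,y}_{\mbbY^*,\mbbY}=\|y\|_\mbbY^2=\|y^*\|_{\mbbY^*}^2$, and for arbitrary $z\in\mbbY$ compute
\begin{align*}
f_\mbbY(z)-f_\mbbY(y)-\dual{y^*,z-y}_{\mbbY^*,\mbbY}
&=\tfrac{1}{2}\|z\|_\mbbY^2-\tfrac{1}{2}\|y\|_\mbbY^2-\dual{y^*,z}_{\mbbY^*,\mbbY}+\|y\|_\mbbY^2\\
&\ge\tfrac{1}{2}\|z\|_\mbbY^2+\tfrac{1}{2}\|y\|_\mbbY^2-\|y^*\|_{\mbbY^*}\|z\|_\mbbY\\
&=\tfrac{1}{2}\bigl(\|z\|_\mbbY-\|y\|_\mbbY\bigr)^2\ge 0,
\end{align*}
where in the second line I used $\dual{y^*,z}\le\|y^*\|_{\mbbY^*}\|z\|_\mbbY$ and in the last line the identity $\|y^*\|_{\mbbY^*}=\|y\|_\mbbY$. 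This is exactly the subdifferential inequality, hence $y^*\in\partial f_\mbbY(y)$.

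For the reverse inclusion, I would take $y^*\in\partial f_\mbbY(y)$ and extract the two defining properties of the duality mapping by choosing suitable test points $z$. First, substituting $z=ty$ with $t\in\mathbb{R}$ in the subdifferential inequality yields $\tfrac{1}{2}(t^2-1)\|y\|_\mbbY^2\ge(t-1)\dual{y^*,y}_{\mbbY^*,\mbbY}$; dividing by $t-1$ and letting $t\to 1^+$ and $t\to 1^-$ respectively pinches $\dual{y^*,y}_{\mbbY^*,\mbbY}=\|y\|_\mbbY^2$. Second, testing with $z=y+sw$ for arbitrary $w\in\mbbY$ and $s>0$ gives $\tfrac{1}{2}\bigl(\|y+sw\|_\mbbY^2-\|y\|_\mbbY^2\bigr)\ge s\dual{y^*,w}_{\mbbY^*,\mbbY}$; using the triangle inequality $\|y+sw\|_\mbbY\le\|y\|_\mbbY+s\|w\|_\mbbY$, dividing by $s$, and letting $s\to 0^+$ produces $\dual{y^*,w}_{\mbbY^*,\mbbY}\le\|y\|_\mbbY\|w\|_\mbbY$ for every $w$, so $\|y^*\|_{\mbbY^*}\le\|y\|_\mbbY$. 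Combining with $\|y\|_\mbbY^2=\dual{y^*,y}_{\mbbY^*,\mbbY}\le\|y^*\|_{\mbbY^*}\|y\|_\mbbY$ gives $\|y^*\|_{\mbbY^*}=\|y\|_\mbbY$ (trivially if $y=0$), and all the equalities defining $\mcJ_\mbbY(y)$ hold.

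There is no real obstacle here; the only delicate point is making the two limits in $t\to 1^\pm$ rigorous (dividing by a positive vs.\ negative quantity flips the inequality, which is exactly what sandwiches $\dual{y^*,y}$ to $\|y\|_\mbbY^2$). I would write the argument symmetrically to make this visible.
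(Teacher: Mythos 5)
Your proof is correct. Both inclusions are established cleanly: the forward inclusion is exactly the Fenchel--Young-type estimate $\tfrac12\|z\|_\mbbY^2+\tfrac12\|y\|_\mbbY^2-\|y\|_\mbbY\|z\|_\mbbY=\tfrac12(\|z\|_\mbbY-\|y\|_\mbbY)^2\ge0$, and the reverse inclusion correctly extracts $\dual{y^*,y}_{\mbbY^*,\mbbY}=\|y\|_\mbbY^2$ by the two-sided limit $t\to1^\pm$ along $z=ty$ (the sign flip when dividing by $t-1<0$ is handled properly) and $\|y^*\|_{\mbbY^*}\le\|y\|_\mbbY$ by testing with $z=y+sw$ and letting $s\to0^+$; the degenerate case $y=0$ is also covered, since there $\|y^*\|_{\mbbY^*}\le0$ forces $y^*=0$. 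Note that the paper itself does not prove this proposition but merely cites Cioranescu's monograph, where the standard proof (for general gauge functions) proceeds along essentially the same lines as yours; so your argument supplies a self-contained elementary version of exactly the classical proof, specialised to the gauge $\varphi(t)=t$, rather than a genuinely different route.
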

%
\begin{proof}
See, e.g., Asplund~\cite{AspBOAMS1967} or Cioranescu~\cite[p.~26]{CioBOOK1990}.
%
\end{proof}
%
\begin{remark}[G\^ateaux gradient]
The subdifferential of $f_\mbbY(\cdot)=\frac{1}{2}\|\cdot\|_\mbbY^2$ contains exactly one point if $\mbbY^*$ is strictly convex (recall from Section~\ref{sec:strictconvexdual}). In that case, $f_\mbbY$ is G\^ateaux differentiable with G\^ateaux gradient~$\nabla f_\mbbY(\cdot)$, 
and for any $y\in \mbbY$ we have (see e.g.~\cite[Corollary 2.7]{CioBOOK1990}):
\begin{alignat*}{1}
J_\mbbY(y)=\nabla f_\mbbY(y)\,.%
\footnotemark
\end{alignat*} 
%
\end{remark}
\footnotetext{If $\mbbY^*$ is \emph{uniformly convex}, then $f_\mbbY$ is Fr\'echet differentiable and the duality map $J_\mbbY:\mbbY\to\mbbY^*$ is uniformly continuous on the unit sphere of $\mbbY$. Moreover, by the Milman--Pettis Theorem~\cite[Section~3.7]{BreBOOK2011}, uniform convexity of $\mbbY^*$ implies reflexivity of $\mbbY^*$ (hence, reflexivity of $\mbbY$). Note however that a strictly convex space is not necessarily uniformly convex, see~\cite[Section~12.1]{DeiBOOK1985} for an example.}
%
%
%
\begin{example}[The $L^p$ case]\label{ex:Lp_Dmap}
We recall here an explicit formula for the duality map in the Banach space 
$L^p(\Omega)$ where $\Omega \subset \mbbR^d$, $d\ge 1$. For $p\in(1,+\infty)$ the space $L^p$ is reflexive and strictly convex (as well as the dual space $L^{q}$, where $q=\frac{p}{p-1}$); see e.g.~\cite[Chapter~II]{CioBOOK1990} and~\cite[Section~4.3]{BreBOOK2011}. For $v\in L^p(\Omega)$ the duality map is defined by the action: 
%
\begin{equation}
\label{eq:Dmapa}
\bigdual{ J_{L^p(\Omega)}(v),w }_{L^q(\Omega),L^p(\Omega)}
 :=\bignorm{v}_{L^p(\Omega)}^{2-p}
 \int_\Omega |v|^{p-1}\sign(v)\,w ,\quad \forall w\in L^p(\Omega)\,,
\end{equation}
which can shown by computing the G\^{a}teaux derivative of~$v\mapsto \tfrac{1}{2}(\int_\Omega |v|^p)^{2/p}$, or by verifying the identities in Definition~\ref{def:Dmap}. 
%
In the case $p=1$, the formula in the right-hand side of~\eqref{eq:Dmapa} also works and defines an element in the set~$\mcJ_{L^1(\Omega)}(v)$. 
Note however that $L^1$ is not a special Banach space as discussed above. 
\end{example}
\subsection{Best approximation in Banach spaces}
\label{sec:bestAppr}
%
We now consider theory of best approximation. First we recall classical results on existence, uniqueness and characterization. Then, based on geometrical constants of the underlying Banach space, we develop two novel a~priori bounds for best approximations (Propositions~\ref{prop:ba_apriori_bound} and~\ref{prop:improvedApriori}), which are of independent interest.
\subsubsection{Existence, uniqueness and characterization}
Best approximation in Banach spaces is founded on the following classical result.
%
\begin{theorem}[Best approximation]\label{thm:minimizer_characterization}
Let $\mbbY$ be a Banach space, and $y\in\mbbY$.
\begin{enumerate}[(i)]
\item Suppose $\mbbM \subset\mbbY$ is a finite-dimensional subspace, then there exists a best approximation $y_0\in \mbbM$ to~$y$ such that 
\begin{equation*}
 \norm{y-y_0}_\mbbY =\min_{z_0\in\mbbM}\norm{y-z_0}_{\mbbY}\;. 
\end{equation*}
\item Suppose $\mbbM \subset\mbbY$ is any subspace and~$\mbbY$ is strictly convex, then a best approximation $y_0\in \mbbM$ to~$y$ 
is unique.
\item Suppose $\mbbM \subset\mbbY$ is a closed subspace, then the following statements are equivalent:
\begin{itemize}
\item $\ds{ y_0=\arg\min_{z_0\in\mbbM}\|y-z_0\|_\mbbY }$.
\item There exists a functional $y^\ast\in\mathcal{J}_\mbbY(y-y_0)$ which annihilates~$\mbbM$, i.e., 
$\<y^\ast,z_0\>_{\mbbY^\ast,\mbbY}=0$, for all $z_0\in \mbbM$, where $\mathcal{J}_\mbbY : \mbbY \rightarrow \mbbY^*$ is the duality mapping defined in Definition~\ref{def:Dmap}.
\end{itemize}
\end{enumerate}
\end{theorem}
\begin{proof}
For parts (i) and (ii) see, e.g., Stakgold~\& Holst~\cite[Section~10.2]{StaHolBOOK2011} or DeVore~\& Lorentz~\cite[Chapter~3]{DevLorBOOK1993}. For part~(iii) in case of~$y\in \mbbY\setminus \mbbM$ see, e.g., Singer~\cite{SinBOOK1970} or Braess~\cite{BraBOOK1986}. The case of~$y\in \mbbM$ is trivial, because in that case~$y_0 = y$ and one can choose $y^*= 0$.
\end{proof} 
%
\subsubsection{Banach--Mazur constant and nonlinear projector estimate}
%
To describe the first of two novel a~priori bounds for best approximations, we recall the \emph{Banach--Mazur constant}~\cite[Definition~2]{SteNM2015}. The Banach--Mazur constant is based on the classical \emph{Banach--Mazur distance}, whose motivation is best described by Banach himself~\cite[p.~189]{BanBOOK2009}:
\begin{quote} 
\emph{``A Banach space~$X$ is isometrically isomorphic to a Hilbert space if and only if every two-dimensional subspace of~$X$ is isometric to a Hilbert space.''}
\end{quote}
%
\begin{definition}[Banach--Mazur constant]
\label{def:Banach-Mazur}
Let $\mathbb Y$ be a normed vector space with~$\dim \mbbY \ge 2$, and let $\ell_2(\mathbb{R}^2)$ be the 2-D Euclidean space endowed with the $2$-norm. The Banach--Mazur constant of $\mathbb Y$ is defined by
\begin{alignat*}{2}
  \CBM(\mbbY) := \sup 
  \Big\{ \big(\dBM(\mathbb W,\ell_2(\realset^2))\big)^2 \,:\, \mathbb W \subset \mbbY\,,\, \dim \mathbb W = 2 
  \Big\}\,,
\end{alignat*}
where $\dBM(\cdot,\cdot)$ is the (multiplicative) Banach--Mazur distance:
\begin{alignat*}{2}
  \dBM(\mathbb W,\ell_2(\realset^2) ) := 
  \inf \Big\{  
  \norm{T} \norm{T^{-1}} \,:\, T \text{ is a linear isomorphism\footnotemark{} } \mathbb W
  \rightarrow \ell_2(\realset^2)
  \Big\}.
\end{alignat*}
\end{definition}
\footnotetext{i.e., $T$ is a linear bounded bijective operator.}
Since the definition only makes sense when~$\dim \mbbY \ge 2$, henceforth, whenever~$\CBM(\cdot)$ is written, we assume this to be the case. (Note that~$\dim \mbbY = 1$ is often an uninteresting trivial situation.)
%
\begin{remark}[Elementary properties of~$\CBM$]\label{rem:BM}
It is known that $1\le \CBM(\mbbY)\le 2$, $\CBM(\mbbY) = 1$ if and only if~$\mbbY$ is a Hilbert space, and $\CBM(\mbbY) = 2$ if~$\mbbY$ is non-reflexive; see~\cite[Section~3]{SteNM2015}. In particular, for $\mbbY = \ell_p(\mathbb{R}^2)$, $\CBM(\mbbY) = 2^{|\frac{2}{p}-1|}$; cf.~\cite[Section~II.E.8]{WojBOOK1991} and \cite[Section~8]{JohLinBOOK-CH2001}. This result is also true for $L^p$ and Sobolev spaces~$W^{k,p}$ ($k\in \mathbb{N}$), see~\cite[Section~5]{SteNM2015}.
\end{remark}
%
\par
The Banach--Mazur constant is used in the Lemma below to state a fundamental estimate for an abstract nonlinear projector. This nonlinear projector estimate, which is an extension of Kato's identity $\norm{I-P} = \norm{P}$ for Hilbert-space projectors~\cite{KatNM1960}, and a generalization of the estimate obtained by Stern~\cite[Theorem~3]{SteNM2015} for linear Banach-space projectors, will be used to prove the a~priori bound in Proposition~\ref{prop:ba_apriori_bound} and also Corollary~\ref{col:NPGbound} in Section~\ref{sec:practical}.
\begin{lemma}[Nonlinear projector estimate]
\label{lem:I-P}
Let $\mbbY$ be a normed space, $I:\mbbY\to\mbbY$ the identity and $Q:\mbbY\to\mbbY$ a nonlinear operator such that: 
\begin{enumerate}[(i)] 
\item \label{NP:P}
$Q$ is a nontrivial projector: $0\neq Q=Q\circ Q \neq I$\,.
\item \label{NP:homog} 
$Q$ is homogeneous: $Q(\lambda y)=\lambda Q(y)$, $\quad\forall y\in\mbbY$ and $\forall \lambda\in\mathbb R$\,. 
\item \label{NP:bounded} 
$Q$ is bounded in the sense that $\|Q\|:=\displaystyle\sup_{y\in\mbbY\setminus\{0\}}
\frac{\|Q(y)\|_\mbbY}{\|y\|_\mbbY}<+\infty$\,.
\item \label{NP:GOP}
$Q$ is a generalized orthogonal projector in the sense that 
\begin{alignat*}{2}
  Q(y) = Q\Big( Q(y)  + \eta\, (I-Q)(y)\Big)  \,,
  \qquad \text{for any } \eta \in \mbbR \text{ and any } y\in \mbbY \,.
\end{alignat*}
%
\end{enumerate}
Then the nonlinear operator $I-Q$ is also bounded and satisfies 
\begin{alignat*}{2}
 \norm{I-Q} \le C_{\mathrm{S}} \norm{Q},
\end{alignat*}
where $C_{\mathrm{S}}$ is the constant introduced by Stern~\cite{SteNM2015}:
\begin{alignat}{2}
\label{eq:CS}
  C_{\mathrm{S}} = \min\Big\{ 1+ \norm{Q}^{-1} , \CBM(\mbbY)\Big\}.
\end{alignat}
%
\end{lemma}
\begin{proof}
The proof of this result follows closely Stern~\cite[Proof of Theorem~3]{SteNM2015}. Although Stern considers linear projectors, his result generalizes to projectors with the properties in~(i)--(iv). See Section~\ref{sec:proof_lemma_I-P} for the complete proof. 
\end{proof}
\begin{remark}[Generalized orthogonal projectors]
Requirement~(\ref{NP:GOP}) in Lemma~\ref{lem:I-P} is a key nonlinear property. We point out that it is satisfied by linear projectors, by best-approximation projectors, by $I$~minus best-approximation projectors (as in the proof of Proposition~\ref{prop:ba_apriori_bound}), and by (inexact) nonlinear Petrov-Galerkin projectors~$P_n$ of Definition~\ref{def:nlpgp} (see Corollary~\ref{col:NPGbound}).
\end{remark}
\subsubsection{A~priori bound~I}
The first a priori bound for best approximations is obtained by applying Lemma~\ref{lem:I-P}. 
%
%
\begin{proposition}[Best approximation: A priori bound~I]\label{prop:ba_apriori_bound}
Let $\mbbY$ be a Banach space and $\mbbM \subset\mbbY$ a closed subspace. Suppose $y_0\in \mbbM$ is a best approximation in $\mathbb M$ of a given $y\in\mathbb Y$ (i.e., $\|y-y_0\|_\mbbY\leq \|y-z_0\|_\mbbY$, for all $z_0\in\mbbM$), then $y_0$ satisfies the a priori bound:
\begin{equation}\label{eq:new_sharpen_estimate}
\norm{y_0}_\mathbb Y \leq \CBM(\mathbb Y) \norm{y}_\mathbb Y\,\,,
\end{equation}
where $\CBM(\mbbY)$ is the Banach-Mazur constant of the space $\mbbY$ (see Definition~\ref{def:Banach-Mazur}).~
\end{proposition}
%
\begin{proof}
We assume that $\mbbM\neq \{0\}$ and $\mbbM\neq\mbbY$ (otherwise the result is trivial). Consider a (nonlinear) map $P^\perp:\mbbY\to\mbbY$ such that $P^\perp(y)=y-y_0$, where $y_0\in\mbbM$ is a best approximation to $y\in\mbbY$. 
The map $P^\perp$ can be chosen in a homogeneous way, i.e., satisfying $\lambda P^\perp(y)=P^\perp(\lambda y)$ for any $\lambda\in \mathbb R$.
Observe that 
$$\|P^\perp(y)\|_\mbbY=\|y-y_0\|_\mbbY\leq \|y-0\|_\mbbY=\|y\|_\mbbY\,.$$
Hence, $P^\perp$ is bounded with $\|P^\perp\|\leq 1$.
Additionally, it can be verified that $P^\perp(P^\perp(y))=y-y_0-0=P^\perp(y)$. 
Thus, $Q=P^\perp$ satisfies the requirements~(\ref{NP:P}), (\ref{NP:homog}) and (\ref{NP:bounded}) of Lemma~\ref{lem:I-P} with $\|P^\perp\|=1$. To verify requirement~(\ref{NP:GOP}), notice that 
for any $\eta\in\mbbR$,
\begin{alignat*}{2}
  P^\perp\Big( P^\perp(y) + \eta \, (I-P^\perp)(y) \Big) = 
  P^\perp \Big( y-y_0 + \eta y_0 \Big) = y-y_0\,,
\end{alignat*}
since $\eta y_0$ is a best approximation in~$\mbbM$ to~$y-y_0 + \eta y_0 $.
%
%
Therefore, by Lemma~\ref{lem:I-P} we get:
\begin{alignat*}{2}
\|y_0\|_\mbbY=\bignorm{ (I-P^\perp)y}_\mbbY
\leq\min\Big\{1+\|P^\perp\|^{-1}\,,\,\CBM(\mathbb Y)\Big\}\|P^\perp\| \|y\|_\mbbY \,,
\end{alignat*}
and~\eqref{eq:new_sharpen_estimate} follows since $\|P^\perp\| = 1$ and $\CBM(\mathbb Y) \le 2$\,.
\end{proof}
%
%
\begin{remark}[Sharpness of~\eqref{eq:new_sharpen_estimate}]
Bound~\eqref{eq:new_sharpen_estimate} improves the \emph{classical bound} $\norm{y_0}_\mbbY\le 2\norm{y}_\mbbY$ (see, e.g.,~\cite[Sec.~10.2]{StaHolBOOK2011}), in the sense that it shows an explicit dependence on the geometry of the underlying Banach space. In particular, \eqref{eq:new_sharpen_estimate}~contains the standard result $\norm{y_0}_\mbbY\le \norm{y}_\mbbY$ for a Hilbert space, as well as the classical bound $\norm{y_0}_\mbbY\le 2\norm{y}_\mbbY$ for non-reflexive spaces such as~$\ell_1(\mathbb{R}^2)$ and $\ell_\infty(\mathbb{R}^2)$ (for which the bound is indeed sharp; see Example~\ref{ex:ell1}). However,~\eqref{eq:new_sharpen_estimate} need not be sharp for intermediate spaces; see Example~\ref{ex:CAOlp}.
\end{remark}
\begin{example}[$\ell_1(\mathbb{R}^2)$]
\label{ex:ell1}
In $\rr^2$ with the norm 
$\norm{(x_1,x_2)}_1=|x_1|+|x_2|$, i.e. $\mbbY = \ell_1(\mathbb{R}^2)$, the best approximation of the point $(1,0)$ over the line $\{(t,t) : t\in\rr\}$ is the whole segment $\{(t,t) : t\in[0,1]\}$. Moreover, the point $(1,1)$ is a best approximation and $\norm{(1,1)}_1=2=2\norm{(0,1)}_1$. Since the Banach--Mazur constant for this case equals~2, Eq.~\eqref{eq:new_sharpen_estimate}~is sharp for this example. 
\end{example}

\subsubsection{Asymmetric-orthogonality constant}
%
We now construct an alternative a priori bound for best approximations (compare with Proposition~\ref{prop:ba_apriori_bound}). This bound is also a novel result, which is of independent interest. The describe the bound, we introduce the following new geometric constant.
\begin{definition}[Asymmetric-orthogonality constant]\label{def:AOconstant}
Let $\mbbY$ be a normed vector space with~$\dim \mbbY \ge 2$. The asymmetric-orthogonality constant is defined by:
\begin{alignat}{2}\label{eq:Lambda}
\CAO(\mbbY):= \sup_{
\substack{
  (z_0,z)\in \mathcal O_{\mbbY}
  \\
  z_0^*\in \mathcal J_\mbbY(z_0)
}}
   \frac{\dual{z_0^*,z}_{\mbbY^*,\mbbY}}
  {\norm{z}_\mbbY\norm{z_0}_\mbbY}\,,
\end{alignat} 
where the above supremum is taken over the set~$\mathcal O_\mbbY$ consisting of all pairs~$(z_0,z)$ which are \emph{orthogonal} in the following sense : 
\begin{alignat}{2}
\label{eq:orthoset}
\mathcal O_{\mbbY}:=\Big\{(z_0,z)\in \mbbY\times\mbbY : 
\exists\, z^*\in\mathcal J_\mbbY(z) \hbox{ satisfying }  \dual{z^*,z_0}_{\mbbY^*,\mbbY}=0\Big\}. 
\end{alignat}
\end{definition}
As in the case of~$\CBM(\mbbY)$, $\CAO(\mbbY)$ only makes sense when~$\dim \mbbY \ge 2$. Therefore as before, whenever~$\CAO(\cdot)$ is written, we assume this to be the case.
\par
\begin{remark}[Elementary properties of $\CAO$]
\label{rem:LambdaV}
The constant~$\CAO(\mbbY)$ is a \emph{geometric} constant since, it measures the degree to which the orthogonality relation~\eqref{eq:orthoset} fails to be symmetric. 
Using the Cauchy--Schwartz inequality it is easy to see that $0\leq \CAO(\mbbY)\leq 1$. 
If~$\mbbY$ is a Hilbert space, then $\CAO(\mbbY)=0$, since the single-valued duality map $J_\mbbY(\cdot)$ coincides with the self-adjoint Riesz map, and $\dual{J_\mbbY(\cdot),\cdot}_{\mbbY^*,\mbbY}$ coincides with the (symmetric) inner product in~$\mbbY$. On the other hand, the maximal value $\CAO(\mbbY) = 1$ holds for example for $\mbbY = \ell_1(\realset^2)$. Indeed taking ${z_0=(1,-1)}$ and $z = (\alpha,1)$, with $\alpha>0$, then $(2,-2)\in \mathcal J_\mbbY(z_0)$ and ${(1+\alpha,1+\alpha)\in\mathcal J_\mbbY(z)}$, so that upon taking $\alpha \rightarrow +\infty$ one obtains $\dual{z_0^*,z}_{\mbbY^*,\mbbY} / (\norm{z_0}_{\mbbY} \norm{z}_{\mbbY} ) \rightarrow 1$.
\end{remark}
\begin{example}[$\CAO(\ell_p)$]
\label{ex:CAOlp}
Consider the Banach space $\ell_p \equiv \ell_p(\mathbb R^2)$ with $1<p<+\infty$ (i.e., $\mbbR^2$ endowed with the $p$-norm). In this case the duality map is given by:
$$
\bigdual{J_{\ell^p}(x_1,x_2),(y_1,y_2)}_{(\ell_p)^*,\ell_p}=
\bignorm{(x_1,x_2)}_{\ell_p}^{2-p}
\sum_{i=1}^2|x_i|^{p-1}\sign(x_i)\,y_i\,,
$$
for all $(x_1,x_2),(y_1,y_2)\in\mathbb R^2$. 
By homogeneity of the duality map, the supremum in definition \eqref{eq:Lambda} can be taken over a normalized set (with unitary elements), in which case the computation of  $\CAO({\ell_p})$ is derived from the following constrained maximization problem:
\begin{alignat*}{2}
\max \quad  |x_1|^{p-2}x_1y_1 + |x_2|^{p-2}x_2y_2\,,
\quad 
\text{subject to } & \begin{cases}
|x_1|^p+|x_2|^p=1\,,\\
 |y_1|^p+|y_2|^p=1\,,\\
 |y_1|^{p-2}x_1y_1 + |y_2|^{p-2}x_2y_2=0\,.
 \end{cases}
\end{alignat*}
%
Using polar coordinates, the above constraints, and some symmetries, it is possible to reduce the above problem to the following one dimensional maximization:
$$
\CAO({\ell_p})=\max_{\theta \in [0,{\pi\over2}]}
{\big|(\cos\theta)^{p\over q}(\sin\theta)^{q\over p}-(\sin\theta)^{p\over q}(\cos\theta)^{q\over p}\big|\over\|(\cos\theta,\sin\theta)\|_p^{p\over q}\|(\cos\theta,\sin\theta)\|_q^{q\over p}}\,,
$$
where $q={p\over p-1}$. 
Observe that $\CAO({\ell_p})=\CAO({\ell_q})$ since the formula remains the same by switching the roles of $p$ and $q$ (cf.~Lemma~\ref{lem:Lambda_properties}). One can also show that $\CAO({\ell_1})=\CAO({\ell_\infty})=1$. 
Figure~\ref{fig:bounds} shows the dependence of~$\CAO({\ell_p})$ versus~$p-1$. It also illustrates the Banach--Mazur constant $\CBM({\ell_p})$ and the best-approximation projection constant $C_{\mathrm{best}}({\ell_p}):= \max_{u\in \ell_p(\mathbb{R}^2)} \norm{u_n}/\norm{u}$, with $u_n$ the best approximation to~$u$ on the worst 1-dimensional subspace of~$\ell_p(\mathbb{R}^2)$. The figure shows that
\begin{alignat*}{2} 
 C_{\mathrm{best}}(\ell_p) < \CBM(\ell_p) <  1 + \CAO({\ell_p})
\end{alignat*}
except for~$p=1$, $2$ and~$+\infty$, for which they coincide. 
\end{example}
\begin{figure}
\begin{center}
{\small
\psfrag{1+Lamb}{\scriptsize $1{+}\CAO({\ell_p})$}
\psfrag{CBM}{\scriptsize $\CBM(\ell_p)$}
\psfrag{CLp}{\scriptsize $C_{\mathrm{best}}(\ell_p)$}
\psfrag{p-1}{$p{-}1$}
\includegraphics[scale=1]{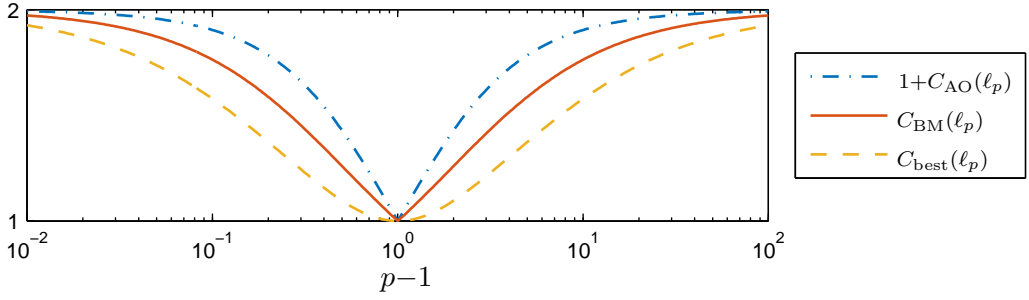}
}
\caption{Three different geometric constants and its dependence on $p-1$.}
\label{fig:bounds}
\end{center}
\end{figure}
\par
We conclude our discussion of~$\CAO$ with a Lemma
describing three important properties that are going to be used later in Section~\ref{sec:apriori}. 
\begin{lemma}[$\CAO$ in reflexive smooth setting]
\label{lem:Lambda_properties}
Assume the reflexive smooth setting where $\mbbY$ and $\mbbY^*$ are strictly convex and reflexive Banach spaces (see Section~\ref{sec:dualMapSmooth}). The following properties hold true:
\begin{enumerate}[(i)]
\item 
$\CAO(\mbbY)=\displaystyle\sup_{(z_0,z)\in \mathcal O_{\mbbY}}
   \frac{\dual{J_\mbbY(z_0),z}_{\mbbY^*,\mbbY}}
  {\norm{z}_\mbbY\norm{z_0}_\mbbY}\,$,
\par where $\mathcal O_{\mbbY}=\big\{(z_0,z)\in\mbbY\times\mbbY:\dual{J_\mbbY(z),z_0}_{\mbbY^*,\mbbY}=0 \big\}$.
\item $\CAO({\mbbY^*})=\CAO(\mbbY)\,$.
\item $\CAO(\mbbM)\leq \CAO(\mbbY)\,$, for any closed subspace $\mbbM\subset\mbbY$ with~$\dim \mbbM \ge 2$ endowed with the norm~$\norm{\cdot}_\mbbY$. 
\end{enumerate}
\end{lemma}
\begin{proof}
See Section~\ref{sec:CAO}.
\end{proof}
Note that result~(ii) and (iii) in Lemma~\ref{lem:Lambda_properties} actually imply that 
\begin{alignat}{2}
\label{eq:CAOM=CAOY}
  \CAO(\mbbM) = \CAO(\mbbY)\,,
\end{alignat}
because $\CAO(\mbbY) = \CAO(\mbbY^*) \le \CAO(\mbbM^*) = \CAO(\mbbM) \le \CAO(\mbbY)$.
\begin{example}[$\CAO(L^p)$]
Let $\Omega\subset\mathbb R^d$ be an open set and consider the Banach space $\mathbb Y:=L^p(\Omega)$, $1<p<+\infty$. Let $\Omega_1$ and $\Omega_2$ be two open bounded disjoint subsets. 
Define the functions $f_i\in L^p(\Omega)$ ($i=1,2$) by 
$f_i:=|\Omega_i|^{-{1\over p}}\mathbbm 1_{\Omega_i}$ and let 
$\mathbb M:=\hbox{span}\{f_1,f_2\}\subset\mathbb Y$. 
It is easy to see that $\mathbb M$ is isometrically isomorphic to
$\ell_p(\mathbb R^2)$ and thus, using~\eqref{eq:CAOM=CAOY}, we have
\begin{alignat*}{2}
\CAO({\ell_p})= \CAO(\mathbb M) = \CAO({L^p})\,.
\end{alignat*}
\end{example}
\subsubsection{A~priori bound~II}
The second a~priori bound for best approximations is based on the equivalent characterization given in Theorem~\ref{thm:minimizer_characterization} and the asymmetric-orthogonality constant.
\begin{proposition}[Best approximation: A priori bound~II]
\label{prop:improvedApriori}
Let $\mbbY$ be a Banach space, $y\in\mbbY$ and $\mbbM\subset\mbbY$ a closed subspace. Let $y_0\in \mbbM$ be such that $\|y-y_0\|_\mbbY\leq \|y-z_0\|_\mbbY$, for all $z_0\in\mbbM$. Then $y_0$ satisfies the a priori bound: 
\begin{alignat}{2}
\|y_0\|_\mbbY\leq\big(1+\CAO(\mbbY)\big)\|y\|_\mbbY\,,
\end{alignat} 
where $\CAO(\mbbY)\in[0,1]$ is the {asymmetric-orthogonality} constant of $\mbbY$ (see Definition~\ref{def:AOconstant}).  
\end{proposition}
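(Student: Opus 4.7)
The plan is to combine the characterization of best approximations via the duality mapping (Theorem~\ref{thm:minimizer_characterization}) with the definition of $\CAO(\mbbY)$, and then exploit that $0 \in \mbbM$.

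First, assume $y_0 \neq 0$ (otherwise the bound is trivial). By Theorem~\ref{thm:minimizer_characterization}, there exists $y^{\ast}\in \mcJ_\mbbY(y-y_0)$ annihilating $\mbbM$. In particular, $\langle y^{\ast}, y_0 \rangle_{\mbbY^{\ast},\mbbY}=0$, so the pair $(y_0,y-y_0)$ belongs to the orthogonality set $\mcO_\mbbY$ in~\eqref{eq:orthoset}. A small but important observation is that $\mcO_\mbbY$ is invariant under flipping the sign of the second entry: indeed, by homogeneity of $\mcJ_\mbbY$, if $y^\ast \in \mcJ_\mbbY(y-y_0)$ then $-y^\ast \in \mcJ_\mbbY(-(y-y_0))$, and clearly $\langle -y^{\ast}, y_0 \rangle=0$ as well. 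Hence the definition of $\CAO(\mbbY)$ yields
\begin{equation*}
\bigsnorm{\dual{y_0^{\ast}, y-y_0}_{\mbbY^{\ast},\mbbY}} \leq \CAO(\mbbY)\,\norm{y-y_0}_\mbbY\,\norm{y_0}_\mbbY \qquad \forall\, y_0^{\ast} \in \mcJ_\mbbY(y_0),
\end{equation*}
where the absolute value is obtained from the sign-flip argument just mentioned.

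Next, pick any $y_0^{\ast}\in \mcJ_\mbbY(y_0)$, split $y_0 = y - (y-y_0)$, and apply $y_0^\ast$:
\begin{equation*}
\norm{y_0}_\mbbY^2 = \dual{y_0^{\ast}, y_0}_{\mbbY^{\ast},\mbbY} = \dual{y_0^{\ast}, y}_{\mbbY^{\ast},\mbbY} - \dual{y_0^{\ast}, y-y_0}_{\mbbY^{\ast},\mbbY}.
\end{equation*}
For the first term, use $\norm{y_0^{\ast}}_{\mbbY^{\ast}}=\norm{y_0}_\mbbY$ to deduce $\dual{y_0^\ast,y}\leq \norm{y_0}_\mbbY \norm{y}_\mbbY$; for the second term, apply the $\CAO$-bound above. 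Dividing through by $\norm{y_0}_\mbbY$ gives
\begin{equation*}
\norm{y_0}_\mbbY \leq \norm{y}_\mbbY + \CAO(\mbbY)\,\norm{y-y_0}_\mbbY.
\end{equation*}
Finally, since $0\in \mbbM$ is an admissible competitor, the best-approximation property yields $\norm{y-y_0}_\mbbY \leq \norm{y}_\mbbY$, and the desired estimate follows.

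The only delicate point is the sign-flip observation that upgrades the one-sided supremum defining $\CAO(\mbbY)$ into an absolute-value bound; without this, one would only control the positive part of $\dual{y_0^{\ast},y-y_0}$, which is the wrong sign for the identity $\norm{y_0}_\mbbY^2 = \dual{y_0^{\ast},y} - \dual{y_0^{\ast},y-y_0}$.
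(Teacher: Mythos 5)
Your proof is correct and follows essentially the same route as the paper: the duality-map characterization (Theorem~\ref{thm:minimizer_characterization}) puts $(y_0,\,y-y_0)$ in $\mathcal{O}_{\mbbY}$, the splitting $y_0=y-(y-y_0)$ is tested with $y_0^*\in\mathcal{J}_\mbbY(y_0)$, and the competitor $0\in\mbbM$ gives $\|y-y_0\|_\mbbY\le\|y\|_\mbbY$ to close the bound. Your sign-flip observation, using homogeneity of $\mathcal{J}_\mbbY$ to control $-\dual{y_0^*,y-y_0}_{\mbbY^*,\mbbY}$ by $\CAO(\mbbY)\,\|y-y_0\|_\mbbY\|y_0\|_\mbbY$, makes explicit a detail that the paper's proof uses only implicitly.
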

\begin{proof} If $y_0=0$ or $y_0=y$, then the result is obvious. Hence, let us assume that $\|y_0\|_\mbbY>0$ and $\|y-y_0\|_\mbbY>0$. First of all, we estimate the error using the minimizing property of $y_0\in\mbbM$ :
\begin{alignat}{2}\label{eq:estRes}
\|y-y_0\|_\mbbY\leq \|y-0\|_\mbbY=\|y\|_\mbbY\,.
\end{alignat}
Next, by Theorem~\ref{thm:minimizer_characterization}, there exists a~$z^*\in \mathcal J_\mbbY(y-y_0)$ which annihilates~$\mbbM$. Therefore, $(y_0,y-y_0)\in \mathcal O_{\mbbY}$, and we thus obtain for any $z_0^*\in\mathcal J_\mbbY(y_0)$: 
\begin{alignat*}{2}
\|y_0\|_\mbbY &= \displaystyle{\dual{z_0^*,y_0}_{\mbbY^*,\mbbY}\over \|y_0\|_\mbbY}
\\ & =
{\dual{z_0^*,y}_{\mbbY^*,\mbbY}\over \|y_0\|_\mbbY}
-{\dual{z_0^*,y-y_0}_{\mbbY^*,\mbbY}\over \|y_0\|_\mbbY}
\\ &\le
 \|y\|_\mbbY 
 -{\dual{z_0^*,y-y_0}_{\mbbY^*,\mbbY}\over \|y_0\|_\mbbY \|y-y_0\|_\mbbY}
 \|y-y_0\|_\mbbY 
\\ &\le  \|y\|_\mbbY + \CAO(\mbbY) \|y-y_0\|_{\mbbY}\,.
\end{alignat*}
Conclude by using the estimate in~\eqref{eq:estRes}.
\end{proof}
%
%
\subsection{Proof of Lemma~\ref{lem:I-P}}\label{sec:proof_lemma_I-P}
In this section, we prove Lemma~\ref{lem:I-P}.
\par
The inequality $\|I-Q\|\leq 1+\|Q\|=(1+\|Q\|^{-1})\|Q\|$ is trivial, so we focus our attention in showing that 
$$
\|y-Q(y)\|_\mbbY\leq C_{\hbox{\tiny BM}}(\mbbY)\,\|Q\|\,\|y\|_\mbbY\,,
\qquad\forall y\in\mbbY.
$$
If $y-Q(y)=0$ the result holds true immediately.
On the other hand, by requirement~(\ref{NP:P}), since $0\neq Q=Q\circ Q$, we have that $\|Q\|\geq 1$. Moreover, $C_{\hbox{\tiny BM}}(\mbbY)\geq 1$ (see Remark~\ref{rem:BM}).  Thus, if $Q(y)=0$, then 
$$  
\|y-Q(y)\|_\mbbY =\|y\|_\mbbY \leq C_{\hbox{\tiny BM}}(\mbbY)\,\|Q\|\,\|y\|_\mbbY.
$$
Hence, we assume form now on that $y-Q(y)\neq 0$ and $Q(y)\neq 0$. 
\par
First of all observe that $y-Q(y)$ and $Q(y)$ are linearly independent. Indeed, suppose on the contrary that there exists $t\in\mathbb R \setminus \{0\}$ such that $y-Q(y)=t Q(y)$, then $y=(1+t)Q(y)$, hence applying~$Q$ and using  homogeneity (requirement~(\ref{NP:homog})), we get $t=0$ (a contradiction). 
\par
The proof follows next using a two-dimensional geometrical argument. Let us define $\mathbb W:=\hbox{span}\{Q(y),y-Q(y)\}$ and note that $\dim \mathbb W = 2$. Let $T:\mathbb W  \rightarrow \ell_2(\realset^2)$ be any linear isomorphism between $\mathbb{W}$ and $\ell_2(\realset^2)$ (the two-dimensional Euclidean vector space endowed with the norm~$\norm{\cdot}_2$). Define 
\begin{subequations}
\label{eq:alpha_beta}
\begin{alignat}{2}\label{eq:alpha_beta_a}
  0 \neq \alpha &:= \norm{T(y-Q(y))}_2 \,,
  \\ \label{eq:alpha_beta_b}
  0 \neq \beta &:= \norm{T Q(y) }_2\,,
\end{alignat}
\end{subequations}
and, subsequently, let~$\tilde{y}\in \mbbW$ be defined by
\begin{alignat}{2}\label{eq:tilde x}
  \tilde{y} := \frac{\alpha}{\beta} Q(y)+\frac{\beta}{\alpha} (y-Q(y)) \,.
\end{alignat}
The proof will next be divided into four steps:
\begin{itemize}
\item[(S1)] To show that $\norm{y-Q(y)}_\mathbb Y\leq \big(\|T\|\|T^{-1}\|\big)\,\norm{\frac{\alpha}{\beta} Q(y)}_\mbbY$\,\,.
\item[(S2)] To show that $\norm{\frac{\alpha}{\beta} Q(y)}_{\mbbY}\le \norm{Q}\norm{\tilde{y}}_\mbbY$\,\,.
\item[(S3)] To show that $\norm{\tilde y}_\mathbb Y\leq \big(\|T\|\|T^{-1}\|\big)\, \norm{y}_\mathbb Y$\,\,.
\item[(S4)] To conclude that $\norm{y-Q(y)}_\mathbb Y\leq \CBM(\mathbb Y)\, \|Q\|\,\norm{y}_\mathbb Y$\,\,.
\end{itemize}
\par 
Proof of~(S1):\; This follows from elementary arguments since~$\beta \neq 0$:
\begin{alignat*}{2}
 \norm{y-Q(y)}_\mbbY &\le \norm{T^{-1}} \norm{T(y-Q(y))}_2
\\\tag{by~\eqref{eq:alpha_beta_a}}
 &= \norm{T^{-1}} \, {\alpha}
 \\\tag{by~\eqref{eq:alpha_beta_b}}
 &= \norm{T^{-1}} \, \frac{\alpha}{\beta} \norm{TQ(y)}_2
  \\
 &\leq \norm{T^{-1}} \,\|T\| \Big|\frac{\alpha}{\beta} Q(y)\Big|_\mbbY
 \,.
\end{alignat*}
\par
Proof of~(S2):\;
Use requirement~(\ref{NP:GOP}) with~$\eta = \frac{\beta^2}{\alpha^2}$, and subsequently requirements~(\ref{NP:homog}) and~(\ref{NP:bounded}), to obtain:
\begin{alignat}{3}\label{eq:P(tilde x)}
\Bignorm{ \frac{\alpha}{\beta} Q(y) }_\mbbY
= \Bignorm{
\frac{\alpha}{\beta} Q\Big(
 Q(y) + \frac{\beta^2}{\alpha^2} (I-Q)(y) 
\Big)
}_\mbbY
= \bignorm{Q(\tilde{y})}_\mbbY
\leq \norm{Q} \norm{\tilde{y}}_\mbbY\,.
\end{alignat}
\par 
Proof of (S3):\; 
The key point here is to observe that $\norm{T \tilde{y}}_2 = \norm{T y}_2$, indeed,
\begin{alignat*}{2}
  \norm{T \tilde{y}}_2^2 
  &= \Bignorm{ \frac{\alpha}{\beta} T Q(y)+\frac{\beta}{\alpha} T (y-Q(y))}_2^2\tag{by~\eqref{eq:tilde x}} 
\\\tag{by~\eqref{eq:alpha_beta}} 
  &=  \alpha^2 + 2\, T Q(y)\cdot T(y-Q(y)) + \beta^2 
\\\tag{by~\eqref{eq:alpha_beta}}
  &= \Bignorm{ T (y-Q(y)) + T Q(y)}_2^2 
\\
  &= \norm{T y}_2^2\,\,.
\end{alignat*}
Therefore,
\begin{alignat*}{2}
\norm{\tilde{y}}_\mbbY
 \le \norm{T^{-1}}\, \norm{T \tilde{y}}_2
 = \norm{T^{-1}}\, \norm{T y}_2 
 \le \norm{T^{-1}}\, \norm{T}\, \norm{y}_\mbbY\,.
\end{alignat*}
\par 
Proof of (S4):\; 
Combining~(S1)--(S3) we get 
\begin{alignat*}{2}
\|y-Q(y)\|_\mbbY\leq\big(\|T\|\|T^{-1}\|\big)^2\,\|Q\|\,\|y\|_\mbbY\,.
\end{alignat*}
Finally, taking the infimum over all linear isomorphisms $T:\mathbb W\to \ell_2(\realset^2)$ we obtain
\begin{alignat*}{2}
\norm{y-Q(y)}_\mbbY\leq \left(\dBM(\mathbb W,\ell_2(\realset^2))\right)^2\,\|Q\|
\,\norm{y}_\mbbY\leq \CBM(\mathbb Y)\,\|Q\| \,\norm{y}_\mathbb Y\,.
\end{alignat*}
~\hfill%
$\ensuremath{_\blacksquare}$

\subsection{Proof of Lemma~\ref{lem:Lambda_properties}}
\label{sec:CAO}
%
In this section we prove Lemma~\ref{lem:Lambda_properties}. 
\par
The reflexive smooth setting ensures that the duality mappings are single-valued bijections $J_\mbbY:\mbbY\to\mbbY^*$ and 
$J_{\mbbY^*}:\mbbY^*\to\mbbY^{**}$. Moreover, $J_{\mbbY^*}=J^{-1}_\mbbY$ by canonical identification.
\par Property (i) is a direct consequence of the definition of the constant $\CAO(\mbbY)$ (see~\eqref{eq:Lambda}) and the fact that the duality mapping is single-valued, i.e., $\mcJ_\mbbY(y) = \{J_\mbbY(y)\}$, for all $y\in\mbbY$.
\par To prove property (ii), we make use of property (i) replacing $\mbbY$ by $\mbbY^*$. We get 
$$
\CAO({\mbbY^*})=\sup_{(z^*,z_0^*)\in\mathcal O_{\mbbY^*}}\frac{\dual{J_{\mbbY^*}(z^*),z_0^*}_{\mbbY^{**},\mbbY^*}}
  {\norm{z_0^*}_{\mbbY^*}\norm{z^*}_{\mbbY^*}}
  =\sup_{(z^*,z_0^*)\in\mathcal O_{\mbbY^*}}\frac{\dual{z_0^*,J^{-1}_{\mbbY}(z^*)}_{\mbbY^{*},\mbbY}}
  {\norm{z_0^*}_{\mbbY^*}\norm{z^*}_{\mbbY^*}}.$$
Defining $z=J^{-1}_\mbbY(z^*)$ and $z_0=J^{-1}_\mbbY(z_0^*)$ we obtain
\begin{equation}\label{eq:LambdaDual}
\CAO({\mbbY^*})=\sup_{(z^*,z_0^*)\in\mathcal O_{\mbbY^*}}\frac{\dual{J_{\mbbY}(z_0),z}_{\mbbY^{*},\mbbY}}
  {\norm{z_0}_{\mbbY}\norm{z}_{\mbbY}}.
\end{equation}
Now observe that
\begin{alignat*}{2} 
\mathcal O_{\mbbY^*}&=  \big\{(z^*,z_0^*)\in\mbbY^*\times\mbbY^*:\dual{J_{\mbbY^*}(z_0^*),z^*}_{\mbbY^{**},\mbbY^*}=0 \big\}
\\
&= \big\{(J_\mbbY(z),J_\mbbY(z_0))\in\mbbY^*\times\mbbY^*:\dual{J_{\mbbY}(z),z_0}_{\mbbY^{*},\mbbY}=0 \big\}
\\
&=  \big\{(J_\mbbY(z),J_\mbbY(z_0))\in\mbbY^*\times\mbbY^*:(z_0,z)\in \mathcal O_\mbbY \big\}.
\end{alignat*}
Hence the supremum in~\eqref{eq:LambdaDual} can be taken over all $(z_0,z)\in \mathcal O_\mbbY$ and thus $\CAO({\mbbY^*})=\CAO(\mbbY)\,$.  
\par For the last property (iii) we make use of Lemma~\ref{lem:IJI} to show that
$$
\CAO(\mbbM)=\sup_{(z_0,z)\in\mathcal O_\mbbM}
\frac{\bigdual{J_\mbbM(z_0),z}_{\mbbM^*,\mbbM}}{\|z\|_\mbbY\|\|z_0\|_\mbbY}=\sup_{(z_0,z)\in\mathcal O_\mbbM}
\frac{\bigdual{J_\mbbY(I_\mbbM z_0),I_\mbbM z}_{\mbbY^*,\mbbY}}{\|z\|_\mbbY\|\|z_0\|_\mbbY}\, .
$$
To conclude, it remains to show that $\CAO(\mbbY)$ takes the supremum over a larger set (i.e., $I_\mbbM \mathcal O_\mbbM\subset
\mathcal O_\mbbY$). Indeed, if $(z_0,z)\in\mathcal O_\mbbM$, then $(I_\mbbM z_0,I_\mbbM z)\in \mbbY\times\mbbY$ and 
$$
\bigdual{J_\mbbY(I_\mbbM z),I_\mbbM z_0}_{\mbbY^*,\mbbY}=\bigdual{J_\mbbM( z),  z_0}_{\mbbM^*,\mbbM}=0\,,
$$
by Lemma~\ref{lem:IJI}. Hence $(I_\mbbM z_0,I_\mbbM z)\in\mathcal O_\mbbY$.
~\hfill%
$\ensuremath{_\blacksquare}$

\section{Residual minimization, nonlinear Petrov--Galerkin and monotone mixed formulation}
\label{sec:NPGMMM}
In this section, we analyze the residual minimization method~\eqref{eq:introResMin} and characterize its solution by means of the duality mapping.
The characterization will give rise to a nonlinear Petrov--Galerkin discretization and corresponding mixed formulation. The inexact version of this method is the subject of Section~\ref{sec:practical}.
%
\subsection{Equivalent best-approximation problem}
To carry out the analysis, we reformulate~\eqref{eq:introResMin} as an equivalent best-approximation problem and apply the classical theory of Section~\ref{sec:bestAppr}. 
%
%
Let us introduce the norm
\begin{alignat*}{2}
  \|\cdot\|_{\mbbE}:=\|B(\cdot)\|_{\mbbV^\ast}\,,
\end{alignat*}
which, in some applications, is referred to as the \emph{energy norm} on $\mbbU$. Since $B$ is continuous and bounded below, it is clear that 
$\|\cdot\|_{\mbbE}$ is an equivalent norm on~$\mbbU$; see~\eqref{eq:normEquiv}.
%
\par
Let us recall that existence of a unique solution to~\eqref{eq:Bu=f} is guaranteed for continuous and bounded-below~$B$, if $f\in \Image B$ or if~$\Kernel B^* = \{0\}$ (in which case $B$~is surjective); see, e.g.,~\cite[Appendix~A.2]{ErnGueBOOK2004} or~\cite[Section~5.17]{OdeDemBOOK2010}. Therefore, supposing $f\in \Image{B}$, then upon substituting $f=Bu$, \eqref{eq:introResMin} is equivalent to finding a best approximation~$u_n\in \mbbU_n$ to~$u$ measured by the energy norm: 
\begin{empheq}[left=\left\{\;,right=\right.,box=]{alignat=2} 
\notag 
& \text{Find } u_n\in \mbbU_n :
\\ 
\label{eq:energy_minimization}
& \quad u_n = \arg\min_{w_n\in \mbbU_n} \norm{u-w_n}_{\mbbE}\,\,.
\end{empheq}
%
\subsection{Analysis of residual minimization}
The main result for the residual-minimization method~\eqref{eq:introResMin} now follows from the classical Theorem~\ref{thm:minimizer_characterization} for best approximations, while novel a~priori bounds follow from Propositions~\ref{prop:ba_apriori_bound} and~\ref{prop:improvedApriori}.
%
\begin{theorem}[Residual minimization]
\label{thm:resMin}
Let $\mbbU$ and $\mbbV$ be two Banach spaces and let $B:\mbbU\to \mbbV^*$ be a linear, continuous and bounded-below operator with continuity constant $M_B>0$ and bounded-below constant $\gamma_B>0$. Given $f\in \mbbV^*$ and a finite-dimensional subspace
$\mbbU_n\subset \mbbU$, the following statements hold: 
\begin{enumerate}[(i)]
\item
There exists a residual minimizer $u_n\in \mbbU_n$ such that: 
\begin{alignat}{2}\label{eq:resmin}
u_n=\arg\min_{w_n\in\mbbU_n}\|f-Bw_n\|_{\mbbV^*}\,\,. 
\end{alignat}
\item
Any residual minimizer $u_n$ of \eqref{eq:resmin} satisfies the a~priori bounds 
\begin{subequations}
\begin{alignat}{2}
\label{eq:est1}
\norm{u_n}_\mbbU
& \leq \frac{\CBM(\mathbb V^*)}{\gamma_B} \norm{f}_{\mbbV^*}\,\,,
\\
\label{eq:alternative_apriori_bound}
\norm{u_n}_\mbbU 
&\leq {\big(1+\CAO(\mbbV)\big)\over \gamma_B}
\norm{f}_{\mbbV^*}\,\,.
\end{alignat}
\end{subequations}
where $\CBM(\mbbV^*)\in [1,2]$ is the Banach-Mazur constant of~$\mbbV^*$ and $\CAO(\mbbV)\in [0,1]$ is the asymmetric-orthogonality constant of~$\mbbV$ (see Definitions~\ref{def:Banach-Mazur} and~\ref{def:AOconstant}).
\item 
If $\mbbV^*$ is a strictly-convex Banach space, then the residual minimizer $u_n$ of \eqref{eq:resmin} is unique.
\item
If $f\in \Image(B)$ and $u\in \mbbU$ is the solution of the problem
$Bu=f$, then we have the a~posteriori and a~priori error estimates:
\begin{equation}\label{eq:cea}
\norm{u-u_n}_\mbbU\leq \frac{1}{\gamma_B}\norm{f-Bu_n}_{\mbbV^*}\leq
\frac{M_B}{\gamma_B}\inf_{w_n\in \mbbU_n}\|u-w_n\|_\mbbU.
\end{equation}
\end{enumerate}
\end{theorem}
%
\begin{proof}
%
%
%
We first consider the case that $f\in \Image(B)$, in which case $Bu=f$. 
\par
The proof of parts (i), (iii) and (iv) can be found in Guermond~\cite{GueSINUM2004}, but we present an alternative based on Theorem~\ref{thm:minimizer_characterization}. Since $\mathbb U$ endowed with the energy norm is a Banach space, the first statement is a direct application of Theorem~\ref{thm:minimizer_characterization}(i) by using 
the energy norm topology in $\mathbb U$ and the equivalence between \eqref{eq:introResMin} and~\eqref{eq:energy_minimization}. 
%
%
%
If $\mbbV^*$ is strictly convex, then $\mathbb U$ endowed with the energy norm is also strictly convex. Hence, by \ref{thm:minimizer_characterization}(ii) the minimizer $u_n\in\mathbb U_n$ is unique, which proves the third statement. 
Finally, using the norm equivalence~\eqref{eq:normEquiv}, together with the minimizing property of $u_n$ in the energy norm, we get
$$
\norm{u-u_n}_\mbbU\leq \frac{1}{\gamma_B}\norm{u-u_n}_{\mbbE}\leq \frac{1}{\gamma_B}\norm{u-w_n}_{\mbbE}\leq \frac{M_B}{\gamma_B}\norm{u-w_n}_\mbbU\,,$$
for all $w_n\in \mbbU_n$, which proves the last statement.
\par
We now prove part~(ii). The bound provided by Proposition~\ref{prop:ba_apriori_bound} shows that
$$\norm{u_n}_\mbbU \leq \frac{1}{\gamma_B}\norm{u_n}_\mbbE 
\le 
\frac{\CBM(\mbb V^*)}{\gamma_B}\norm{u}_\mbbE
= 
\frac{\CBM(\mbb V^*)}{\gamma_B}\norm{f}_{\mbb V^*}\,\,,$$
which proves~\eqref{eq:est1}. A similar argument based on Proposition~\ref{prop:improvedApriori} proves~\eqref{eq:alternative_apriori_bound} (using also Lemma~\ref{lem:Lambda_properties}(ii)).
\par
The proof for general~$f\in \mbbV^*$, including the case that $f\notin \Image(B)$, follows similarly by considering the best approximation of $f$ in the space~$B\mbbU_n$, and using that any~$g_n\in B\mbbU_n$ has a unique~$w_n \in \mbbU_n$ such that $B w_n = g_n$.
\end{proof}
\begin{remark}[Finite element methods]\label{rem:FEM}
In the context of finite elements, there is a \emph{sequence} $\{\mbbU_h\}_{h>0}$ of finite-dimensional subspaces, $\mbbU_h \subset \mbbU$, having the approximation property
$$
\inf_{w_h\in \mbbU_h}\|w-w_h\|_\mbbU\leq \varepsilon(h)\norm{w}_\mbbZ
\,,\qquad
\forall w\in \mbbZ\,,
$$ 
where $\mbbZ\subset \mbbU$ is a more regular subspace and $\varepsilon(h)$ is a function that is continuous at zero and~$\varepsilon(0) = 0$. %
%
%
This last statement, together with~\eqref{eq:cea}, gives a  guarantee that minimizers $u_n\in \mbbU_n \equiv \mbbU_h$ of~\eqref{eq:resmin} converge to $u=B^{-1}f$ upon $h\rightarrow 0^+$. 
\end{remark}

\begin{remark}[Optimal test-space norm]
\label{rem:Vopt}
As proposed in~\cite{ZitMugDemGopParCalJCP2011} (cf.~\cite{DahHuaSchWelSINUM2012}), if $B$~is a linear bounded bijective operator and~$\mbbV$ reflexive (hence $B^*:\mbbV\rightarrow \mbbU^* $ is bijective), one can endow the space~$\mbbV$ with the equivalent \emph{optimal} norm
\begin{alignat*}{2}
  \norm{\cdot}_{\mbbV_{\mathrm{opt}}} = \norm{ B^*(\cdot) }_{\mbbU^*}
  \,.
\end{alignat*}
Then, residual minimization in $(\mbbV_{\mathrm{opt}})^*$ reduces precisely to best approximation of~$u$ measured in~$\norm{\cdot}_\mbbU$. In particular, instead of~\eqref{eq:cea}, one then obtains
\begin{alignat*}{2}
\norm{u-u_n}_\mbbU
 = \norm{f-Bu_n}_{(\mbbV_{\mathrm{opt}})^*}
 = \inf_{w_n\in \mbbU_n}\|u-w_n\|_\mbbU\, .
\end{alignat*}
\end{remark}
\subsection{Characterization of residual minimization}
The first characterization for residual minimizers is given in general Banach spaces:%
\begin{proposition}[Characterization of residual minimization]
\label{thm:characterization1}
Let $\mbbU$ and $\mbbV$ be two Banach spaces and let $B:\mbbU \to\mbbV^*$ be a linear, continuous and bounded-below operator. Given $f\in \mbbV^*$ and a finite-dimensional subspace
$\mbbU_n\subset \mbbU$, an element $u_n\in \mbbU_n$ is a solution of the residual minimization problem~\eqref{eq:introResMin}, if and only if there is an~$r^{\ast\ast}\in\mathcal{J}_{\mbbV^*}(f-Bu_n)\subset \mbbV^{\ast\ast}$ satisfying:
\begin{equation}\label{eq:NPG1}
\<r^{\ast\ast}, B w_n\>_{\mbbV^{\ast\ast},\mbbV^\ast}=0,
\quad\forall w_n\in \mbbU_n.
\end{equation}
\end{proposition}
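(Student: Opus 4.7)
The plan is to reduce the statement directly to the best-approximation characterization in Theorem~\ref{thm:minimizer_characterization}, applied in the dual space $\mbbV^*$ to the finite-dimensional subspace $B\mbbU_n$. The underlying observation is that residual minimization over $\mbbU_n$ is nothing but best approximation of~$f$ in~$B\mbbU_n \subset \mbbV^*$.

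First I would verify the setup needed to invoke Theorem~\ref{thm:minimizer_characterization}. Since $\mbbU_n$ is finite-dimensional and $B:\mbbU\to\mbbV^*$ is bounded below (hence injective and with closed range on any subspace), the image $B\mbbU_n$ is a closed, finite-dimensional subspace of the Banach space $\mbbV^*$, and the map $w_n \mapsto Bw_n$ is a linear bijection between $\mbbU_n$ and $B\mbbU_n$. Consequently,
\begin{alignat*}{2}
 \min_{w_n \in \mbbU_n} \norm{f - B w_n}_{\mbbV^*}
 \;=\; \min_{z_0 \in B\mbbU_n} \norm{f - z_0}_{\mbbV^*},
\end{alignat*}
and $u_n \in \mbbU_n$ solves~\eqref{eq:residual_minimization} if and only if $z_0 := B u_n \in B\mbbU_n$ is a best approximation of~$f$ in~$B\mbbU_n$ relative to the norm of~$\mbbV^*$.

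Next I would apply Theorem~\ref{thm:minimizer_characterization} with $\mbbY = \mbbV^*$, $\mbbM = B\mbbU_n$, $y = f$ and $y_0 = B u_n$. The theorem guarantees that $B u_n$ is a best approximation of~$f$ in~$B\mbbU_n$ if and only if there exists a functional $r^{**} \in \mathcal{J}_{\mbbV^*}(f - B u_n) \subset \mbbV^{**}$ which annihilates $\mbbM = B\mbbU_n$, i.e.,
\begin{alignat*}{2}
 \bigdual{ r^{**}, z_0 }_{\mbbV^{**},\mbbV^*} = 0 \qquad \forall z_0 \in B\mbbU_n.
\end{alignat*}
Parametrizing $z_0 = B w_n$ with $w_n \in \mbbU_n$ gives exactly~\eqref{eq:NPG1}, completing the equivalence.

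I do not expect a real obstacle in this argument; the only point requiring care is checking that the hypotheses of Theorem~\ref{thm:minimizer_characterization} apply, namely closedness of $B\mbbU_n$ in~$\mbbV^*$ (supplied by finite-dimensionality) and the bijectivity of $B:\mbbU_n \to B\mbbU_n$ (supplied by the bounded-below hypothesis on~$B$). No reflexivity or strict-convexity assumption is needed here, which matches the generality of the statement.
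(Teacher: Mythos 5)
Your proposal is correct and follows essentially the same route as the paper: the paper's proof is a one-line application of Theorem~\ref{thm:minimizer_characterization} with $\mbbY=\mbbV^*$ and $\mbbM=B\mbbU_n$, which is exactly your reduction. Your additional remarks (closedness of $B\mbbU_n$ via finite-dimensionality, bijectivity of $B|_{\mbbU_n}$ from the bounded-below property, and that no reflexivity or strict convexity is needed) are accurate and simply make explicit what the paper leaves implicit.
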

\begin{proof}
Apply Theorem~\ref{thm:minimizer_characterization}(iii) to the minimization problem~\eqref{eq:introResMin}, using $\mbbY=\mbbV^*$ and $\mbbM=B\mbbU_n$.
\end{proof}
\par
Defining the discrete space $\mbbV^*_n:=B\mbbU_n$, we see that \eqref{eq:NPG1} can be interpreted as the nonlinear Petrov--Galerkin discretization: 
\begin{empheq}[left=\left\{\;,right=\right.,box=]{alignat=2} 
\notag 
& \text{Find } u_n\in \mbbU_n \hbox{ such that for some }
r^{\ast\ast}\in \mathcal{J}_{\mbbV^*}(f-Bu_n)\,,
\\ 
\label{eq:nonlinearPG}
& \quad \<r^{\ast\ast},\nu_n\>_{\mbbV^{\ast\ast},\mbbV^{\ast}}=0,\quad\forall \nu_n\in\mbbV^*_n\,.
\end{empheq}
Observe that $r^{\ast\ast}\in \mathcal{J}_{\mbbV^*}(f-Bu_n)$ must fulfill the 
set of \emph{nonlinear} equations:
\begin{alignat*}{2}
\|r^{\ast\ast}\|_{\mbbV^{\ast\ast}}^2=\<r^{\ast\ast},f-Bu_n\>_{\mbbV^{\ast\ast},\mbbV^{\ast}}=\|f-Bu_n\|^2_{\mbbV^{\ast}}\,.
\end{alignat*}
\par
In special Banach spaces, because of specific properties of $\mcJ_{\mbbV^*}$ depending on the geometry of~$\mbbV$, the above characterizations can be reduced to other forms. For example, if~$\mbbV$ is reflexive, then $f-Bu_n\in \mathcal{J}_{\mbbV}(r)$ for some $r\in\mbbV$ such that $\<Bw_n,r\>_{\mbbV^*,\mbbV}=0$, for all $w_n\in \mbbU_n$. To have more useful characterizations, we shall restrict to the 
%
\emph{reflexive smooth setting}.
Recall from Section~\ref{sec:dualMapSmooth} that in this setting the duality mapping in~$\mbbV$ is a single-valued and bijective map, denoted by $J_\mbbV:\mbbV\rightarrow \mbbV^*$. 
\begin{theorem}[Equivalent characterizations]
\label{thm:ref_smooth}
Let $\mbbU$ and $\mbbV$ be two Banach spaces and let $B:\mbbU \to\mbbV^*$ be a linear, continuous and bounded-below operator. Assume additionally that $\mbbV$ and $\mbbV^*$ are strictly convex and reflexive. Given $f\in \mbbV^*$ and a finite-dimensional subspace
$\mbbU_n\subset \mbbU$. The following statements are equivalent:

\begin{enumerate}[(i)]
\item $u_n\in \mbbU_n$ is the unique residual minimizer such that $$u_n=\arg\!\!\!\min_{w_n\in\mbbU_n}\|f-Bw_n\|_{\mbbV^*}\,\,.$$ 
%
\item $u_n\in \mbbU_n$ is the solution of the nonlinear Petrov--Galerkin formulation:
$$
\<\nu_n,J_\mbbV^{-1}(f-Bu_n)\>_{\mbbV^\ast,\mbbV}=0,\quad\forall \nu_n\in B\mbbU_n\,\,.
$$
\item 
There is a unique residual representation $r\in \mbbV$ such that $u_n\in \mbbU_n$ together with $r$ satisfy the semi-infinite monotone mixed formulation:
\begin{subequations}
\label{eq:mixed1}
\begin{empheq}[left=\left\{,right=\right.,box=]{alignat=3}
\label{eq:mixed1_a}
& \<{J_\mbbV}(r),v\>_{\mbbV^*,\mbbV}+\<Bu_n,v\>_{\mbbV^*,\mbbV} && =\<f,v\>_{\mbbV^*,\mbbV}, &\quad & \forall v\in \mbbV,
\\
\label{eq:mixed1_b}
& \<B^*r,w_n\>_{\mbbU^*,\mbbU} && =0, && \forall w_n\in \mbbU_n.
\end{empheq}
\end{subequations}
\item $u_n\in \mbbU_n$ is the Lagrange multiplier of the constrained minimization:
\begin{equation}\label{eq:constrainedminimization}
 \min_{v 
 \in (B\mbbU_n)^\perp} \frac{1}{2}\norm{v}^2_{\mbbV} - \dual{f,v}_{\mbbV^*,\mbbV}.
\end{equation}
\end{enumerate}
\end{theorem}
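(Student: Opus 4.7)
The plan is to establish the cycle of equivalences \textbf{(i)} $\Leftrightarrow$ \textbf{(ii)} $\Leftrightarrow$ \textbf{(iii)} $\Leftrightarrow$ \textbf{(iv)}, exploiting the fact that in the reflexive smooth setting the duality maps $J_\mbbV:\mbbV\to\mbbV^*$ and $J_{\mbbV^*}:\mbbV^*\to\mbbV^{**}$ are single-valued bijections satisfying $J_{\mbbV^*}=J_\mbbV^{-1}$ under canonical identification (see Section~\ref{sec:dualMapSmooth}). Uniqueness of $u_n$ already follows from Theorem~\ref{thm:resMin}(iii) since $\mbbV^*$ is strictly convex.

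For \textbf{(i)} $\Leftrightarrow$ \textbf{(ii)}, the plan is to invoke Proposition~\ref{thm:characterization1}: $u_n$ is a residual minimizer if and only if there exists $r^{**}\in\mcJ_{\mbbV^*}(f-Bu_n)$ with $\dual{r^{**},Bw_n}_{\mbbV^{**},\mbbV^*}=0$ for all $w_n\in\mbbU_n$. Single-valuedness of $\mcJ_{\mbbV^*}$ forces $r^{**}=J_{\mbbV^*}(f-Bu_n)=J_\mbbV^{-1}(f-Bu_n)\in\mbbV$ (via the canonical injection), and the orthogonality condition rewrites as the statement in (ii).

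For \textbf{(ii)} $\Leftrightarrow$ \textbf{(iii)}, I would introduce the auxiliary variable $r:=J_\mbbV^{-1}(f-Bu_n)\in\mbbV$, which is well-defined and unique because $J_\mbbV$ is bijective. Then by construction $J_\mbbV(r)+Bu_n=f$ in $\mbbV^*$, which is \eqref{eq:mixed1_a}, and (ii) rewrites as $\dual{Bw_n,r}_{\mbbV^*,\mbbV}=0$ for all $w_n\in\mbbU_n$, i.e. $\dual{B^*r,w_n}_{\mbbU^*,\mbbU}=0$, which is \eqref{eq:mixed1_b}. Conversely, from \eqref{eq:mixed1_a} one reads off $r=J_\mbbV^{-1}(f-Bu_n)$, so that \eqref{eq:mixed1_b} recovers (ii); uniqueness of $r$ then follows from uniqueness of $u_n$ and bijectivity of $J_\mbbV$.

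For \textbf{(iii)} $\Leftrightarrow$ \textbf{(iv)}, I would recognize \eqref{eq:constrainedminimization} as a convex constrained minimization with linear constraint $v\in(B\mbbU_n)^\perp$, and form the Lagrangian $L(v,w_n)=\tfrac{1}{2}\norm{v}_\mbbV^2-\dual{f,v}_{\mbbV^*,\mbbV}+\dual{Bw_n,v}_{\mbbV^*,\mbbV}$ on $\mbbV\times\mbbU_n$, treating $w_n\in\mbbU_n$ as a Lagrange multiplier. Using Proposition~\ref{prop:subdifferential} (the duality map is the subdifferential of $\tfrac{1}{2}\norm{\cdot}_\mbbV^2$), the stationarity conditions in $v$ read $J_\mbbV(v)+Bw_n=f$ in $\mbbV^*$, and in $w_n$ they read $\dual{B\tilde w_n,v}_{\mbbV^*,\mbbV}=0$ for all $\tilde w_n\in\mbbU_n$; setting $(v,w_n)=(r,u_n)$ recovers \eqref{eq:mixed1_a}--\eqref{eq:mixed1_b}. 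The \emph{main obstacle} I anticipate is justifying this saddle-point equivalence rigorously: specifically, confirming that the strictly convex coercive primal \eqref{eq:constrainedminimization} admits a minimizer (which requires reflexivity of $\mbbV$ and weak lower semicontinuity of the norm to handle the infimum over the closed subspace $(B\mbbU_n)^\perp$), and verifying that the first-order conditions are both necessary and sufficient despite the nonlinear, non-quadratic structure of $\tfrac{1}{2}\norm{\cdot}_\mbbV^2$ in the non-Hilbert case. Once this is done, the chain closes and the theorem follows.
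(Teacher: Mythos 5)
Your proposal is correct and follows essentially the same route as the paper: (i)~$\Leftrightarrow$~(ii) via Proposition~\ref{thm:characterization1} together with the identification $J_{\mbbV^*}=J_\mbbV^{-1}$ of Section~\ref{sec:dualMapSmooth}, (ii)~$\Leftrightarrow$~(iii) by setting $r=J_\mbbV^{-1}(f-Bu_n)$ and using bijectivity of $J_\mbbV$, and (iii)~$\Leftrightarrow$~(iv) through the Lagrangian $\mcL(v,w_n)=\tfrac12\norm{v}_\mbbV^2-\dual{f,v}_{\mbbV^*,\mbbV}+\dual{Bw_n,v}_{\mbbV^*,\mbbV}$. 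The only divergence is in how (iv)~$\Rightarrow$~(iii) is completed: the paper recovers \eqref{eq:mixed1_a} from the saddle-point inequality by a difference-quotient argument that invokes hemi-continuity~\eqref{eq:hemicont} of $J_\mbbV$, whereas your plan to use the subdifferential characterization of minimizers of the convex map $v\mapsto\mcL(v,u_n)$ --- with $\partial\big(\tfrac12\norm{\cdot}_\mbbV^2\big)=\{J_\mbbV\}$ by Proposition~\ref{prop:subdifferential} and strict convexity of $\mbbV^*$, plus the elementary sum rule for a continuous linear perturbation --- works equally well and avoids hemi-continuity; sufficiency of the first-order condition is automatic by convexity, which is exactly the computation the paper performs in (iii)~$\Rightarrow$~(iv). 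Also, the ``main obstacle'' you flag, namely existence of a primal minimizer via reflexivity and weak lower semicontinuity, is not actually needed for the equivalence: in (iii)~$\Rightarrow$~(iv) the saddle point $(r,u_n)$ is exhibited directly from the mixed system, and in (iv)~$\Rightarrow$~(iii) the saddle point is given by hypothesis.
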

%
%
\begin{proof} 
We proceed by proving consecutively: (i) $\Leftrightarrow$ (ii) $\Leftrightarrow$ (iii) $\Rightarrow$ (iv) $\Rightarrow$ (iii).
\par (i) $\Leftrightarrow$ (ii) :\;
By Proposition~\ref{thm:characterization1}, $u_n\in \mbbU_n$ is the unique minimizer of~\eqref{eq:resmin} if and only if $J_{\mbbV^*}(f-Bu_n)\in \mbbV^{**}$ annihilates the discrete space $B\mbbU_n \subset \mbbV^*$. In other words, because of the identification $J_{\mbbV^*}=J^{-1}_\mbbV$ (see Section~\ref{sec:dualMapSmooth}), 
$$
\<Bw_n,J_\mbbV^{-1}(f-Bu_n)\>_{\mbbV^\ast,\mbbV}=\<J_{\mbbV^*}(f-Bu_n),Bw_n\>_{\mbbV^{\ast\ast},\mbbV^*}=0,
$$
for all $w_n\in\mbbU_n$.
\par (ii) $\Leftrightarrow$ (iii) :\;
Note that $r=J_\mbbV^{-1}(f-Bu_n)$.
\par (iii) $\Rightarrow$ (iv) :\;
The Lagrangian $\mcL:\mbbV\times \mbbU_n\rightarrow \realset$ associated with the constrained 
minimization~\eqref{eq:constrainedminimization} is:
$$
\mathcal L(v,w_n):={1\over2}\|v\|^2_\mbbV-\<f,v\>_{\mbbV^*,\mbbV}+\<B^*v,w_n\>_{\mbbU^*,\mbbU}.
$$
Let $(r,u_n)$ denote the solution to the mixed formulation~\eqref{eq:mixed1}. Firstly, since $r\in (B\mbbU_n)^\perp$, it is straightforward to see that $\mathcal L(r,w_n)=\mathcal L(r,u_n)$. Secondly, 
\begin{alignat*}{2}
\tag{by~\eqref{eq:mixed1_a}}
0&=  \<J_\mbbV(r),v-r\>_{\mbbV^*,\mbbV}+\<Bu_n,v-r\>_{\mbbV^*,\mbbV}-\<f,v-r\>_{\mbbV^*,\mbbV} 
\\
\tag{by Prop.~\eqref{prop:subdifferential}}
 &\leq {1\over2}\|v\|^2_\mbbV-{1\over2}\|r\|^2_\mbbV+\<Bu_n,v-r\>_{\mbbV^*,\mbbV}-\<f,v-r\>_{\mbbV^*,\mbbV}
\\
&=   \mathcal L(v,u_n)-\mathcal L(r,u_n).
\end{alignat*}
Therefore $(r,u_n)$ is a saddle-point of the Lagrangian, i.e., 
\begin{alignat}{2}
\label{eq:saddlePointForm}
  \mcL(r,w_n) \le \mcL(r,u_n) \le \mcL(v,u_n) 
  \qquad\forall (v,w_n) \in \mbbV \times \mbbU_n,
\end{alignat}
which is equivalent to~\eqref{eq:constrainedminimization}.
%
\par
\par (iv) $\Rightarrow$ (iii) :\;Let $(r,u_n)$ be a solution of~\eqref{eq:constrainedminimization}, i.e.,~\eqref{eq:saddlePointForm} holds. The first inequality in~\eqref{eq:saddlePointForm} implies
\begin{alignat*}{2}
  \dual{Bw_n,r}_{\mbbV^*,\mbbV} \le \dual{Bu_n,r}_{\mbbV^*,\mbbV}
  \quad \forall w_n\in \mbbU_n\,,
\end{alignat*}
which implies~\eqref{eq:mixed1_b} by a vector-space argument. Next, considering the second inequality in \eqref{eq:saddlePointForm} with $v$ equal to $r+ \lambda v$, and $\lambda >0$, it follows that
\begin{alignat*}{2}
 0 &\le \lambda^{-1} \Big( \mcL(r +\lambda v, u_n) - \mcL(r,u_n) \Big)
 \\
   &= \lambda^{-1} \Big( \tfrac{1}{2} \norm{r+\lambda v}_{\mbbV}^2 - \tfrac{1}{2} \norm{r}_{\mbbV}^2 \Big)
    - \dual{f, v}_{\mbbV^*,\mbbV}  + \dual{Bu_n,v}_{\mbbV^*,\mbbV}
 \\ \tag{by Prop.~\ref{prop:subdifferential}}
   &\le  \bigdual{J_\mbbV(r+\lambda v), v}_{\mbbV^*,\mbbV}
    + \dual{B u_n ,v}_{\mbbV^*,\mbbV} - \dual{f,v}_{\mbbV^*,\mbbV} 
\end{alignat*}
Therefore, upon~$\lambda \rightarrow 0^+$, invoking hemi-continuity of~$J_\mbbV$ (see~\eqref{eq:hemicont}) and repeating the above with~$-v$ instead of~$v$, one recovers~\eqref{eq:mixed1_a}.
%
%
\end{proof}
\begin{remark}[Mixed form for optimal test-space norm]
If one assumes $B$~is a linear bounded bijective operator and, 
instead of $\norm{\cdot}_{\mbbV}$, one uses the norm~$\norm{\cdot}_{\mbbV_{\mathrm{opt}}}$ on~$\mbbV$ (recall from Remark~\ref{rem:Vopt}), then one can show that \eqref{eq:mixed1_a} holds with~$\dual{J_\mbbV(r),v}_{\mbbV^*,\mbbV}$ replaced by $\bigdual{B^* v,J_{\mbbU}^{-1}(B^*r)}_{\mbbU^*,\mbbU}$.
\end{remark}

\section{Analysis of the inexact method}
\label{sec:practical}
We now consider the \emph{tractable} approximation.
The reflexive smooth setting guarantees that the semi-infinite mixed formulation~\eqref{eq:mixed1} introduced in Theorem~\ref{thm:ref_smooth} is well posed. For convenience, this formulation will be the starting point for the inexact method. 
\par
In addition to $\mbbU_n\subset \mbbU$, let $\mbbV_m\subset \mbbV$ be a finite-dimensional subspace. We shall then consider: 
\begin{subequations}
\label{eq:mixed_discrete}
\begin{empheq}[left=\left\{\;,right=\right.,box=]{alignat=3} 
\notag
& \text{Find } (r_m,u_n)\in \mbbV_m \times \mbbU_n :
\\ \label{eq:mixed_discrete_a}
& \quad \<J_\mbbV(r_m),v_m\>_{\mbbV^{\ast},\mbbV}+\<Bu_n,v_m\>_{\mbbV^{\ast},\mbbV} &&=\<f,v_m\>_{\mbbV^{\ast},\mbbV} 
 & \quad  & \forall v_m\in \mbbV_m\,,
\\ \label{eq:mixed_discrete_b}
& \quad \<B^*r_m,w_n\>_{\mbbU^{\ast},\mbbU} 
&& =0  && \forall w_n\in \mbbU_n\,.
\end{empheq}
\end{subequations}
Because the nonlinear operator~$J_\mbbV$ is monotone, we refer to the above as a \emph{monotone mixed method}. 
\subsection{Equivalent discrete settings}
Analogous to the semi-infinite mixed formulation, which is equivalent to residual minimization (see Theorem~\ref{thm:ref_smooth}), the monotone mixed method is related to residual minimization in the \emph{discrete} dual norm
\begin{alignat*}{2}
 \norm{\cdot}_{(\mbbV_m)^*} =
 \sup_{v_m\in \mbbV_m}{\< \,\cdot\, ,v_m\>_{(\mbbV_m)^*,\mbbV_m}\over \|v_m\|_{\mbbV}} 
\,.
\end{alignat*}
The next theorem summarizes this equivalence and, additionally, shows the equivalence with an \emph{inexact} version of the nonlinear Petrov--Galerkin discretization and a~\emph{discrete} constrained minimization. 

\begin{theorem}[Discrete equivalent characterizations]\label{teo:discrete_residual}
%
Let $\mbbU$ and $\mbbV$ be two Banach spaces and let $B:\mbbU \to\mbbV^*$ be a linear, continuous and bounded-below operator. Assume that $\mbbV$ and $\mbbV^*$ are reflexive and strictly convex. Given $f\in \mbbV^*$ and finite-dimensional subspaces $\mbbU_n\subset \mbbU$ and $\mbbV_m\subset \mbbV$, the following statements are equivalent:
\begin{itemize}
\item[(i)] $(r_m,u_n)\in \mbbV_m\times \mbbU_n$ is a solution of the discrete mixed problem:
$$
\left\{
\begin{array}{lll}
 \<J_\mbbV(r_m),v_m\>_{\mbbV^{\ast},\mbbV}+\<Bu_n,v_m\>_{\mbbV^{\ast},\mbbV} & =\<f,v_m\>_{\mbbV^{\ast},\mbbV} \,,
 & \quad \forall v_m\in \mbbV_m\,,
\\ 
\<B^*r_m,w_n\>_{\mbbU^{\ast},\mbbU} 
& =0\,, & \quad\forall w_n\in \mbbU_n\,.
\end{array}
\right.$$
\item[(ii)]
$u_n\in\mbbU_n$ is a solution of the inexact non-linear Petrov-Galerkin discretization:
\begin{alignat}{2}\label{eq:dicreteNPG}
\Big< \nu_n
  \,,\, I_m J^{-1}_{\mbbV_m}\circ I_m^*(f-Bu_n) \Big>_{\mbbV^*,\mbbV} 
  = 0\,, \quad \forall
\nu_n\in B\mbbU_n\,.
\end{alignat}
and $r_m=J_{\mbbV_m}^{-1}\circ I_m^* (f-Bu_n)$, where $I_m:\mbbV_m\rightarrow \mbbV$ is the natural injection. 
\item[(iii)]
$u_n\in \mbbU_n$ is a minimizer of the discrete residual minimization problem: 
\begin{equation}\label{eq:discretemin}
  \min_{w_n\in \mbbU_n}\|I_m^*(f-Bw_n)\|_{(\mbbV_m)^*}\,\,,
\end{equation}
and $r_m=J_{\mbbV_m}^{-1}\circ I_m^* (f-Bu_n)$, where $I_m^*:\mbbV^*\rightarrow (\mbbV_m)^*$ is the natural injection.

\item[(iii)]
$u_n\in \mbbU_n$ is the Lagrange multiplier of the discrete constrained minimization problem:
\begin{equation}\label{eq:discrete_constrained_min}
\min_{v_m\in \mbbV_m\cap (B\mbbU_n)^\bot}
\left\{
{1\over2}\|v_m\|_\mbbV^2-\<f,v_m\>_{\mbbV^*,\mbbV}
\right\},
\end{equation}
while $r_m\in\mbbV_m$ is the minimizer of it.
\end{itemize}
\end{theorem}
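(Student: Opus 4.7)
The plan is to reduce everything to the semi-infinite theory (Theorem~\ref{thm:ref_smooth}), regarded on the finite-dimensional ``test'' space~$\mathbb{V}_m$. The key algebraic ingredient is Lemma~\ref{lem:IJI}, which provides the identity
\begin{equation*}
  I_m^* \, J_\mathbb{V}(I_m r_m) \;=\; J_{\mathbb{V}_m}(r_m) \qquad \forall r_m\in \mathbb{V}_m\,,
\end{equation*}
together with the observation that, since $\mathbb{V}_m$ is a finite-dimensional subspace of a strictly-convex reflexive~$\mathbb{V}$, it is itself reflexive, strictly convex, and $J_{\mathbb{V}_m}$ is single-valued and bijective. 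This is what allows one to ``project'' every ingredient of Theorem~\ref{thm:ref_smooth} onto~$\mathbb{V}_m$.

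First I would prove (i) $\Leftrightarrow$ (ii). Reading~\eqref{eq:mixed_discrete_a} as an identity in $(\mathbb{V}_m)^*$ via~$I_m^*$ and applying Lemma~\ref{lem:IJI}, one rewrites the first equation as $J_{\mathbb{V}_m}(r_m)=I_m^*(f-Bu_n)$; since $J_{\mathbb{V}_m}$ is bijective, this is equivalent to $r_m=J_{\mathbb{V}_m}^{-1}\circ I_m^*(f-Bu_n)$. Substituting this expression into~\eqref{eq:mixed_discrete_b}, and using $\dual{B^* r_m,w_n}_{\mbbU^*,\mbbU}=\dual{B w_n,I_m r_m}_{\mbbV^*,\mbbV}$, yields exactly~\eqref{eq:dicreteNPG}. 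Conversely, (ii) gives~$r_m$ by definition, and plugging it back into~\eqref{eq:mixed_discrete_a} (again via Lemma~\ref{lem:IJI}) recovers (i).

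Next I would prove (ii) $\Leftrightarrow$ (iii). The idea is to apply the characterization in Proposition~\ref{thm:characterization1} (or, equivalently, Theorem~\ref{thm:ref_smooth}(i)$\Leftrightarrow$(ii)) in the \emph{purely discrete} reflexive smooth setting obtained by replacing $\mathbb{V}$~by~$\mathbb{V}_m$, the operator~$B$ by~$I_m^*\circ B:\mathbb{U}\to (\mathbb{V}_m)^*$, and the datum~$f$ by~$I_m^* f \in (\mathbb{V}_m)^*$. Then~\eqref{eq:discretemin} is precisely residual minimization in this discrete setting, and its equivalent nonlinear Petrov--Galerkin characterization reads: there exists $r_m=J_{\mathbb{V}_m}^{-1}\circ I_m^*(f-Bu_n)\in \mathbb{V}_m$ with
$\langle I_m^* B w_n,r_m\rangle_{(\mathbb{V}_m)^*,\mathbb{V}_m}=0$ for every $w_n\in \mathbb{U}_n$. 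Rewriting the duality in the ambient space via $I_m$ yields exactly~\eqref{eq:dicreteNPG}.

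Finally, for (iv), I would follow verbatim the Lagrangian argument of Theorem~\ref{thm:ref_smooth}(iii)$\Leftrightarrow$(iv), but using the Lagrangian
\begin{equation*}
  \mathcal{L}_m(v_m,w_n):=\tfrac{1}{2}\|v_m\|_\mathbb{V}^2 - \dual{f,v_m}_{\mbbV^*,\mbbV} + \dual{B^* v_m,w_n}_{\mbbU^*,\mbbU}\,,\qquad (v_m,w_n)\in \mathbb{V}_m\times \mathbb{U}_n\,.
\end{equation*}
Here the subdifferential characterization of Proposition~\ref{prop:subdifferential} (applied in~$\mathbb{V}_m$) together with hemi-continuity of~$J_{\mathbb{V}_m}$ and, once more, the Lemma~\ref{lem:IJI} identification $I_m^* J_\mathbb{V}(r_m) = J_{\mathbb{V}_m}(r_m)$, produce the saddle-point equations~\eqref{eq:mixed_discrete_a}--\eqref{eq:mixed_discrete_b} from the critical-point conditions of~$\mathcal{L}_m$. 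The constraint $v_m\in (B\mathbb{U}_n)^\perp$ in~\eqref{eq:discrete_constrained_min} exactly encodes~\eqref{eq:mixed_discrete_b}, while first-order stationarity in~$v_m$ gives~\eqref{eq:mixed_discrete_a}. The main subtlety—and the only place where one must be careful—is the consistent use of the pair $(I_m,I_m^*)$ when writing the first equation as an identity in $(\mathbb{V}_m)^*$ rather than in $\mathbb{V}^*$; once Lemma~\ref{lem:IJI} is in place, every step is a literal transcription of the semi-infinite proof of Theorem~\ref{thm:ref_smooth}.
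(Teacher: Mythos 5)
Your proposal is correct and, for the equivalences (i)$\Leftrightarrow$(ii) and (i)$\Leftrightarrow$(iv), it is essentially the paper's own argument: the paper likewise opens with the Lemma~\ref{lem:IJI} identity $I_m^*J_\mbbV(I_m r_m)=J_{\mbbV_m}(r_m)$ to identify \eqref{eq:mixed_discrete_a} with $r_m=J_{\mbbV_m}^{-1}\circ I_m^*(f-Bu_n)$, and it likewise disposes of (iv) by transcribing the Lagrangian argument of Theorem~\ref{thm:ref_smooth}, part (iii)$\Leftrightarrow$(iv). The only substantive difference is in (ii)$\Leftrightarrow$(iii): you instantiate Proposition~\ref{thm:characterization1} (equivalently Theorem~\ref{thm:ref_smooth}(i)$\Leftrightarrow$(ii)) with $\mbbV$ replaced by $\mbbV_m$, $B$ by $I_m^*\circ B$ and $f$ by $I_m^*f$, whereas the paper proves (ii)$\Rightarrow$(iii) by a direct chain of equalities, using the norm-attainment property \eqref{eq:normSup} of $J_{\mbbV_m}(r_m)$ at $r_m$, and (iii)$\Rightarrow$(i) by applying Theorem~\ref{thm:minimizer_characterization} with $\mbbY=(\mbbV_m)^*$ and $\mbbM=I_m^*B\mbbU_n$. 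Your reduction is sound in substance, but be aware of one citation wrinkle: $I_m^*\circ B:\mbbU\to(\mbbV_m)^*$ is never bounded below (its target is finite-dimensional), so the hypotheses of Proposition~\ref{thm:characterization1} and Theorem~\ref{thm:ref_smooth} are not literally satisfied, and the uniqueness claims in the latter do not transfer; the clean fix---which is exactly what the paper does---is to invoke Theorem~\ref{thm:minimizer_characterization} directly with $\mbbM=I_m^*B\mbbU_n\subset(\mbbV_m)^*$ (closed because finite-dimensional), combined with the canonical identification $\mathcal{J}_{(\mbbV_m)^*}=J_{\mbbV_m}^{-1}$, since that characterization needs no bounded-below assumption. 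Your preliminary observations that $\mbbV_m$ and $(\mbbV_m)^*$ inherit the reflexive smooth setting and that $J_{\mbbV_m}$ is single-valued and bijective are correct, and are used implicitly by the paper as well.
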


\begin{proof}
%
%
First notice the following direct equivalences:
\begin{alignat*}{2}
 &&  r_m &= J_{\mbbV_m}^{-1} \circ I_m^*(f-Bu_n)
\\
 \Leftrightarrow & \quad&  
  J_{\mbbV_m}( r_m )  &= I_m^*(f-Bu_n)
\\
\tag{by Lemma~\ref{lem:IJI}}
 \Leftrightarrow & \quad&  
  I_m^* J_{\mbbV}( I_m r_m )  &= I_m^*(f-Bu_n) \,,
\end{alignat*}
where the last statement is equivalent to~\eqref{eq:mixed_discrete_a}.
\par (i) $\Rightarrow$ (ii).
If $(u_n,r_m)\in \mbbU_n\times \mbbV_m$ is a solution of~\eqref{eq:mixed_discrete}, then $r_m=J_{\mbbV_m}^{-1}\circ I_m^* (f-Bu_n)$ and~\eqref{eq:mixed_discrete_b} is nothing but~\eqref{eq:dicreteNPG}. 
\par (ii) $\Rightarrow$ (iii).
Observe that for any $w_n\in\mbbU_n$ we have :
\begin{alignat*}{2}
\notag
\|I_m^*(f-Bu_n)\|_{(\mbbV_m)^*} &= 
\sup_{v_m\in\mbbV_m}{\<I_m^*(f-Bu_n),v_m\>_{(\mbbV_m)^*,\mbbV_m}\over \|v_m\|_\mbbV} 
\\
& = 
\sup_{v_m\in\mbbV_m}{\<J_{\mbbV_m}(r_m),v_m\>_{(\mbbV_m)^*,\mbbV_m}\over \|v_m\|_\mbbV} 
\\ \tag{by \eqref{eq:normSup}}
&= {\<J_{\mbbV_m}(r_m),r_m\>_{(\mbbV_m)^*,\mbbV_m}\over \|r_m\|_\mbbV }
\\
 &=  {\<I_m^*(f-Bu_n),r_m\>_{(\mbbV_m)^*,\mbbV_m}\over \|r_m\|_\mbbV}
\\
\tag{by \eqref{eq:dicreteNPG}}
 &=   {\<I_m^*(f-Bw_n),r_m\>_{(\mbbV_m)^*,\mbbV_m}\over \|r_m\|_\mbbV}
 \\
\notag
 &\le  \norm{I_m^*(f-Bw_n)}_{(\mbbV_m)^*}\,\,.
\end{alignat*}
Thus, $u_n$ is a minimizer of~\eqref{eq:discretemin}. 
%
\par (iii) $\Rightarrow$ (i).
If $u_n\in \mbbU_n$ is a minimizer of~\eqref{eq:discretemin} and $r_m = J_{\mbbV_m}^{-1} \circ I_m^*(f-Bu_n) =  J_{(\mbbV_m)^*} \circ I_m^*(f-Bu_n) $, then by Theorem~\ref{thm:minimizer_characterization}, with $\mbbM=I_m^*B\mbbU_n \subset (\mbbV_m)^*=\mbbY$, $r_m$ satisfies: 
\begin{alignat*}{2}
0 = \dual{I_m^* Bw_n,r_m}_{(\mbbV_m)^*,\mbbV_m} = \dual{Bw_n,I_mr_m}_{\mbbV^*,\mbbV} 
= \dual{B^*r_m,w_n }_{\mbbU^*,\mbbU},
\quad \forall w_n \in \mbbU_n\,,
\end{alignat*}
which verifies~\eqref{eq:mixed_discrete_b}. 
\par (i) $\Leftrightarrow$ (iv).
The proof of this equivalence follows exactly the same reasoning as in the semi-infinite setting; see the proof of Theorem~\ref{thm:ref_smooth}, part~(iii) $\Leftrightarrow$ (iv).
%
\end{proof}

\subsection{Well-posedness of the inexact method}
We now study the existence and uniqueness of solutions to the inexact method~\eqref{eq:mixed_discrete}. A critical ingredient for the uniqueness analysis is the following condition:
\begin{assumption}[Fortin condition] 
\label{assumpt:Fortin}
Let $\{(\mbbU_n,\mbbV_m)\}$ be a family of \emph{discrete} subspace pairs, where $\mbbU_n\subset\mbbU$ and $\mbbV_m\subset\mbbV$. For each pair $(\mbbU_n,\mbbV_m)$ in this family, there exists an operator $\Pi_{n,m}:\mbbV\to \mbbV_m$ and constants $C_\Pi>0$ and $D_\Pi>0$ (independent of $n$ and $m$) such that the following conditions are satisfied:
\begin{subequations}
\label{eq:Fortin}
\begin{empheq}[left=\left\{,right=\right.,box=]{alignat=2}
\label{eq:Fortin_a}
\,\, &
 \|\Pi_{n,m} v\|_\mathbb V\leq C_\Pi \|v\|_\mathbb V\,\,, & \quad & \forall v\in\mathbb V\,, 
\\ \label{eq:Fortin_b}
& \|(I-\Pi_{n,m}) v\|_\mathbb V\leq D_\Pi \|v\|_\mathbb V\,\,, && \forall v\in\mathbb V\,,
\\ \label{eq:Fortin_c}
& \<Bw_n,v-\Pi_{n,m} v\>_{\mbbV^\ast,\mbbV}=0, &&  \forall w_n\in \mbbU_n,\,\forall v\in \mbbV,
\end{empheq}
\end{subequations}
where $I:\mathbb V\to\mbbV$ is the identity map in~$\mbbV$. For simplicity, we write $\Pi$ instead of~$\Pi_{n,m}$.
\end{assumption}
Such an operator is referred to as a \emph{Fortin} operator after Fortin's trick in mixed finite element methods~\cite[Section~5.4]{BofBreForBOOK2013}. For the existence of~$\Pi$, note that the last identity~\eqref{eq:Fortin_c} requires that $\dim \mbbV_m \ge \dim \Image(B|_{\mbbU_n}) = \dim \mbbU_n$ (for a bounded-below operator~$B$). Note that~\eqref{eq:Fortin_a} implies~\eqref{eq:Fortin_b} with~$D_\Pi = 1+ C_\Pi$, but to allow for sharper estimates, we prefer to retain the independent constant~$D_\Pi$.
\begin{theorem}[Inexact method: Discrete well-posedness]
\label{thm:discrete}
Let $\mbbU$ and $\mbbV$ be two Banach spaces and let $B:\mbbU \to\mbbV^*$ be a linear, continuous and bounded-below operator, with continuity constant $M_B>0$ and bounded-below constant $\gamma_B>0$. Assume that $\mbbV$ and $\mbbV^*$ are reflexive and strictly convex. Let $\mbbU_n\subset \mbbU$ and $\mbbV_m\subset \mbbV$ be finite-dimensional subspaces such that the (Fortin) Assumption~\ref{assumpt:Fortin} holds true.  Given $f\in \mbbV^*$, there exists a unique solution $(r_m,u_n)\in \mbbV_m\times \mbbU_n$ of the inexact method:
\begin{subequations}
\label{eq:thm:discrete:MMM}
\begin{empheq}[left=\left\{\;,right=\right.,box=]{alignat=3} 
\label{eq:thm:discrete:MMMa}
 \<J_\mbbV(r_m),v_m\>_{\mbbV^{\ast},\mbbV}
 +\<Bu_n,v_m\>_{\mbbV^{\ast},\mbbV} 
 &=\<f,v_m\>_{\mbbV^{\ast},\mbbV} \,,
 & \quad & \forall v_m\in \mbbV_m\,,
\\ \label{eq:thm:discrete:MMMb}
\<B^*r_m,w_n\>_{\mbbU^{\ast},\mbbU} 
 &=0\,, 
 & \quad & \forall w_n\in \mbbU_n\,.
\end{empheq}
\end{subequations}
Moreover, let $u\in\mbbU$ be such that $Bu=f$, then we have the 
a priori bounds:
\begin{subequations}
\label{eq:apriori_bounds}
\begin{empheq}[left=\left\{,right=\right.,box=]{alignat=2}
& \|r_m\|_\mbbV\leq \|f\|_{\mbbV^*}\leq M_B\|u\|_\mbbU
\quad\hbox{and}
\label{eq:r_apriori_bound}\\\label{eq:u_apriori_bound}
& \|u_n\|_\mbbU\leq {C_\Pi\over\gamma_B}\big(1+\CAO(\mbbV)\big)
\|f\|_{\mbbV^*}\leq {C_\Pi\over\gamma_B}\big(1+\CAO(\mbbV)\big)M_B\|u\|_\mbbU\,,
\end{empheq}
\end{subequations}
where $C_\Pi>0$ is the boundedness constant of the Fortin operator (see Assumption~\ref{assumpt:Fortin}) and $\CAO(\mbbV)\in[0,1]$ is the asymmetric-orthogonality geometrical constant related to the space $\mbbV$ (see Definition~\ref{def:AOconstant}).
\end{theorem}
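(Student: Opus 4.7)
The plan is to route everything through the equivalent characterizations established in Theorem~\ref{teo:discrete_residual}, which recast the monotone mixed system as either a discrete residual minimization or a best-approximation problem in the dual space $(\mbbV_m)^*$. For existence, I would invoke characterization~(iii) of Theorem~\ref{teo:discrete_residual}: minimize $w_n \mapsto \|I_m^*(f - Bw_n)\|_{(\mbbV_m)^*}$ over the finite-dimensional space $\mbbU_n$. The functional is continuous and convex; coercivity amounts to the discrete inf--sup bound $\|I_m^* Bw_n\|_{(\mbbV_m)^*} \ge \widehat{\gamma}\|w_n\|_\mbbU$, which follows from Fortin's Assumption~\ref{assumpt:Fortin} by Remark~\ref{rem:discrete_infsup}. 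Standard finite-dimensional arguments then deliver a minimizer $u_n \in \mbbU_n$, and setting $r_m = J^{-1}_{\mbbV_m}\bigl(I_m^*(f-Bu_n)\bigr)$ completes the pair. For uniqueness, I would subtract two candidate solutions $(r_m,u_n)$ and $(r_m',u_n')$, test \eqref{eq:thm:discrete:MMMa} with $v_m = r_m - r_m'$, and use \eqref{eq:thm:discrete:MMMb} to kill the mixed term; strict monotonicity of $J_\mbbV$ (see \eqref{eq:strictMonotone}, a consequence of strict convexity of $\mbbV$) forces $r_m = r_m'$, after which the first equation together with the discrete inf--sup forces $u_n = u_n'$.

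The bound \eqref{eq:r_apriori_bound} on $r_m$ is immediate: testing \eqref{eq:thm:discrete:MMMa} with $v_m = r_m$, the mixed term $\dual{Bu_n, r_m}_{\mbbV^*,\mbbV}$ vanishes by \eqref{eq:thm:discrete:MMMb}, leaving $\|r_m\|_\mbbV^2 = \dual{J_\mbbV(r_m), r_m}_{\mbbV^*,\mbbV} = \dual{f,r_m}_{\mbbV^*,\mbbV} \le \|f\|_{\mbbV^*}\|r_m\|_\mbbV$, and continuity of $B$ yields $\|f\|_{\mbbV^*} \le M_B\|u\|_\mbbU$.

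The delicate step, and the one I expect to be the main obstacle, is the sharper bound \eqref{eq:u_apriori_bound} on $u_n$, since a naive triangle inequality applied to $\dual{Bu_n,v_m}_{\mbbV^*,\mbbV} = \dual{f - J_\mbbV(r_m),v_m}_{\mbbV^*,\mbbV}$ yields the constant $2$ in place of $1 + \CAO(\mbbV)$. To recover the geometrical constant, I would chain three estimates. First, the bounded-below property gives $\gamma_B\|u_n\|_\mbbU \le \|Bu_n\|_{\mbbV^*}$; then localization by the Fortin operator via \eqref{eq:Fortin_c} and \eqref{eq:Fortin_a} yields $\|Bu_n\|_{\mbbV^*} = \sup_{v\in\mbbV}\dual{Bu_n,\Pi v}_{\mbbV^*,\mbbV}/\|v\|_\mbbV \le C_\Pi \|I_m^* Bu_n\|_{(\mbbV_m)^*}$. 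Second, via Theorem~\ref{teo:discrete_residual}(iii), $I_m^* Bu_n$ is the best approximation of $I_m^* f$ in the subspace $I_m^* B\mbbU_n \subset (\mbbV_m)^*$, so Proposition~\ref{prop:improvedApriori} applied in $(\mbbV_m)^*$ delivers $\|I_m^* Bu_n\|_{(\mbbV_m)^*} \le \bigl(1 + \CAO((\mbbV_m)^*)\bigr)\|I_m^* f\|_{(\mbbV_m)^*}$. Third, Lemma~\ref{lem:Lambda_properties}(ii)--(iii) gives the monotonicity $\CAO((\mbbV_m)^*) = \CAO(\mbbV_m) \le \CAO(\mbbV)$, while $\|I_m^* f\|_{(\mbbV_m)^*} \le \|f\|_{\mbbV^*} \le M_B\|u\|_\mbbU$ is trivial; concatenating the inequalities produces \eqref{eq:u_apriori_bound}.
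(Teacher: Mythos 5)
Your proof is correct, and it assembles the same key ingredients as the paper (the equivalences of Theorem~\ref{teo:discrete_residual}, strict monotonicity of $J_\mbbV$, the Fortin operator, and Proposition~\ref{prop:improvedApriori} with Lemma~\ref{lem:Lambda_properties}), but it organizes the existence step differently. The paper proves existence on the ``dual'' side: it first obtains the minimizer $r_m$ of the discrete constrained problem $\min_{v_m\in \mbbV_m\cap(B\mbbU_n)^\perp}\tfrac12\|v_m\|_\mbbV^2-\dual{f,v_m}_{\mbbV^*,\mbbV}$ and then recovers $u_n$ as a Lagrange multiplier through a kernel--range argument, namely that the optimality condition places $I_m^*(f-J_\mbbV(r_m))$ in $(\ker(B_n^*I_m))^\perp=\Image(I_m^*B_n)$; notably this existence argument needs no inf--sup/Fortin input at all. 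You instead work on the ``primal'' side, minimizing $w_n\mapsto\|I_m^*(f-Bw_n)\|_{(\mbbV_m)^*}$ over $\mbbU_n$ (coercivity via the discrete inf--sup implied by Assumption~\ref{assumpt:Fortin}, cf.\ Remark~\ref{rem:discrete_infsup}; alternatively Lemma~\ref{lemma:finite_best_approximation} already gives a best approximation in $I_m^*B\mbbU_n$ without coercivity) and then set $r_m=J_{\mbbV_m}^{-1}\bigl(I_m^*(f-Bu_n)\bigr)$, invoking the implication (iii)$\Rightarrow$(i) of Theorem~\ref{teo:discrete_residual}. Your uniqueness argument and the bound \eqref{eq:r_apriori_bound} coincide with the paper's (inf--sup versus the equivalent Fortin identity \eqref{eq:Fortin_c} plus injectivity of $B$ is immaterial), and your derivation of \eqref{eq:u_apriori_bound} is precisely the paper's \emph{second} (alternative) proof of Proposition~\ref{prop:Pbound}: bounded-below property, localization $\|Bu_n\|_{\mbbV^*}\le C_\Pi\|I_m^*Bu_n\|_{(\mbbV_m)^*}$ via \eqref{eq:Fortin_a} and \eqref{eq:Fortin_c}, the best-approximation bound in $(\mbbV_m)^*$, and $\CAO((\mbbV_m)^*)\le\CAO(\mbbV)$; the paper's first proof of that proposition instead exploits the supremizer $y_m=J_{\mbbV_m}^{-1}(I_m^*Bu_n)$ and the orthogonality $(r_m,y_m)\in\mathcal O_\mbbV$. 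What your route buys is a more streamlined existence proof that leans entirely on the already-established equivalences; what the paper's route buys is an existence argument independent of the Fortin condition, isolating that hypothesis as needed only for uniqueness and the stability bounds.
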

%
%
\begin{proof}
To proof existence, we consider the equivalent discrete constrained minimization problem~\eqref{eq:discrete_constrained_min}. The existence of a minimizer $r_m\in\mathbb V_m \cap (B\mbbU_n)^\bot$ is guaranteed since the functional $v_m\mapsto \tfrac{1}{2} \norm{v_m}_\mbbV^2 - \dual{f,v_m}_{\mbbV^*,\mbbV}$ is convex and continuous, and $\mbbV_m \cap (B\mbbU_n)^\bot$ is a closed subspace.
\par
Next, we claim that there exist a $u_n\in \mbbU_n$ such that
\begin{alignat*}{2}
  \<Bu_n,v_m\>_{\mbbV^*,\mbbV}=\<f-J_\mbbV(r_m),v_m\>_{\mbbV^*,\mbbV}, \quad\forall v_m\in \mbbV_m.
\end{alignat*}
To see this, consider the restricted operator $B_n:\mbbU_n\to\mbbV^*$, such that $B_nw_n=Bw_n$ for all $w_n\in \mbbU_n$, and recall the natural injection $I_m: \mbbV_m \rightarrow \mbbV$. Then, the above translates into 
\begin{alignat*}{2}
 I^*_m B_n u_n
  = I^*_m \big( f -  J_\mbbV(r_m)  \big)
  \quad \text{in } (\mbbV_m)^*. 
\end{alignat*}
Thus, to proof existence, we show that $I_m^*(f-J_\mbbV(r_m))$ is in the (closed) range of the finite-dimensional operator 
$I_m^*B_n:\mbbU_n\to (\mbbV_m)^*$. 
Since $r_m$ is the minimizer of~\eqref{eq:discrete_constrained_min}, we have
\begin{alignat*}{2}
& 0  = \<J_\mbbV(r_m)-f,I_mv_m\>_{\mbbV^*,\mbbV}
 = \<I_m^* (J_\mbbV(r_m)-f),v_m\>_{(\mbbV_m)^*,\mbbV_m},
\\
  &   \forall v_m\in \mbbV_m\cap (B\mbbU_n)^\bot=\ker (B_{n}^\ast I_m).
\end{alignat*}
Hence, $I_m^*(f-J_\mbbV(r_m))\in (\ker (B_{n}^\ast I_m))^\bot=\Image (I_m^*B_n)$.
\par
To prove uniqueness assume that $(u_n,r_m)$ and $(\tilde u_n,\tilde r_m)$ are two solutions of problem
\eqref{eq:mixed_discrete}.  Then, by subtraction, it is immediate to see that:
$$
\<J_\mbbV(r_m)-J_\mbbV(\tilde r_m),r_m-\tilde r_m\>_{\mbbV^*,\mbbV}=0, 
$$
which implies that $\tilde r_m=r_m$ by strict monotonicity of $J_\mbbV$ (see~\eqref{eq:strictMonotone}). 
Going back to~\eqref{eq:thm:discrete:MMMa} we now obtain $\<B(u_n-\tilde u_n),v_m\>_{\mbbV^\ast,\mbbV}=0$, for all $v_m\in \mbbV_m$. Therefore, by  the Fortin-operator property~\eqref{eq:Fortin_c}, 
\begin{alignat*}{2}
\<B(u_n-\tilde u_n),v\>_{\mbbV^\ast,\mbbV}=\<B(u_n-\tilde u_n),\Pi v\>_{\mbbV^\ast,\mbbV}=0,
\quad \forall v\in \mbbV.
\end{alignat*}
Thus, $B(u_n-\tilde u_n)=0$ which implies $u_n-\tilde u_n=0$ since~$B$ is bounded below.
\par
The \emph{a priori} bound~\eqref{eq:r_apriori_bound} is straightforwardly  obtained by replacing $v_m=r_m$ in~\eqref{eq:thm:discrete:MMMa} and using~\eqref{eq:thm:discrete:MMMb} together with the Cauchy--Schwartz inequality.
\par
For the \emph{a priori} bound~\eqref{eq:u_apriori_bound}, we refer to Proposition~\ref{prop:Pbound} in Section~\ref{sec:apriori}.
\end{proof}
\par
Although~$\mbbV_m$ should be sufficiently large for stability, there is no need for it to be close to the entire~$\mbbV$. The following proposition essentially shows that the goal of~$\mbbV_m$ is to resolve the residual~$r$ in the semi-discrete formulation~\eqref{eq:mixed1}.
\begin{proposition}[Optimal~$\mbbV_m$]
\label{prop:r_in_V} Assuming the same conditions of Theorem~\ref{thm:discrete}, let $(r,u_n)\in \mbbV\times \mbbU_n$ be the solution of the semi-discrete formulation~\eqref{eq:mixed1}. 
If $r\in\mbbV_m$, then the pair $(r,u_n)$ is also the unique solution of the fully-discrete formulation~\eqref{eq:thm:discrete:MMM}.
\end{proposition}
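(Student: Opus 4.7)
The plan is to show that the pair $(r, u_n)$ from the semi-discrete formulation~\eqref{eq:mixed1} is automatically admissible for the fully-discrete formulation~\eqref{eq:thm:discrete:MMM} when $r \in \mbbV_m$, and then invoke the uniqueness result from Theorem~\ref{thm:discrete} to conclude.

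More precisely, I would proceed in two short steps. First, I would verify that $(r_m, u_n) := (r, u_n) \in \mbbV_m \times \mbbU_n$ solves~\eqref{eq:thm:discrete:MMM}. Since $r \in \mbbV_m$ by hypothesis, this assignment lives in the correct discrete space. Equation~\eqref{eq:mixed1_a} holds for every $v \in \mbbV$, so in particular it holds when restricted to test functions $v = v_m \in \mbbV_m \subset \mbbV$, which is exactly~\eqref{eq:thm:discrete:MMMa}. The second equation~\eqref{eq:mixed1_b} is identical in form to~\eqref{eq:thm:discrete:MMMb}, so it carries over verbatim.

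Second, I would invoke uniqueness. Theorem~\ref{thm:discrete}, whose hypotheses are exactly those assumed here, guarantees that the discrete problem~\eqref{eq:thm:discrete:MMM} admits a unique solution. Thus the pair $(r, u_n)$, which we have just shown solves~\eqref{eq:thm:discrete:MMM}, must coincide with that unique solution.

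I do not foresee any serious obstacle: the proof is essentially an observation that the semi-discrete problem contains the fully-discrete one as a restriction of test functions, together with a direct appeal to the already-established uniqueness. The only mild subtlety worth mentioning explicitly is that the $u_n$-component from the semi-discrete problem is automatically consistent with the discrete one because $\mbbU_n$ is the same trial space in both formulations, so no projection of the trial component is needed.
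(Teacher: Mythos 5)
Your proof is correct and is in substance the paper's own argument, just packaged more economically: you note that, since $r\in\mbbV_m$, restricting the test functions in~\eqref{eq:mixed1_a} to $\mbbV_m$ shows $(r,u_n)$ is an admissible solution of~\eqref{eq:thm:discrete:MMM}, and then you invoke the uniqueness guaranteed by Theorem~\ref{thm:discrete} as a black box. The paper instead subtracts the two systems and re-runs the strict-monotonicity/Fortin comparison (i.e.\ the uniqueness argument of Theorem~\ref{thm:discrete}) inline, so both routes rest on exactly the same two ingredients and yours is a legitimate, slightly cleaner rendition.
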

\begin{proof}
Let $(\widetilde r_m,\widetilde u_n)\in\mbbV_m\times\mathbb U_n$ be the unique solution of~\eqref{eq:thm:discrete:MMM} which is guranteed by Theorem~\ref{thm:discrete}. 
The aim is to prove  that $(\widetilde r_m,\widetilde u_n)=(r,u_n)$. So testing~\eqref{eq:mixed1_a} with $v_m\in\mbbV_m$ and subtracting~\eqref{eq:thm:discrete:MMMa} (satisfied by $(\widetilde r_m,\widetilde u_n)$) we get:
\begin{equation}\label{eq:substraction}
\dual{B(u_n-\widetilde u_n), v_m}_{\mbbV^*,\mbbV}= -\dual{J_\mbbV(r)-J_\mbbV(\widetilde r_m), v_m}_{\mbbV^*,\mbbV}\,\,,\quad\forall\, v_m\in \mbbV_m\,.
\end{equation}
In particular for $v_m=r-\widetilde r_m\in B(\mbbU_n)^\bot$ we obtain:
$$
\dual{J_\mbbV(r)-J_\mbbV(\widetilde r_m),r-\widetilde r_m}_{\mbbV^*,\mbbV}=0\,,
$$
which implies $r=\widetilde r_m$ by strict monotonicity of $J_\mbbV$. Going back to~\eqref{eq:substraction} we get:
$$
\dual{B(u_n-\widetilde u_n),v_m}_{\mbbV^*,\mbbV}=0\,\,,\quad\forall\,v_m\in \mbbV_m\,,
$$
which implies $u_n=\widetilde u_n$ by the Fortin condition~\eqref{eq:Fortin_c} and the injectivity of~$B$.
\end{proof}

\subsection{Error analysis of the inexact method}

We next present an error analysis for the inexact method. Since the method is fundamentally related to (discrete) residual minimization, the most straightforward error estimate is of \emph{a~posteriori} type. Immediately after, an \emph{a~priori} error estimate follows naturally from the a posteriori estimate (compare with the error estimates for the \emph{exact} residual-minimization method in~\eqref{eq:cea}). The constant in the resulting a~priori estimate can however be improved by resorting to an alternative analysis technique, which we present in Section~\ref{sec:apriori}. 
\par
%
%
\begin{theorem}[Inexact method: A~posteriori error estimate]
\label{thm:aposteriori}
Let $\mbbU$ and $\mbbV$ be two Banach spaces and let $B:\mbbU \to\mbbV^*$ be a linear, continuous and bounded-below operator, with continuity constant $M_B>0$ and bounded-below constant $\gamma_B>0$. Assume that $\mbbV$ and $\mbbV^*$ are reflexive and strictly convex. Let $\mbbU_n\subset \mbbU$ and $\mbbV_m\subset \mbbV$ be finite-dimensional subspaces such that the (Fortin) Assumption~\ref{assumpt:Fortin} holds true.
Given $f=Bu\in\mbbV^*$, let $(r_m,u_n)\in \mathbb V_m\times \mathbb U_n$ be the unique solution of the discrete mixed problem:
$$
\left\{
\begin{array}{lll}
 \<J_\mbbV(r_m),v_m\>_{\mbbV^{\ast},\mbbV}+\<Bu_n,v_m\>_{\mbbV^{\ast},\mbbV} & =\<f,v_m\>_{\mbbV^{\ast},\mbbV} \,,
 & \quad \forall v_m\in \mbbV_m\,,
\\ 
\<B^*r_m,w_n\>_{\mbbU^{\ast},\mbbU} 
& =0\,, & \quad\forall w_n\in \mbbU_n\,.
\end{array}
\right.$$
Then $u_n$ satisfies the following a posteriori error estimate:
\begin{equation}\label{eq:aposteriori}
\|u-u_n\|_\mathbb U\leq{1\over\gamma_B}\osc(f)+{C_\Pi\over\gamma_B}
\norm{r_m}_\mbbV
\,\,,
\end{equation}
where the data-oscillation term~$\osc(f)$ satisfies
\begin{alignat}{2}
\label{eq:osc}
\osc(f):=\displaystyle\sup_{v\in \mathbb V}{\<f,v-\Pi v\>\over \|v\|_\mathbb V}
\leq M_B D_\Pi \inf_{w_n\in\mbbU_n} \|u-w_n\|_\mathbb U\,,
\end{alignat}
and $\|r_m\|_\mbbV$ satisfies
\begin{alignat}{2}
\label{eq:aposteriori2}
 \norm{r_m}_{\mbbV} 
 \le M_B\inf_{w_n\in\mbbU_n} \|u-w_n\|_\mbbU\,.
\end{alignat}
The constants $C_\Pi$ and~$D_\Pi$ correspond to the boundedness constants related to the Fortin operator~$\Pi:\mbbV\rightarrow \mbbV_m$ (see eq.~\eqref{eq:Fortin_a}--\eqref{eq:Fortin_b}). 
\end{theorem}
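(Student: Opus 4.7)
The plan is to prove the three bounds separately, relying on the key identity derived from the mixed system, namely $\dual{J_\mbbV(r_m),v_m}_{\mbbV^*,\mbbV}=\dual{B(u-u_n),v_m}_{\mbbV^*,\mbbV}$ for all $v_m\in\mbbV_m$ (obtained by substituting $f=Bu$ into \eqref{eq:thm:discrete:MMMa}), together with the orthogonality \eqref{eq:Fortin_c} of the Fortin operator.

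For the main a posteriori bound \eqref{eq:aposteriori}, I would start from the bounded-below property: $\gamma_B\|u-u_n\|_\mbbU\le\|B(u-u_n)\|_{\mbbV^*}$. Using the supremum definition of the dual norm and the decomposition $v=\Pi v+(v-\Pi v)$, I would split $\dual{B(u-u_n),v}_{\mbbV^*,\mbbV}$ into two terms. The piece involving $\Pi v\in\mbbV_m$ is controlled by the key identity: $\dual{B(u-u_n),\Pi v}_{\mbbV^*,\mbbV}=\dual{J_\mbbV(r_m),\Pi v}_{\mbbV^*,\mbbV}\le\|r_m\|_\mbbV\,\|\Pi v\|_\mbbV\le C_\Pi\|r_m\|_\mbbV\|v\|_\mbbV$ (using $\|J_\mbbV(r_m)\|_{\mbbV^*}=\|r_m\|_\mbbV$ and Fortin bound \eqref{eq:Fortin_a}). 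The piece involving $v-\Pi v$ uses Fortin orthogonality \eqref{eq:Fortin_c} to replace $B(u-u_n)$ by $Bu=f$ (since the $Bu_n$ term annihilates): $\dual{B(u-u_n),v-\Pi v}_{\mbbV^*,\mbbV}=\dual{f,v-\Pi v}_{\mbbV^*,\mbbV}\le\osc(f)\|v\|_\mbbV$. Summing yields \eqref{eq:aposteriori}.

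The oscillation bound \eqref{eq:osc} is a direct application of \eqref{eq:Fortin_c} combined with \eqref{eq:Fortin_b}: for any $w_n\in\mbbU_n$, one writes $\dual{f,v-\Pi v}=\dual{B(u-w_n),v-\Pi v}$ (adding and subtracting $Bw_n$ and then using orthogonality), then estimates by $M_B D_\Pi\|u-w_n\|_\mbbU\|v\|_\mbbV$ and takes the infimum over $w_n\in\mbbU_n$.

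Finally, for \eqref{eq:aposteriori2}, I would test \eqref{eq:thm:discrete:MMMa} with $v_m=r_m$ to obtain $\|r_m\|_\mbbV^2=\dual{J_\mbbV(r_m),r_m}=\dual{B(u-u_n),r_m}$, then use \eqref{eq:thm:discrete:MMMb} to subtract off $\dual{Bw_n,r_m}=0$ for any $w_n\in\mbbU_n$, yielding $\|r_m\|_\mbbV^2=\dual{B(u-w_n),r_m}\le M_B\|u-w_n\|_\mbbU\|r_m\|_\mbbV$; dividing through and taking the infimum closes the argument. There is no genuine obstacle here, as all three estimates reduce to two standard ingredients (the bounded-below property of $B$ and Fortin orthogonality); the only subtle point worth emphasizing is that the split of $v$ into $\Pi v$ and $v-\Pi v$ is precisely what separates the computable residual contribution $C_\Pi\|r_m\|_\mbbV$ from the non-computable data oscillation $\osc(f)$.
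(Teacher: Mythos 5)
Your proposal is correct and follows essentially the same route as the paper: bound $\gamma_B\|u-u_n\|_\mbbU$ by the dual norm of $B(u-u_n)$, split the supremizer as $v=\Pi v+(v-\Pi v)$, control the $\Pi v$ part by $C_\Pi\|r_m\|_\mbbV$ via the first mixed equation and the $v-\Pi v$ part by $\osc(f)$ via Fortin orthogonality, and estimate $\osc(f)$ by inserting $Bw_n$. The only (harmless) difference is in \eqref{eq:aposteriori2}: you test the first equation with $v_m=r_m$ and use the orthogonality \eqref{eq:thm:discrete:MMMb} directly, while the paper cites the same computation through the discrete residual-minimization equivalence of Theorem~\ref{teo:discrete_residual}; both amount to the identical argument.
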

\begin{remark}[Lower bounds]\label{rem:aposteriori_Hilbert}
Observe that~\eqref{eq:aposteriori2} and~\eqref{eq:osc} say that:
$$\norm{r_m}_{\mbbV} \leq M_B \|u-u_n\|_\mbbU\qquad\hbox{and} 
\qquad\osc(f)\leq M_B D_\Pi \|u-u_n\|_\mbbU\,.$$ 
Hence, the a posteriori error estimate in~Theorem~\ref{thm:aposteriori} is \emph{reliable} and \emph{efficient}. This extends the result by Carstensen, Demkowicz~\& Gopalakrishnan~\cite{CarDemGopSINUM2014} and Cohen, Dahmen \& Welper~\cite[Proposition~3.2]{CohDahWelM2AN2012} for the Hilbert-space version of the method. 
\end{remark}
\begin{proof} Using that $B$ is bounded from below, and that $Bu=f$, we get:
\begin{alignat*}{2}
\|u-u_n\|_\mathbb U\leq {1\over\gamma_B}\|Bu-Bu_n\|_{\mathbb V^\ast}
 =  {1\over\gamma_B}\sup_{v\in \mathbb V}{\<f-Bu_n,v-\Pi v+\Pi v\>_{\mathbb V^*,\mathbb V}\over \|v\|_\mathbb V}.
\end{alignat*}
Next, by definition of the $\Pi$ operator (eq.~\eqref{eq:Fortin}), $Bu_n\in \mathbb V^*$ annihilates $v-\Pi v$, for all $v\in\mathbb V$. Hence, splitting the supremum we obtain:
\begin{alignat*}{2}
\begin{array}{rl} 
\|u-u_n\|_\mathbb U \leq & \displaystyle{1\over\gamma_B}\sup_{v\in \mathbb V}{\<f,v-\Pi v\>_{\mathbb V^*,\mathbb V}\over \|v\|_\mathbb V}+{1\over\gamma_B}\displaystyle\sup_{v\in \mathbb V}{\<f-Bu_n,\Pi v\>_{\mathbb V^*,\mathbb V}\over \|v\|_\mathbb V}\\\\
  \leq & \displaystyle{1\over\gamma_B}\hbox{osc}(f)+{C_\Pi\over\gamma_B}\sup_{v\in V}{\<f-Bu_n,\Pi v\>_{\mathbb V^*,\mathbb V}\over \|\Pi v\|_\mathbb V},
\end{array}
\end{alignat*}
where we used boundedness of~$\Pi$. 
To obtain~\eqref{eq:aposteriori}, we observe that:
\begin{alignat*}{2}
\sup_{v\in \mathbb V}{\<f-Bu_n,\Pi v\>_{\mathbb V^*,\mathbb V}\over \|\Pi v\|_\mathbb V} &= 
\sup_{v\in \mathbb V}{\<J_{\mbbV}(r_m),\Pi v\>_{\mathbb V^*,\mathbb V}\over \|\Pi v\|_\mathbb V}
\le \norm{J_\mbbV(r_m)}_{\mbbV^*}
= \norm{r_m}_{\mbbV}\,.
\end{alignat*}
%
%
%
\par
Next, observe that for all $w_n\in \mathbb U_n$ we have
\begin{alignat*}{2}
\osc(f)=\sup_{v\in\mathbb V}{\<f,v-\Pi v\>_{\mathbb V^*,\mbbV}\over \|v\|_\mathbb V}=\displaystyle\sup_{v\in \mathbb V}{\<f-Bw_n,v-\Pi v\>_{\mathbb V^*,\mbbV}\over \|v\|_\mathbb V}
\leq M_B D_\Pi \|u-w_n\|_\mathbb U\,.
\end{alignat*}
Finally, by the proof of Theorem~\ref{teo:discrete_residual}, part~(i)~$\Leftrightarrow$~(ii), 
\begin{alignat*}{2}
\|I_m^*(f-Bu_n)\|_{(\mbbV_m)^*}
 &= \frac{\dual{J_\mbbV(r_m),r_m}_{\mbbV^*,\mbbV}}{\norm{r_m}_\mbbV} 
 = \norm{r_m}_{\mbbV}
 \leq \|I_m^*(f-Bw_n)\|_{(\mbbV_m)^*} \,,
\end{alignat*}$\empty$
and 
\begin{alignat*}{2}
\|I_m^*(f-Bw_n)\|_{(\mbbV_m)^*}
 \leq \|Bu-Bw_n\|_{\mbbV^*}\leq M_B\|u-w_n\|_\mbbU\,.
\end{alignat*}
\end{proof}
\par
A straightforward a~priori error estimate follows naturally from the results in Theorem~\ref{thm:aposteriori}. 
\begin{corollary}[Inexact method: A priori error estimate I]
\label{cor:aPrioriErrorEst}
Under the same assumptions of Theorem~\ref{thm:aposteriori}, we have the following a priori error estimate:
%
\begin{subequations}\label{eq:aPrioriErrorEst}
\begin{alignat}{2}
\label{eq:aPrioriErrorEst_a}
 \|u-u_n\|_\mathbb U 
 &\leq \frac{1}{\gamma_B}\osc(f) 
 + \frac{C_\Pi M_B}{\gamma_B} \inf_{w_n\in\mbbU_n} \|u-w_n\|_\mbbU
\\
\label{eq:aPrioriErrorEst_b}
 &\leq 
 \frac{(D_\Pi + C_\Pi) M_B}{\gamma_B} \inf_{w_n\in\mbbU_n} \|u-w_n\|_\mbbU
 \,.
\end{alignat}
\end{subequations}
\end{corollary}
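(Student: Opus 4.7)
The plan is to combine the three bounds already established in Theorem~\ref{thm:aposteriori}. Specifically, the a~posteriori estimate~\eqref{eq:aposteriori} controls $\|u-u_n\|_\mbbU$ by a sum of $\gamma_B^{-1}\osc(f)$ and $(C_\Pi/\gamma_B)\|r_m\|_\mbbV$, while~\eqref{eq:aposteriori2} bounds the residual representer by the best-approximation error and~\eqref{eq:osc} bounds the oscillation similarly. Both \eqref{eq:aPrioriErrorEst_a} and \eqref{eq:aPrioriErrorEst_b} then follow by direct substitution.

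First, I would start from~\eqref{eq:aposteriori} and insert~\eqref{eq:aposteriori2}, namely $\|r_m\|_\mbbV \le M_B \inf_{w_n\in\mbbU_n}\|u-w_n\|_\mbbU$, to obtain
\begin{alignat*}{2}
\|u-u_n\|_\mbbU \le \frac{1}{\gamma_B}\osc(f) + \frac{C_\Pi M_B}{\gamma_B}\inf_{w_n\in\mbbU_n}\|u-w_n\|_\mbbU,
\end{alignat*}
which is precisely~\eqref{eq:aPrioriErrorEst_a}. Next, applying the oscillation bound~\eqref{eq:osc}, $\osc(f)\le M_B D_\Pi \inf_{w_n\in\mbbU_n}\|u-w_n\|_\mbbU$, to the first term and collecting the common factor $M_B\inf_{w_n\in\mbbU_n}\|u-w_n\|_\mbbU / \gamma_B$ yields~\eqref{eq:aPrioriErrorEst_b}.

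There is essentially no obstacle here, since the corollary is a straightforward consequence: all nontrivial work (the Fortin-operator manipulation giving the a~posteriori bound, the monotone-mixed structure giving $\|r_m\|_\mbbV \le M_B\|u-w_n\|_\mbbU$, and the Galerkin-type cancellation giving the oscillation bound) has already been carried out within the proof of Theorem~\ref{thm:aposteriori}. The only minor care needed is to observe that $\gamma_B^{-1}\osc(f)$ does not depend on the choice of $w_n\in \mbbU_n$, so the two terms in \eqref{eq:aPrioriErrorEst_a} can legitimately be combined under a single infimum after invoking~\eqref{eq:osc}.
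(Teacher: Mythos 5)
Your proof is correct and is exactly the argument the paper intends: the corollary is presented as a direct consequence of Theorem~\ref{thm:aposteriori}, obtained by inserting the bounds \eqref{eq:aposteriori2} and \eqref{eq:osc} into the a~posteriori estimate \eqref{eq:aposteriori}. Your remark that $\osc(f)$ is independent of $w_n$, so the infimum can be taken after combining terms, is the only point of care and you handled it properly.
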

%
%
\begin{remark}[Oscillation]
In the context of finite-element approximations, the data-oscillation term in~\eqref{eq:aPrioriErrorEst_a} can generally be expected to be of higher order than indicated by the upper bound in~\eqref{eq:aPrioriErrorEst_b}; see discussion in~\cite{CarDemGopSINUM2014}. 
\end{remark}
\begin{remark}[$\mbbV_m=\mbbV$]
\label{rem:special_cases}
Note that if $\mbbV_m=\mbbV$, then $\osc(f)=0$, $D_\Pi=0$ and $C_\Pi = 1$ (choose $\Pi=I$), so that the estimates in \eqref{eq:aposteriori} and \eqref{eq:aPrioriErrorEst} reduce to those in the semi-infinite case~\eqref{eq:cea}.
\end{remark}
\subsection{Direct a~priori error analysis of the inexact method}
\label{sec:apriori}
A direct a priori error analysis is possible for the inexact method, without going through an a posteriori error estimate. The benefit of the direct analysis is that the resulting estimate is sharper than the worst-case upper bound given in~\eqref{eq:aPrioriErrorEst_b}. 
\par
The main idea of the direct analysis is based on the sequence of inequalities (formalized below):
\begin{alignat}{2}
\label{eq:directProof}
 \norm{u-u_n}_{\mbbU} \le \norm{I-P_n} \norm{u-w_n}_\mbbU 
 \le C \norm{P_n} \norm{u-w_n}_\mbbU \qquad \forall w_n \in \mbbU_n\,,
\end{alignat}
where $I$ is the identity, $P_n$ is the projector defined below in Definition~\ref{def:nlpgp} and the norm~$\norm{\cdot}$ corresponds to the standard operator norm.
\par
To define our projector $P_n$, consider any~$u\in \mbbU$. Next, let $(r_m,u_n)\in\mbbV_m\times\mbbU_n$ be the solution of the inexact monotone mixed method~\eqref{eq:mixed_discrete} with $f=Bu\in\mbbV^*$, i.e., 
\begin{subequations}
\label{eq:mixed_discrete2}
\begin{empheq}[left=\left\{\;,right=\right.,box=]{alignat=3} 
\label{eq:mixed_discrete_a2}
& \quad \<J_\mbbV(r_m),v_m\>_{\mbbV^{\ast},\mbbV}+\<Bu_n,v_m\>_{\mbbV^{\ast},\mbbV} &&=\<Bu,v_m\>_{\mbbV^{\ast},\mbbV} 
 & \quad  & \forall v_m\in \mbbV_m\,,
\\ \label{eq:mixed_discrete_b2}
& \quad \<B^*r_m,w_n\>_{\mbbU^{\ast},\mbbU} 
&& =0  && \forall w_n\in \mbbU_n\,.
\end{empheq}
\end{subequations}
\begin{definition}[Nonlinear PG projector]\label{def:nlpgp}
Under the same conditions of Theorem~\ref{thm:discrete}, we define the (inexact) \emph{nonlinear Petrov--Galerkin projector} to be the well-defined map 
$$
P_n:  \mbbU\to\mbbU_n\quad\hbox{such that}\quad P_n(u):=u_n,
$$
with $u_n$ the second argument of the solution~$(r_m,u_n)$ of~\eqref{eq:mixed_discrete2}. 
%
%
\end{definition}
%
%
\par
The next result establishes important properties of~$P_n$, including a fundamental bound that depends on the geometric constant~$\CAO(\mbbV)\in [0,1]$ (see Definition~\ref{def:AOconstant}).%
%
\begin{proposition}[Nonlinear PG projector properties]
\label{prop:Pbound}
Under the conditions of Theorem~\ref{thm:discrete}, let $P_n:\mbbU\to\mbbU_n$ denote the nonlinear Petrov--Galerkin projector of Definition~\ref{def:nlpgp}. Then the following properties hold true:
\begin{enumerate}[(i)]
\item $P_n$ is a nontrivial projector: $0\neq P_n=P_n\circ P_n \neq I$\,.
\item $P_n$ is homogeneous: $P_n(\lambda u)=\lambda P_n(u)$, \quad $\forall u\in\mbbU$ and $\forall \lambda\in \mbbR$\,.
\item $P_n$ is bounded and
\begin{alignat}{2}
\label{eq:Pbound}
  \norm{P_n} = \sup_{u\in \mbbU} \frac{\norm{P_n(u)}_{\mbbU}}
  {\norm{u}_\mbbU}
  \le  
 \frac{C_\Pi}{\gamma_B}\big(1+\CAO(\mbbV)\big) M_B\,.
\end{alignat}
%
\item $P_n$ is distributive in the following sense:
\begin{alignat}{2}\label{eq:quasi-linear}
P_n\big(u-P_n(w)\big)=P_n(u)-P_n(w), \qquad \forall u,w\in\mbbU\,.
\end{alignat}
\item $P_n$ is a generalized orthogonal projector in the sense that 
\begin{alignat*}{2}\label{eq:PnGOP}
P_n(u)=P_n\Big( P_n(u) + \eta \, (I-P_n)(u) \Big), \qquad \text{for any } \eta \in \mbbR \text{ and any } u\in\mbbU\,.
\end{alignat*}
\end{enumerate}
%
\end{proposition}
\begin{proof}
See Section~\ref{sec:proofPbound}.
\end{proof}
Property~(iv) in Proposition~\ref{prop:Pbound} is key to establishing the first inequality in~\eqref{eq:directProof}, indeed, for any~$w_n\in \mbbU_n$, 
\begin{alignat}{2}
\label{eq:directProof2}
 \norm{u-P_n(u)}_\mbbU = \norm{u-w_n - P_n(u-w_n)}_\mbbU 
\le \norm{I-P_n} \norm{u-w_n}_\mbbU\,.
\end{alignat}
On the other hand, the second inequality in~\eqref{eq:directProof} can be established through properties (i)--(iii) and~(v) in Proposition~\ref{prop:Pbound}. Indeed, these properties correspond to the four requirements for the abstract nonlinear projector~$Q$ of Lemma~\ref{lem:I-P}. Hence we immediately obtain the following key estimate: 
%
\begin{corollary}[Nonlinear PG projector estimate]
\label{col:NPGbound}
Under the conditions of Proposition~\ref{prop:Pbound}, it holds that 
\begin{alignat*}{2}
  \norm{I-P_n} \le C_S \norm{P_n}\,,
\end{alignat*}
with~$C_S := \min \Big\{ 1 + \norm{P_n}^{-1}\,,\, \CBM(\mbbU) \Big\}$\,.
%
%
\end{corollary}
\begin{proof}
Apply Lemma~\ref{lem:I-P}.
\end{proof}
\par
In conclusion, by combining~\eqref{eq:directProof2} with Corollary~\ref{col:NPGbound} and the bound in~\eqref{eq:Pbound}, we have established the following main result.
\begin{theorem}[Inexact method: A priori error estimate II]\label{thm:discrete_apriori}
Let $\mbbU$ and $\mbbV$ be two Banach spaces and let $B:\mbbU \to\mbbV^*$ be a linear, continuous and bounded-below operator, with continuity constant $M_B>0$ and bounded-below constant $\gamma_B>0$. Assume that $\mbbV$ and $\mbbV^*$ are reflexive and strictly convex. Let $\mbbU_n\subset \mbbU$ and $\mbbV_m\subset \mbbV$ be finite-dimensional subspaces such that the (Fortin) Assumption~\ref{assumpt:Fortin} holds true.
Given $f=Bu\in\mbbV^*$, let $(r_m,u_n)\in \mathbb V_m\times \mathbb U_n$ be the unique solution of the discrete mixed problem:
$$
\left\{
\begin{array}{lll}
 \<J_\mbbV(r_m),v_m\>_{\mbbV^{\ast},\mbbV}+\<Bu_n,v_m\>_{\mbbV^{\ast},\mbbV} & =\<f,v_m\>_{\mbbV^{\ast},\mbbV} \,,
 & \quad \forall v_m\in \mbbV_m\,,
\\ 
\<B^*r_m,w_n\>_{\mbbU^{\ast},\mbbU} 
& =0\,, & \quad\forall w_n\in \mbbU_n\,.
\end{array}
\right.$$
Then $u_n$ satisfies the a priori error estimate:
\begin{alignat*}{2}
  \norm{u-u_n}_{\mbbU} 
  \le C
  \inf_{w_n\in \mbbU_n}   \norm{u-w_n}_\mbbU
\end{alignat*}
with 
\begin{alignat*}{2}
 C =  \min\left\{ 
  \frac{C_\Pi}{\gamma_B}\,\big(1+\CAO(\mbbV)\big)\,M_B \, \CBM(\mbbU) \,,\,1+ \frac{C_\Pi }{\gamma_B}\big(1+\CAO(\mbbV)\big) M_B  
  \right\}\,.
\end{alignat*}
(See Assumption~\ref{assumpt:Fortin} for the definition of $C_\Pi$; Definition~\ref{def:Banach-Mazur} for $\CBM(\mbbU)$; and Definition~\ref{def:AOconstant} for $\CAO(\mbbV)$.)
\end{theorem}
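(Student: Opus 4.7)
The plan is to exploit the two nonlinear projector results that have just been established, namely Proposition~\ref{prop:Pbound} (the bound on $\norm{P_n}$) and Lemma~\ref{lem:Pn} (which collects the projector properties and yields the bound on $\norm{I-P_n}$). The essential observation is that the inexact method can be rewritten in operator form as $u_n=P_n(u)$, so that the error $u-u_n$ is precisely $(I-P_n)(u)$; all that remains is to compare this with the distance to $\mbbU_n$.

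First, recall that $P_n:\mbbU\rightarrow\mbbU_n$ is defined (Definition~\ref{def:nlpgp}) as $P_n(u):=u_n$, where $u_n$ is the second component of the solution pair to the discrete mixed problem \eqref{eq:mixed_discrete2} with data $f=Bu$. Thus, given any $w_n\in\mbbU_n$, I will apply identity~\eqref{eq:I-P_n} from Lemma~\ref{lem:Pn}, which reads $(I-P_n)(u)=(I-P_n)(u-w_n)$, to obtain
\begin{alignat*}{2}
 \norm{u-u_n}_\mbbU = \bignorm{(I-P_n)(u-w_n)}_\mbbU \le \norm{I-P_n}\,\norm{u-w_n}_\mbbU\,.
\end{alignat*}
Taking the infimum over $w_n\in\mbbU_n$ on the right-hand side yields the quasi-optimal bound
\begin{alignat*}{2}
 \norm{u-u_n}_\mbbU \le \norm{I-P_n}\,\inf_{w_n\in\mbbU_n}\norm{u-w_n}_\mbbU\,.
\end{alignat*}

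Finally, I will invoke the upper bound~\eqref{eq:upperboundI-P} of Lemma~\ref{lem:Pn}, which states precisely
\begin{alignat*}{2}
 \norm{I-P_n}\le \min\left\{\frac{C_\Pi}{\gamma_B}\bigl(1+\CAO(\mbbV)\bigr)M_B\,\CBM(\mbbU)\,,\, 1+\frac{C_\Pi}{\gamma_B}\bigl(1+\CAO(\mbbV)\bigr)M_B\right\}\,,
\end{alignat*}
to identify this quantity with the constant $C$ in the statement of the theorem. This completes the argument.

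Since all the technical work has already been carried out in Proposition~\ref{prop:Pbound} (norm bound on $P_n$ using the asymmetric-orthogonality constant) and in Lemma~\ref{lem:Pn} (verifying hypotheses (i)--(iv) of Lemma~\ref{lem:I-P} and the consequent estimate via the extension of Stern's Banach--Mazur-type constant $C_\mathrm{S}$), there is no real obstacle in this final assembly step; the proof is essentially bookkeeping. The main conceptual hurdle of the entire analysis, namely extending the Hilbert-space identity $\norm{I-P}=\norm{P}$ of Kato/Xu--Zikatanov to the \emph{nonlinear} Petrov--Galerkin projector, has already been overcome in Lemma~\ref{lem:I-P} and Lemma~\ref{lem:Pn}.
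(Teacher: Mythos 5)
Your proposal is correct and follows essentially the same route as the paper's own proof: write $u-u_n=(I-P_n)(u)=(I-P_n)(u-w_n)$ via identity~\eqref{eq:I-P_n} of Lemma~\ref{lem:Pn}, bound by $\norm{I-P_n}\,\norm{u-w_n}_\mbbU$, take the infimum over $w_n\in\mbbU_n$, and conclude with the bound~\eqref{eq:upperboundI-P}. Nothing is missing.
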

\begin{remark}[Hilbert-space case]
\label{rem:Hilbert_case}
If $\mbbU$ and $\mbbV$ are Hilbert spaces, then $\CBM=1$ and $\CAO(\mbbV) = 0$, hence $C = C_\Pi M_B / \gamma_B$ in the a~priori error estimate of Theorem~\ref{thm:discrete_apriori}. This coincides with the known result in the Hilbert-space setting~\cite{GopQiuMOC2014}; see Section~\ref{sec:connectHilbert} for further details on the connection to the method in Hilbert spaces.~
\end{remark}
\begin{corollary}[Vanishing discrete residual]
\label{cor:rm=0}
If $r_m=0$, the a~priori error estimate in Theorem~\ref{thm:discrete_apriori} reduces to:
\begin{alignat*}{2}
  \norm{u-u_n}_{\mbbU} \le 
\min \bigg\{ \frac{C_\Pi M_B}{\gamma_B} \CBM(\mbbU)
  \,,\, 1 + \frac{C_\Pi M_B}{\gamma_B}\bigg\}
  \inf_{w_n\in \mbbU_n}   \norm{u-w_n}_\mbbU\,.
\end{alignat*}
\end{corollary}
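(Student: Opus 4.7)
The plan is to follow the argument of Theorem~\ref{thm:discrete_apriori} but replace the global bound on $\norm{P_n}$ from Proposition~\ref{prop:Pbound} (which is the source of the $(1+\CAO(\mbbV))$ factor) by a two-dimensional localization. The enabling fact is the translation-invariance of the discrete residual: a direct substitution shows that $(r_m,\, u_n - w_n)$ solves~\eqref{eq:mixed_discrete2} with data $B(u - w_n)$, so by uniqueness (Theorem~\ref{thm:discrete}) we have $P_n(u - w_n) = u_n - w_n$ and the associated discrete residual remains $r_m$. Combined with the hypothesis $r_m = 0$ (equivalently, $B(u - u_n) \in \mbbV_m^\bot$), either of the two proofs of Proposition~\ref{prop:Pbound} applied pointwise to $u - w_n$ now sheds the $\CAO(\mbbV)$-correction and yields
\[
\norm{P_n(u - w_n)}_\mbbU \le \frac{C_\Pi M_B}{\gamma_B}\,\norm{u - w_n}_\mbbU, \qquad \forall\, w_n \in \mbbU_n.
\]

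Assume the nontrivial case $u \notin \mbbU_n$ and fix $w_n \in \mbbU_n$. Consider the two-dimensional subspace $W := \mathrm{span}\{u_n - w_n,\, u - u_n\} \subset \mbbU$. By the same substitution argument (now using $B(u-u_n) \in \mbbV_m^\bot$), one verifies that the pair $(0,\,\alpha(u_n - w_n))$ solves~\eqref{eq:mixed_discrete2} with data $B(\alpha(u_n - w_n) + \beta(u - u_n))$, so by uniqueness $P_n$ restricted to $W$ is the linear oblique projector
\[
P_n|_W\bigl(\alpha(u_n - w_n) + \beta(u - u_n)\bigr) = \alpha(u_n - w_n),
\]
and the associated residual is zero for \emph{every} element of $W$. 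Repeating the argument of the first paragraph on each $y \in W$ therefore yields $\bigl\|P_n|_W\bigr\| \le C_\Pi M_B/\gamma_B$, where $\|\cdot\|$ denotes the operator norm on $W$.

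Since $P_n|_W$ is linear it trivially satisfies hypotheses (i)--(iv) of Lemma~\ref{lem:I-P} applied in the two-dimensional Banach space $W$ (endowed with the norm inherited from $\mbbU$). Using $\CBM(W) \le \CBM(\mbbU)$, the lemma combined with the bound above yields
\[
\bigl\|I - P_n|_W\bigr\| \,\le\, \min\Big\{1 + \tfrac{C_\Pi M_B}{\gamma_B},\, \CBM(\mbbU)\,\tfrac{C_\Pi M_B}{\gamma_B}\Big\}.
\]
Evaluating at $u - w_n = (u_n - w_n) + (u - u_n) \in W$, and noting that $(I - P_n|_W)(u - w_n) = u - u_n$, converts this operator bound into the pointwise estimate $\norm{u - u_n}_\mbbU \le \min\{\ldots\}\,\norm{u - w_n}_\mbbU$; infimizing over $w_n \in \mbbU_n$ delivers the stated bound. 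The crucial technical step is the linearization of $P_n|_W$, which rests on the hypothesis $r_m = 0$ propagating to $B(u-u_n) \in \mbbV_m^\bot$ and thereby making the two-dimensional span $W$ invariant under $P_n$ in the right way.
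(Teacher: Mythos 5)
Your proof is correct, and it rests on the same core ingredients as the paper's: the simplified projector bound obtained from~\eqref{eq:Pn(u)_bound}--\eqref{eq:Pn(u)_bound2} when the discrete residual vanishes, the Stern-type Lemma~\ref{lem:I-P}, and the translation identity $P_n(u-w_n)=u_n-w_n$ from Lemma~\ref{lem:Pn}. Where you differ is in how Lemma~\ref{lem:I-P} is brought to bear: the paper invokes it through Lemma~\ref{lem:Pn} and simply asserts that the bound~\eqref{eq:upperboundI-P} ``can be verified'' to simplify when $r_m=0$, which read literally as a global operator-norm statement needs the residual-vanishing property to hold at exactly the auxiliary points where $\norm{P_n}$ is used; you instead observe that $r_m=0$ means $B(u-u_n)$ annihilates $\mbbV_m$, so that on $W=\operatorname{Span}\{u_n-w_n,\,u-u_n\}$ every element has zero discrete residual and $P_n|_W$ is a genuine \emph{linear} rank-one projector with $\norm{P_n|_W}\le C_\Pi M_B/\gamma_B$, to which Lemma~\ref{lem:I-P} applies on $W$ with $\CBM(W)\le\CBM(\mbbU)$ and no $\CAO(\mbbV)$-correction. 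This localization is a cleaner and fully rigorous justification of the step the paper leaves implicit, and it makes transparent why the $(1+\CAO(\mbbV))$ factor disappears. The only loose end is the degenerate case $w_n=u_n$ (then $W$ is one-dimensional and hypothesis~(ii) of Lemma~\ref{lem:I-P} fails), but the desired inequality is then trivial because the constant is at least~$1$: testing $\norm{Bw_n}_{\mbbV^*}$ against $\Pi v$ via~\eqref{eq:Fortin_c} gives $\gamma_B\le C_\Pi M_B$, while $\CBM(\mbbU)\ge 1$; it would be worth one sentence to dispose of this case explicitly.
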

\begin{proof}
If $r_m=0$, Eq.~\eqref{eq:Pn(u)_bound2} in the proof of Proposition~\ref{prop:Pbound} implies the simpler bound:
$$
\|P_n(u)\|_\mbbU\leq {C_\Pi\over \gamma_B}M_B \|u\|_\mbbU\,.
$$
Combining~\eqref{eq:directProof2} with Corollary~\ref{col:NPGbound} and this bound, gives the desired result.
%
%
%
\end{proof}
One particular situation for which~$r_m = 0$ occurs, is when discrete dimensions are matched: $\dim\mbbV_m=\dim\mbbU_n$. In that case, one recovers actually the standard Petrov--Galerkin method; see Section~\ref{sec:connectStdPG} for an elaboration on this connection. 
%
%

%

\subsection{Proof of Proposition~\ref{prop:Pbound}}
\label{sec:proofPbound}
In this section we proof Proposition~\ref{prop:Pbound}. We proceed item by item.
\begin{enumerate}[(i)]
\item Take $u\in\mbbU$ and plug $u_n=P_n(u)$ in the right-hand side of~\eqref{eq:mixed_discrete_a2}. Then the unique solution of the mixed system~\eqref{eq:mixed_discrete2} will be $(0,u_n)$. Therefore $P_n(P_n(u))= P_n(u_n) = u_n$. The fact that $P_n\neq 0$ and $P_n\neq I$ is easy to verify whenever $\mbbU_n\neq \{0\}$ and $\mbbU_n\neq \mbbU$.
\item The result follows by multiplying both equations of the mixed system~\eqref{eq:mixed_discrete2} by $\lambda\in\mathbb R$ and using the homogeneity of the duality map
(see Proposition~\ref{prop:duality}).
\item Consider any~$u\in \mbbU$ and let $(r_m,u_n)\in \mbbV_m\times \mbbU_n$ denote the solution to~\eqref{eq:mixed_discrete2}. 
%
Observe that 
\begin{equation}
\label{eq:Pn(u)_bounddd}
\|P_n(u)\|_\mbbU=\|u_n\|_\mbbU\le \frac{1}{\gamma_B}\sup_{v\in \mbbV}\frac{\dual{B u_n,v}_{\mbbV^*,\mbbV} }{\norm{v}_\mbbV}
  \le \frac{C_\Pi}{\gamma_B}\sup_{v\in \mbbV}\frac{\dual{B u_n,\Pi v}_{\mbbV^*,\mbbV} }{\norm{
  \Pi v}_\mbbV}\,.
\end{equation}
Let $y_m=I_m J^{-1}_{\mbbV_m}(I^*_mBu_n)$ and note that $y_m\in\mbbV_m \subset\mbbV$ is the supremizer of the last expression in~\eqref{eq:Pn(u)_bounddd}. Hence, using~\eqref{eq:mixed_discrete_a2} we get
\begin{alignat}{2}
\notag
\|P_n(u)\|_\mbbU 
&\leq
\frac{C_\Pi}{\gamma_B}
  \frac{\dual{B u_n,y_m}_{\mbbV^*,\mbbV} }{\norm{
  y_m}_\mbbV}
\\
\label{eq:Pn(u)_bound2}
& =\frac{C_\Pi}{\gamma_B}
\left(
  {\dual{Bu,y_m}_{\mbbV^*,\mbbV}\over\|y_m\|_\mbbV}
  -{\dual{J_\mbbV(r_m),y_m}_{\mbbV^*,\mbbV}\over\|r_m\|_\mbbV\|y_m\|_\mbbV}\|r_m\|_\mbbV
  \right)
  \,.
\end{alignat}
The first term in the parentheses above is clearly bounded by 
$\norm{B u}_{\mbbV^*}$. To bound the second term first observe that
\begin{alignat*}{2}
\dual{J_\mbbV(y_m),r_m}_{\mbbV^*,\mbbV}
= \dual{Bu_n,r_m}_{\mbbV^*,\mbbV}=0\,,
\end{alignat*}
where we used~\eqref{eq:mixed_discrete_b2}. Thus,
$(r_m,y_m)\in\mathcal O_\mbbV$ (see Lemma~\ref{lem:Lambda_properties} (i)) which implies that the second term is bounded by $\CAO(\mbbV)\|r_m\|_\mbbV\,$.  Using~\eqref{eq:mixed_discrete2}, note that
$$
\|r_m\|_\mbbV={\dual{J_\mbbV(r_m),r_m}_{\mbbV^*,\mbbV}
\over\|r_m\|_\mbbV}={\dual{Bu-Bu_n,r_m}_{\mbbV^*,\mbbV}
\over\|r_m\|_\mbbV}={\dual{Bu,r_m}_{\mbbV^*,\mbbV}
\over\|r_m\|_\mbbV}\leq \|Bu\|_{\mbbV^*}\,.
$$ 
In conclusion, 
\begin{alignat}{2}
\label{eq:Pbound2}
\|P_n(u)\|_\mbbU \leq 
\frac{C_\Pi}{\gamma_B}\big(1+\CAO(\mbbV)\big)\|Bu\|_{\mbbV^*}\,,
\qquad\forall u\in\mbbU
\end{alignat}
and we get the desired result upon using~$\norm{Bu}_{\mbbV^*} \le M_B \norm{u}_\mbbU$.
\par 
We note that an alternative proof can be given based on the second a priori bound for the best approximation (Proposition~\ref{prop:improvedApriori})
and Lemma~\ref{lem:Lambda_properties}(ii). 
\item Let $(r_m,u_n)$ be the solution of the mixed system~\eqref{eq:mixed_discrete2} and for some $\widetilde w\in\mbbU$, let $\widetilde w_n=P_n(\widetilde w) \in \mbbU_n$. By subtracting $\dual{B\widetilde w_n,v_m}_{\mbbV^*,\mbbV}$ on both sides of the identity in~\eqref{eq:mixed_discrete_a2}, we get that $(r_m,u_n-\widetilde w_n)$ is the unique solution of~\eqref{eq:mixed_discrete2} with right-hand side ${\dual{B(u-\widetilde w_n),v_m}_{\mbbV^*,\mbbV}}$. Therefore ${P(u-\widetilde w_n)=u_n-\widetilde w_n}$.
\item Statement~(v) follows from statements~(ii) and~(iv), indeed, for any $\eta\in\mbbR$, 
\begin{alignat}{2}
P_n\Big(P_n(u)+\eta\,\big(u-P_n(u)\big)\Big) 
&= P_n\Big(\eta u +P_n\big((1-\eta)u\big)\Big)
\tag{by~(ii)}\\
&=  P_n(\eta u) + P_n\big((1-\eta)u\big)\tag{by~(iv)}\\
&=  P_n(u)\,.\tag{by~(ii)}
\end{alignat}
\end{enumerate}
~\hfill%
$\ensuremath{_\blacksquare}$

%
\section{Connection to other theories} 
\label{sec:connections}
In this last section, we elaborate on how the presented quasi-optimality analysis in Section~\ref{sec:practical} generalizes existing theories for other methods. In doing so, we collect some of our earlier observations and provide a coherent summary of of the connections.
\par
Figure~\ref{fig:connections} presents a schematic hierarchy with the connections among the methods and the constants~$C$ in their respective quasi-optimality bound:
\begin{alignat}{2}
\label{eq:conn:qopt}
  \norm{u-u_n}_\mbbU \le C \inf_{w_n \in \mbbU_n} \norm{u-w_n}_\mbbU\,.
\end{alignat}
At the top of the figure is the inexact residual minimization (\textsf{iRM}) method, or equivalently, the inexact nonlinear Petrov--Galerkin (\textsf{iNPG}) or monotone mixed method (\textsf{MMM}). By considering certain special cases, quasi-optimality constants are recovered for Petrov-Galerkin (\textsf{PG}) methods, \emph{exact} residual minimization (\textsf{RM}), and inexact residual minimization in \emph{Hilbert} spaces (\textsf{iRM-H}), which includes the DPG method. Naturally, in these connections, the conditions of Theorem~\ref{thm:discrete} (discrete well-posedness of the inexact method) are assumed to hold.
%
%
%
\begin{figure}[!t]
\newcommand{\labelFill}{black!40!white}
\newcommand{\mthd}[1]{\textsf{\textcolor{white}{#1}}}
%
\newcommand{\CboxLabel}[1]{\rotatebox{90}{{\parbox[t]{\widthof{\footnotesize\mthd{oPG-H}}}{\centering\mbox{\footnotesize{\mthd{#1}}}}}}}
\newcommand{\CboxText}[1]{{{\begin{minipage}[c][\widthof{\footnotesize\textsf{oPG-H}}]{3cm}\centering\small #1 \end{minipage}}}}
\newcommand{\Cbox}[2]{\nodepart[text width=0.5cm]{one}\CboxLabel{#1}
                      \nodepart[text width=3cm]{two}\CboxText{#2}}
\newcommand{\ArrowLabel}[1]{\footnotesize #1}
\newcommand{\BottomLabel}[1]{\footnotesize\textrm{#1}}
\tikzset{BottomLabel/.style = {anchor=mid, text height = 3ex, text width = 3.5cm, inner xsep=0, align=center}}
\tikzset{Cbox/.style = {rectangle split, rectangle split horizontal, rounded corners, rectangle split parts=2, draw, rectangle split part fill={\labelFill,white}, inner sep=0, align=center}}
\tikzset{FillRoundedNorth fill/.style={append after command={
   \pgfextra
        \draw[sharp corners, fill=#1]%
    (\tikzlastnode.west)%
    [rounded corners] |- (\tikzlastnode.north)%
    [rounded corners] -| (\tikzlastnode.east)%
    [rounded corners=0pt] |- (\tikzlastnode.south)%
    [rounded corners=0pt] -| (\tikzlastnode.west);
   \endpgfextra}}}
\tikzset{FillRoundedSouth fill/.style={append after command={
   \pgfextra
        \draw[sharp corners, fill=#1]%
    (\tikzlastnode.west)%
    [rounded corners=0pt] |- (\tikzlastnode.north)%
    [rounded corners=0pt] -| (\tikzlastnode.east)%
    [rounded corners] |- (\tikzlastnode.south)%
    [rounded corners] -| (\tikzlastnode.west);
   \endpgfextra}}}
\centering
\begin{tikzpicture}[auto, scale=1, >=stealth]
  \draw (10,0.125) node(COR) [FillRoundedSouth fill=white, BottomLabel] 
        {\BottomLabel{Corollary~\ref{cor:aPrioriErrorEst}}};
  \draw ( 5,0.125) node(REM) [FillRoundedSouth fill=white, BottomLabel] 
        {\BottomLabel{Corollary~\ref{cor:rm=0}}};
  \draw ( 0,0.125) node(THM) [FillRoundedSouth fill=white, BottomLabel] 
        {\BottomLabel{\phantom{y}Theorem~\ref{thm:discrete_apriori}\phantom{y}}};
  \draw (5,.8) node(iRM) 
        [FillRoundedNorth fill=\labelFill, 
         text width=13.5cm, minimum height = 3.5ex, text centered,
         inner sep=0]
        {\small \makebox[4em]{\hfill \mthd{iRM}} ~\mthd{$\Leftrightarrow$}~ 
         \mthd{iNPG} ~\mthd{$\Leftrightarrow$}~ \makebox[4em]{\mthd{MMM} \hfill}};
  \draw (0,-2) node(iRMH) [Cbox] 
        {\Cbox{iRM-H}{$\ds{C = \frac{C_\Pi M}{\gamma}}$}};
  \draw (5,-2) node(PG) [Cbox] 
        {\Cbox{PG}{$C = \min$\hspace*{3.125em}
          \\ \hspace*{.125em}
          $\Big\{  \tfrac{M}{\widehat{\gamma} }\CBM
                 ,1{+}\tfrac{M}{\widehat{\gamma}} 
           \Big\}$}%
        };
  \draw (5,-4.5) node(PGH) [Cbox] 
        {\Cbox{PG-H}{$\ds{C = \frac{M}{\widehat{\gamma}}}$}};
  \draw (5,-7) node(oPGH) [Cbox] 
        {\Cbox{oPG-H}{$\ds{C = \frac{M}{\gamma}}$}};
  \draw (10,-2) node(RM) [Cbox] 
        {\Cbox{RM}{$\ds{C = \frac{M}{\gamma}}$}};
  \draw [->,thick] (THM) to node 
        {\ArrowLabel{$\mbbU,\mbbV$ Hilbert}} (iRMH);
  \draw [->,thick] (COR) to node [near start]
        {\ArrowLabel{\parbox[t]{\widthof{$\mbbV_m = \mbbV$}}{\flushleft 
                     \mbox{$\mbbV_m = \mbbV$}
                     \\ \mbox{(or $r\in \mbbV_m$)$\negqquad$}}%
                    }} (RM);
  \draw [->,thick] (REM) to node 
        {\ArrowLabel{$\dim\mbbU_n = \dim\mbbV_m$}} (PG);
  \draw [->,thick] (iRMH.south) to [out=285, in=180] node [near start] 
        {\ArrowLabel{$\dim\mbbU_n = \dim\mbbV_m$}} (PGH.west);
  \draw [->,thick] (PG)  to node 
        {\ArrowLabel{$\mbbU,\mbbV$ Hilbert}} (PGH);
  \draw [->,thick] (PGH)  to node 
        {\ArrowLabel{$\mbbV_m = R_{\mbbV}^{-1}B\mbbU_n$ }} (oPGH);
  \draw [->,thick] (iRMH.south) to [out=270, in=180] node [swap] 
        {\ArrowLabel{\parbox[t]{\widthof{$\mbbV_m = \mbbV$}}{\flushleft 
                     \mbox{$\mbbV_m = \mbbV$}
                     \\ \mbox{(or $r\in \mbbV_m$)$\negqquad$}}%
                    }} (oPGH.west);
  \draw [->,thick] (RM)  to [out=270, in=0] node 
        {\ArrowLabel{$\mbbU,\mbbV$ Hilbert}} (oPGH);
\end{tikzpicture}
\caption{Hierarchy of discretization methods and their quasi-optimality result; see Sections~\ref{sec:connectStdPG}--\ref{sec:connectHilbert} for a detailed explanation. To lighten the notation, $\gamma \equiv \gamma_B$, $M \equiv M_B$ and $\CBM \equiv \CBM(\mbbU)$.}
\label{fig:connections}
\end{figure}
%
\subsection{Petrov--Galerkin methods}
\label{sec:connectStdPG}
The standard Petrov--Galerkin method (\textsf{PG} in Figure~\ref{fig:connections}) is obtained when $\dim \mbbV_m=\dim \mbbU_n$. Indeed, under this stipulation, the (Fortin) Assumption~\ref{assumpt:Fortin} implies the well-known \emph{discrete inf-sup condition}
(see, e.g., \cite{ErnGueBOOK2004,ErnGueCR2016})
with \emph{discrete inf-sup constant} $\widehat{\gamma}=\gamma_B/C_\Pi\,$, where $\gamma_B$ is the bounded-below constant of~$B$ and $C_\Pi$ the boundedness constant of the Fortin operator~$\Pi$. Therefore, $(r_m,w_n)\mapsto \dual{B^* r_m,w_n}$ corresponds to an invertible square system, so that~\eqref{eq:mixed_discrete_b} implies $r_m=0$, while~\eqref{eq:mixed_discrete_a} reduces to the standard (linear) Petrov--Galerkin form:
\begin{alignat}{2}
\label{eq:connect:PG}
\dual{ Bu_n,v_m }_{\mbbV^*,\mbbV}=\dual{f,v_m }_{\mbbV^*,\mbbV},\qquad\forall v_m\in \mbbV_m\,.
\end{alignat}
\par
The quasi-optimality result of Corollary~\ref{cor:rm=0} applies in this situation (since~$r_m = 0$), resulting in the constant
\begin{alignat*}{2}
  C = \min \bigg\{ {M_B\over\widehat{\gamma}} \CBM(\mbbU)\,,\,1+{M_B\over\widehat{\gamma}}\bigg\}\,.
\end{alignat*}
This coincides with the recent result obtained by Stern~\cite{SteNM2015}. Historically, the first quasi-optimality analysis for the PG~method was carried out in the pioneering work of Babu\v{s}ka~\cite{BabNM1971} who obtained the classical result~$C = 1 + M_B / \widehat{\gamma}$.
\par
Furthermore, if $\mbbU$ is a Hilbert space, then $\CBM(\mbbU)=1$. Hence, when $\mbbU$ and $\mbbV$ are Hilbert spaces, the constant reduces to $C = M_B/\widehat{\gamma}$, which is the established result for PG methods in Hilbert spaces (\textsf{PG-H}); see Xu~\& Zikatanov~\cite{XuZikNM2003}. Furthermore, when the discrete test space~$\mbbV_m$ equals~$R_V^{-1} B \mbbU_n$, one obtains the \emph{optimal} PG method in Hilbert spaces (\textsf{oPG-H}). In that case~$\widehat{\gamma} =\gamma$, and one recovers the result~$C = M_B/\gamma_B$ as obtained by Demkowicz~\& Gopalakrishnan~\cite{DemGopNMPDE2011}. We note that $M_B/\gamma_B$ can be made equal to~$1$, by suitably re-norming~$\mbbU$ or~$\mbbV$~\cite{ZitMugDemGopParCalJCP2011, DahHuaSchWelSINUM2012}.
%
%
%
%
%
%
\subsection{Residual minimization}
\label{sec:conn:resmin}
The \emph{exact} residual minimization method~\eqref{eq:introResMin} (\textsf{RM} in Figure~\ref{fig:connections}) is obviously recovered when $\mbbV_m=\mbbV$ in~\eqref{eq:introInexactResMin}. 
%
%
To demonstrate that the corresponding quasi-optimality result is also recovered, 
note that when $\mbbV_m=\mbbV$, the (Fortin) Assumption~\ref{assumpt:Fortin} is straightforwardly satisfied by taking $\Pi=I$ (the identity). Then, $C_\Pi=1$ and $\osc(f)$ as defined in~\eqref{eq:osc} vanishes, which reduces the a~priori error estimate in Corollary~\ref{cor:aPrioriErrorEst} to~\eqref{eq:conn:qopt} with~$C = M_B / \gamma_B$.
%
This result indeed coincides with the one for exact residual minimization (see~\eqref{eq:cea}) as originally obtained by Guermond~\cite{GueSINUM2004}.
%
Furthermore, the a~priori bound~\eqref{eq:alternative_apriori_bound} can also be recovered, upon substituting $C_\Pi = 1$ in~\eqref{eq:Pbound2} and taking into account that $\CAO(\mbbV^*)=\CAO(\mbbV)$ (see Lemma~\ref{lem:Lambda_properties}).
%
\par
As an alternative to the case~$\mbbV_m = \mbbV$, the exact residual minimizer $u_n=\arg \min_{w_n\in\mbbU_n}\|f-Bw_n\|_{\mbbV^*}$ is also obtained when the (continuous) residual representer happens to be in~$\mbbV_m$, i.e., 
\begin{alignat}{2} 
\label{eq:conn:r}
  r := J_\mbbV^{-1}(f-B u_n) \in\mbbV_m\,.
\end{alignat}
See Proposition~\ref{prop:r_in_V} for the equivalence in this special situation. 
%
\par
Interestingly, when $\mbbU$ and $\mbbV$ are Hilbert spaces, the quasi-optimality constant~$C = M_B / \gamma_B$ for exact residual minimization remains the same as in the Banach-space case. This is consistent with the fact that the resulting method is equivalent to the optimal Petrov--Galerkin method in Hilbert spaces (\textsf{oPG-H}), as discussed in Section~\ref{sec:MainResult}; see~\eqref{eq:introOptPG}. 
%
\subsection{Inexact method in Hilbert spaces}
\label{sec:connectHilbert}
The most important fact of the inexact residual minimization method in \emph{Hilbert} spaces (\mbox{\textsf{iRM-H} in Figure~\ref{fig:connections}}) is that it is a \emph{linear} method. Indeed, in this case the duality map $J_\mbbV$ is the Riesz map $R_\mbbV$, hence 
\begin{alignat*}{2}
\dual{J_\mbbV(r_m),v_m}_{\mbbV^*,\mbbV} 
 = \dual{R_\mbbV r_m ,v_m}_{\mbbV^*,\mbbV} 
 = \innerprod{r_m,v_m}_{\mbbV}\,,
\end{alignat*}
where $(\cdot,\cdot)_\mbbV$ denotes the inner product in~$\mbbV$. Therefore, the monotone mixed method~\eqref{eq:mixed_discrete} reduces to:
\begin{subequations}
\begin{empheq}[left=\left\{\;,right=\right.,box=]{alignat*=3} 
 \innerprod{ r_m ,v_m }_{\mbbV}
 +\<Bu_n,v_m\>_{\mbbV^{\ast},\mbbV} 
 &=\<f,v_m\>_{\mbbV^{\ast},\mbbV} \,,
 & \quad & \forall v_m\in \mbbV_m\,,
\\ 
\<B^*r_m,w_n\>_{\mbbU^{\ast},\mbbU} 
 &=0\,, 
 & \quad & \forall w_n\in \mbbU_n\,,
\end{empheq}
\end{subequations}
which is equal to the mixed form of the DPG method~\cite{DemGopBOOK-CH2014} as well as the inexact optimal Petrov--Galerkin method in Hilbert spaces~\cite{CohDahWelM2AN2012}. 
%
In the Hilbert case, $\CBM(\mbbU)=1$ and $\CAO(\mbbV)=0$, so that the quasi-optimality constant in Theorem~\ref{thm:discrete_apriori} reduces to~$C = C_\Pi M_B / \gamma_B$. This coincides with the Hilbert-space result due to Gopalakrishnan and Qiu~\cite{GopQiuMOC2014}. 
\par
Furthermore, if $\dim \mbbV_m = \dim \mbbU_n$, by the same reasoning as in Section~\ref{sec:connectStdPG}, one obtains a Petrov--Galerkin method in Hilbert spaces (\textsf{PG-H}). The quasi-optimality constant reduces to $C = M_B / \widehat{\gamma}$, since the discrete inf-sup constant~$\widehat{\gamma}$ can be taken as~$C_\Pi / \gamma$. On the other hand, if $\mbbV_m = \mbbV$ (or \eqref{eq:conn:r} is valid), by the same reasoning as in Section~\ref{sec:conn:resmin}, one obtains \emph{exact} residual minimization in Hilbert spaces, which in turn is equivalent to the optimal Petrov--Galerkin method in Hilbert spaces (\textsf{oPG-H}). In that case~$C_\Pi = 1$, so that the quasi-optimality constant reduces to the expected result~$C = M_B/\gamma_B$.
%
%
%
%

\section*{Acknowledgements}
\addcontentsline{toc}{section}{Acknowledgements}
IM and KvdZ are grateful to Leszek Demkowicz for his early encouragement to investigate a Banach-space theory of DPG, and to Jay Gopalakrishnan for insightful conversations. KvdZ is also thankful to Michael Holst and Sarah Pollock for initial discussions on the topic, and to Weifeng Qiu, Paul Houston and Sarah Roggendorf for additional discussions. 
\par
The work by IM was done in the framework of Chilean FONDECYT research project~\#1160774. IM was also partially supported by European Union’s Horizon 2020 research and innovation program under the Marie Sklodowska-Curie grant agreements No~644202 and No~777778. KvdZ is grateful to the support provided by the Royal Society International Exchanges Scheme / Kan Tong Po Visiting Fellowship Programme, and by the above-mentioned FONDECYT project.
\small
\bibliography{BibFile_OAP}
\bibliographystyle{siam}
\end{document}
%